\DeclareMathAlphabet{\mathpzc}{OT1}{pzc}{m}{it}
\newcommand{\abssec}[1]{\noindent\normalsize {\bfseries #1\quad }\ignorespaces}
\renewenvironment{abstract}{\abssec{Abstract}}{\par\vspace{.1in}}
\newenvironment{keywords}{\abssec{Key Words}}{\par\vspace{.1in}}
\newenvironment{AMSMOS}{\abssec{AMS subject
  classification}}{\par\vspace{.1in}}
\newtheorem{theorem}{Theorem}[section]
\newtheorem{proposition}[theorem]{Proposition}
\newtheorem{lemma}[theorem]{Lemma}
\newtheorem{definition}[theorem]{Definition}
\newtheorem{remark}[theorem]{Remark}
\newtheorem{assumption}[theorem]{Assumption}
\numberwithin{equation}{section}
\numberwithin{theorem}{section}
\newcommand\weak{\stackrel{\mathclap{\normalfont \ast}}{\rightharpoonup}}
\def\RR{{\mathbb{R}}}
\def\NN{{\mathbb{N}}}
\def\Om{\Omega}
\def\bOm{\overline{\Om}}
\def\pOm{\partial\Omega}
\title{Optimal control of the coefficient for fractional and regional fractional {$p$}-{L}aplace equations: Approximation and convergence
\thanks{The work of the first author is partially supported by  NSF grant DMS-1521590.  The work of the second author is partially supported by the Air Force Office of Scientific Research under the Award No: FA9550-15-1-0027}}
\author{Harbir Antil\thanks{Department of Mathematical Sciences, George Mason University, Fairfax, VA 22030, USA. \texttt{hantil@gmu.edu}},
\and
Mahamadi Warma\thanks{University of Puerto Rico  (Rio Piedras Campus), College of Natural Sciences,
Department of Mathematics, PO Box 70377 San Juan PR
00936-8377, USA. \texttt{mahamadi.warma1@upr.edu, mjwarma@gmail.com}}
}
\begin{document}

\maketitle

  \begin{abstract}
In this paper we study optimal control problems with either fractional or regional fractional $p$-Laplace equation, of order $s$ and $p\in [2,\infty)$, as constraints over a bounded open set with Lipschitz continuous boundary. The control, which fulfills the pointwise box constraints, is given by the coefficient of the involved operator. 
To overcome the degeneracy of both fractional $p$-Laplacians, we introduce a regularization for both operators. We show existence and uniqueness of solution to the regularized state equations and existence of solution to the regularized optimal control problems. We also prove several auxiliary results for the regularized problems which are of independent interest. We conclude with the convergence of the regularized solutions. 
\end{abstract}

\begin{keywords}
 Fractional $p$-Laplace operator,  non-constant coefficient, quasi-linear nonlocal elliptic boundary value problems, optimal control.
\end{keywords}

\begin{AMSMOS}
 35R11, 49J20, 49J45, 93C73.
\end{AMSMOS}

\section{Introduction}

Let $\Omega\subset\RR^N$ be a bounded open set with boundary $\partial\Omega$ and $p\in [2,\infty)$. In this paper we introduce and investigate the existence and approximation of solution to the following {\bf optimal control problem (OCP)}:
\begin{align}\label{Min}
\mbox{Minimize}\left\{ \mathbb I(\kappa,u):= {\frac 12} \int_{\Omega}|u-\xi|^2\;dx+\int_{\Omega}|\nabla \kappa|\right\},
\end{align}
subject to the {\bf state constraints} given by either the regional fractional $p$-Laplace equation
\begin{equation}\label{ellip-eq}
\begin{cases}
\mathcal L_{\Omega,p}^s(\kappa,u)+ u=f\;&\mbox{ in }\;\Omega\\
u=0&\mbox{ on }\;\pOm ,
\end{cases}
\end{equation}
or the fractional $p$-Laplace equation
\begin{equation}\label{e-frac-intro}
\begin{cases}
(-\Delta)_{p}^s(\kappa,u)+u=f\;\;&\mbox{ in }\;\Omega\\
u=0&\mbox{ on }\;\RR^N\setminus\Omega .
\end{cases}
\end{equation}
The control $\kappa$ fulfills the {\bf control constraints}
\begin{align}\label{ad-cont}
\kappa\in \mathfrak{A}_{ad}:=\Big\{\eta\in BV(\Omega):\;\xi_1(x)\le \eta(x)\le \xi_2(x)\;\mbox{ a.e. in }\;\Omega\Big\} .  
\end{align}
Here the regional fractional and fractional operators are given for $x\in\Om$ by:  
\begin{align}\label{op-L}
\mathcal L_{\Omega,p}^s(\kappa,u)(x)=C_{N,p,s}\mbox{P.V.}\int_{\Omega}\kappa(x-y)|u(x)-u(y)|^{p-2}\frac{u(x)-u(y)}{|x-y|^{N+sp}}\;dy,
\end{align}
and for $x\in\RR^N$ by
\begin{align}\label{op-LR}
(-\Delta)_{p}^s(\kappa,u(x)):=C_{N,p,s}\mbox{P.V.}\int_{\RR^N}\kappa(x-y)|u(x)-u(y)|^{p-2}\frac{u(x)-u(y)}{|x-y|^{N+sp}}\;dy ,
\end{align}
respectively.
Moreover, $\kappa:\RR^N\to[0,\infty)$ is a measurable and even function, that is,
\begin{align}\label{even}
\kappa(x)=\kappa(-x),\;\;\forall\;x\in\RR^N.
\end{align} 
In addition, $f$ is a given force and $\xi$ is the given data. The functions $\xi_1$ and $\xi_2$ in \eqref{ad-cont} are the {\bf control bounds} and fulfill $0< \alpha \le \xi_1(x) \le \xi_2(x)$, a.e. $x \in \Omega$, for some constant $\alpha>0$. The precise regularity requirements for these quantities and the domain $\Omega$ will be discussed in Section~\ref{main-result}. Notice that the {\bf control} $\kappa$ appears in the {\bf coefficient} of the quasilinear operators $\mathcal L_{\Omega,p}^s$ and $(-\Delta)_{p}^s$. For \eqref{e-frac-intro}, we let $0<s<1$. We restrict $s$ to $\frac12<s<1$ in the case \eqref{ellip-eq}, see Remark~\ref{rem:sle12} for more details.

Let $a\in L^\infty(\Omega)$ and set
\begin{align}\label{p-laplace2}
\Delta_{p,a}u:=\mbox{div}(a(x)|\nabla u|^{p-2}\nabla u).
\end{align}
Most recently, in \cite{CKL,KM} a similar optimal control problem as {\bf OCP} with $\mathcal L_{\Omega,p}^s$ replaced by $\Delta_{p,a}$ and the control $a(x)$ has been considered. 

Even though {\bf OCP} with the equation \eqref{ellip-eq} is a natural extension of \cite{CKL,KM}, however, the nonlocality of $\mathcal{L}_{\Omega,p}^s$ in comparison to the local operator $\Delta_{p,a}$ makes {\bf OCP} challenging. Indeed the papers \cite{GM,War-Pr}, where the authors considered $\kappa = 1$, realized that the standard techniques available for the local $p$-Laplace equation with the operator $\Delta_{p,a}$ are not directly applicable to the regional fractional $p$-Laplace equation \eqref{ellip-eq}. For the {\bf OCP} the additional complication occurs due to the fact that the operator $\mathcal{L}_{\Omega,p}^s$ may degenerate, see subsection~\ref{s:fpLap} for details. We also refer to \cite{CKL,KM} for a discussion related to this topic in case of $\Delta_{p,a}$. Similar complications can occur when the state constraints in {\bf OCP} are \eqref{e-frac-intro}.

The problem to search for coefficients in case of linear elliptic problems is classical, we  refer (but not limited) to \cite{Mur1,Mur2,ACRK,Tar} and their references. However, this is the first work which provides a mechanism to search for the coefficients in case of a quasilinear, possibly degenerate and fractional nonlocal problem. From a numerical point of view an added attraction of our theory is the fact that it is Hilbert space $L^2$-based instead of $L^p$-based theory. 

Subsequently, to tackle this degeneracy in the operators $\mathcal{L}^s_{\Omega,p}$ (and similarly to $(-\Delta)^s_p$), we introduce a {\bf regularized optimal control problem (ROCP)} and we conclude with the convergence of solution of the regularized problem. Notice that in this paper we discuss the convergence of the optimal controls. Due to the possible degeneracy in the state equation it is unclear how to derive the first order stationarity system for {\bf OCP}. However, {\bf ROCP} comes to rescue, indeed the latter is build to precisely avoid such degeneracy issues. In a forthcoming paper, we shall derive the limiting stationarity system corresponding to the first order stationarity for {\bf ROCP}.

Differential equations of fractional order have gained a lot of attraction in recent years due to the fact that
several phenomena in the sciences are more accurately modelled by such equations
rather than the traditional equations of integer order. Linear and nonlinear equations have been extensively studied. The
applications in industry are numerous and cover almost every area. From the long
list of phenomena which are more appropriately modelled by fractional differential
equations, we mention: viscoelasticity, anomalous transport and diffusion, hereditary
phenomena with long memory, nonlocal electrostatics, the latter being relevant to
drug design, and L\'evy motions which appear in important models in both applied
mathematics and applied probability, as well as in models in biology and ecology. We
refer to  \cite{Mai,PD,VS}  and their references for more details on this topic.

During the course of studying the {\bf OCP}, we show the well-posedness (existence, uniqueness, and continuous dependence on data) of our state equation \eqref{ellip-eq} and the regularized state equation \eqref{ellip-ep}. We further show several important results for the regularized state equation in subsection~\ref{s:reg_state_apriori}. Thus we not only address many challenging issues associated with the state equation \eqref{ellip-eq} but also initiate several new research directions with many possible extensions.

The rest of the paper is organized as follows: In section \ref{s:not_prelim} we introduce the function spaces needed to investigate our problem. We also provide a precise definition of the regional fractional $p$-Laplacian. The results in this section hold for any $0<s<1$. Hereafter, we assume that $\frac12<s<1$. We state our main results for {\bf OCP} with regional fractional $p$-Laplacian in section~\ref{main-result} which is followed by the introduction of the {\bf ROCP} in subsection~\ref{s:regp} and a statement of the convergence results. The well-posedness  of the state system is discussed in section~\ref{s:state}. Section~\ref{s:OCP_exist} discusses the existence of solution to {\bf OCP}. In section~\ref{s:regstate} we discuss well-posedness of the regularized state equation, which is followed by the existence of solution to {\bf ROCP} in section~\ref{s:regOCP_exist}. We show the convergence of {\bf ROCP} solutions to {\bf OCP} solutions in section~\ref{proof-23}. We conclude by studying {\bf OCP} with fractional equation \eqref{e-frac-intro} in section~\ref{s:ocpF}.

\section{Notation and Preliminaries}\label{s:not_prelim}
Here we introduce the function spaces needed to investigate our problem and also prove some intermediate results that will be used throughout the paper. The results stated in this section are valid for any $0<s<1$.

\subsection{The fractional order Sobolev spaces}\label{func-set}
In this (sub)section, we recall some well-known results on fractional order
Sobolev spaces that are needed throughout the article.

Let $\Omega \subset \mathbb{R}^{N}$ be an arbitrary bounded open set. 
For $p\in \lbrack
1,\infty )$ and $s\in (0,1)$, we denote by
\begin{equation*}
W^{s,p}(\Omega ):=\left\{ u\in L^{p}(\Omega ):\;\int_{\Omega }\int_{\Omega }%
\frac{|u(x)-u(y)|^{p}}{|x-y|^{N+ps}}dxdy<\infty \right\},
\end{equation*}
the fractional order Sobolev space endowed with the norm
\begin{equation*}
\Vert u\Vert _{W^{s,p}(\Omega )}:=\left( \int_{\Omega }|u|^{p}\;dx+\int_{\Omega }\int_{\Omega }\frac{|u(x)-u(y)|^{p}}{|x-y|^{N+ps}}%
dxdy\right) ^{\frac{1}{p}}.
\end{equation*}
We let
$\displaystyle
 W_{0}^{s,p}(\Omega ):=\overline{\mathcal{D}(\Omega )}^{W^{s,p}(\Omega )}$.
The following result is taken from \cite[Theorem 1.4.2.4, p.25]{Gris} (see also \cite{Chen,War-N}).

\begin{theorem}\label{theo-gris}
Let $\Omega\subset\RR^N$ be a bounded open set with a Lipschitz continuous boundary. Then the following assertions hold.
\begin{enumerate}
\item If $0<s\le\frac 1p$, then $W^{s,p}(\Omega )= W_0^{s,p}(\Omega)$. 
\item If $\frac 1p<s<1$, then $W_0^{s,p}(\Omega)$ is a proper closed subspace of $W^{s,p}(\Omega)$.
\end{enumerate}
\end{theorem}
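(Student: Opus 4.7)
The plan is to prove the two parts by very different routes: part (1) is a density statement that will require a delicate cut-off/translation argument near the boundary, while part (2) is a soft consequence of the trace theorem for fractional Sobolev spaces. Throughout, I will use that $W_0^{s,p}(\Omega)$ is closed in $W^{s,p}(\Omega)$ by definition (it is defined as the closure of $\mathcal{D}(\Omega)$ in the norm of $W^{s,p}(\Omega)$).

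For part (1), the strategy is to show that $\mathcal{D}(\Omega)$ is dense in $W^{s,p}(\Omega)$ when $0<s\le 1/p$. First I would use a Lipschitz partition of unity subordinate to a finite cover of $\overline{\Omega}$ by balls $B_i$, so that it suffices to approximate $u\in W^{s,p}(\Omega)$ supported in a single $B_i\cap\Omega$. For indices whose ball lies in the interior, standard mollification does the job. For boundary patches I would use the Lipschitz chart to straighten $\partial\Omega$ locally and reduce to the half-space $\RR^N_+$. In that model, the key step is to show that vertical translation $\tau_h u(x',x_N):=u(x',x_N-h)$, $h>0$, pushes the function into the open half-space and converges back to $u$ in $W^{s,p}(\RR^N_+)$ as $h\to 0^+$. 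Combined with standard mollification of $\tau_h u$, this produces elements of $\mathcal{D}(\RR^N_+)$ that converge to $u$ in $W^{s,p}(\RR^N_+)$; pulling back through the chart and summing over the partition of unity then gives the desired approximation in $\mathcal{D}(\Omega)$.

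The main obstacle, and the place where the hypothesis $sp\le 1$ is genuinely used, is in justifying the convergence $\tau_h u\to u$ in $W^{s,p}(\RR^N_+)$. The $L^p$-piece is continuity of translations in $L^p$, which is classical, but the Gagliardo seminorm piece requires a Hardy-type estimate of the form
\begin{equation*}
\int_{\RR^N_+}\frac{|u(x)|^p}{x_N^{sp}}\,dx\;\le\;C\,\|u\|_{W^{s,p}(\RR^N_+)}^p
\end{equation*}
valid precisely when $sp\le 1$ (with an appropriate logarithmic modification at the endpoint $sp=1$, where one instead uses a fine cut-off with a logarithmic profile). This Hardy inequality controls the difference $\tau_h u-u$ near the boundary $\{x_N=0\}$ and yields the sought convergence by a dominated convergence argument on the Gagliardo double integral after splitting it into a region close to $\{x_N=0\}$ (handled via the Hardy bound) and a region bounded away from it (handled by uniform continuity of translations in the Gagliardo seminorm).

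For part (2), under the hypothesis $1/p<s<1$ the classical trace theorem provides a continuous, surjective linear operator $\mathrm{Tr}\colon W^{s,p}(\Omega)\to W^{s-1/p,p}(\partial\Omega)$ that extends the restriction $u\mapsto u|_{\partial\Omega}$ on $C(\overline{\Omega})\cap W^{s,p}(\Omega)$. Every $\varphi\in\mathcal{D}(\Omega)$ satisfies $\mathrm{Tr}\,\varphi=0$, so by continuity of $\mathrm{Tr}$ the closed subspace $W_0^{s,p}(\Omega)$ is contained in $\ker\mathrm{Tr}$. On the other hand, the constant function $1$ belongs to $W^{s,p}(\Omega)$ (since $\Omega$ is bounded) and has $\mathrm{Tr}\,1=1\neq 0$, so $1\notin W_0^{s,p}(\Omega)$, giving the strict inclusion. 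The closedness of $W_0^{s,p}(\Omega)$ is immediate from its definition as a closure. The only delicate ingredient here is the trace theorem itself, which is assumed to be available as a classical fact; granting it, part (2) is essentially a one-line argument.
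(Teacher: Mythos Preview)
The paper does not prove this theorem; it simply quotes it from Grisvard's book (Theorem~1.4.2.4, p.~25) with additional references. Your proposal, by contrast, sketches an actual argument, and the outline is broadly correct and follows the classical route: localization and boundary-flattening for part~(a), with a fractional Hardy-type inequality being the place where $sp\le 1$ enters, and the trace theorem for part~(b).

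Two points of caution on part~(a). First, the translation as you wrote it needs care: $\tau_h u(x',x_N)=u(x',x_N-h)$ is only defined on $\{x_N>h\}$, so you must extend it by zero to all of $\RR^N_+$, and it is precisely the convergence of this zero-extension to $u$ in $W^{s,p}(\RR^N_+)$ that consumes the Hardy estimate. Many treatments instead multiply by a cut-off $\eta_\varepsilon(x_N)$ vanishing near $\{x_N=0\}$ rather than translate, which makes the role of the Hardy inequality more transparent. Second, at the endpoint $sp=1$ the Hardy inequality in the form you wrote genuinely fails, so your parenthetical about a ``logarithmic modification'' is doing real work and would need to be spelled out; this is the delicate Lions--Magenes critical case.

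Your argument for part~(b) via the trace operator is correct as stated, granting the trace theorem for $W^{s,p}(\Omega)$ on Lipschitz domains.
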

It follows from Theorem \ref{theo-gris} that  for a bounded open set with a Lipschitz continuous boundary, if $\frac 1p<s<1$, then
\begin{equation}
\Vert u\Vert_{W_{0}^{s,p}(\Om)}=\left( 
\int_{\Omega}\int_{\Omega}\frac{|u(x)-u(y)|^{p}}{%
|x-y|^{N+sp}}dxdy\right) ^{\frac{1}{p}}  \label{norm-26}
\end{equation}%
defines an equivalent norm on $W_{0}^{s,p}(\Om)$. 
Let $p^{\star }$ be given by
\begin{equation}
p^{\star }=\frac{Np}{N-sp}\;\mbox{ if }\;N>sp\;\mbox{ and }\;p^{\star }\in
\lbrack p,\infty )\;\mbox{ if }\;N=sp.  \label{p-star}
\end{equation}
Then by \cite[Theorems~6.7 and 6.10]{NPV}, 
there exists a constant $C>0$ such that for every $u\in W_{0}^{s,p}(\Omega)$,
\begin{equation}
\Vert u\Vert _{L^q(\Omega) }\leq C\Vert u\Vert _{W_{0}^{s,p}(\Omega
)},\;\;\forall \;q\in \lbrack 1,p^{\star }].  \label{sob-emb}
\end{equation}%
 Moreover, the continuous embedding $W_{0}^{s,p}(\Omega)\hookrightarrow L^{q}(\Omega )$ is compact for every $q\in\lbrack 1,p^{\star })$ (see  e.g. \cite[Corollary~7.2]{NPV}).  If $N<sp$, then one has the continuous embedding $W_0^{s,p}(\Omega)\hookrightarrow C^{0,s-\frac Np}(\bOm)$ (see  e.g. \cite[Theorem 8.2]{NPV}).

We have the following.

\begin{proposition}\label{rem-sob-hol}
Let $\Omega\subset\RR^N$ be an arbitrary bounded open set and $p\in [1,\infty)$. Then the following assertions hold.
\begin{enumerate}
\item If $0<t\le s<1$, then $W_0^{s,p}(\Omega)\hookrightarrow W_0^{t,p}(\Omega)$.

\item For every $0<s<1$, we have that $W_0^{1,p}(\Omega)\hookrightarrow W_0^{s,p}(\Omega)$.

\item Let $q> p$. If $0<t<s<1$, then $W_0^{s,q}(\Omega)\hookrightarrow W_0^{t,p}(\Omega)$.
\end{enumerate}
\end{proposition}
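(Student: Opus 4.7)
The common thread in the three cases is: first establish the claimed $W^{t,p}$-norm estimate on the dense subspace $\mathcal{D}(\Omega)\subset W_0^{s,p}(\Omega)$ (resp.\ $W_0^{s,q}(\Omega)$, $W_0^{1,p}(\Omega)$), and then promote it to membership in $W_0^{t,p}(\Omega)$ via the very definition $W_0^{t,p}(\Omega)=\overline{\mathcal{D}(\Omega)}^{W^{t,p}(\Omega)}$. Concretely, if $(\varphi_n)\subset\mathcal{D}(\Omega)$ converges to $u$ in the source space, the estimate applied to $\varphi_n-\varphi_m$ makes $(\varphi_n)$ Cauchy in $W^{t,p}(\Omega)$; its limit must be $u$, which therefore belongs to $W_0^{t,p}(\Omega)$.

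For part (a), for any $u\in W^{s,p}(\Omega)$ I would split $\Omega\times\Omega$ into the regions $\{|x-y|<1\}$ and $\{|x-y|\ge 1\}$ in the Gagliardo integral. On the near-diagonal piece, $|x-y|^{-N-tp}\le|x-y|^{-N-sp}$ since $t\le s$, giving a bound by $[u]_{W^{s,p}(\Omega)}^p$. On the far-diagonal piece, $|u(x)-u(y)|^p\le 2^{p-1}(|u(x)|^p+|u(y)|^p)$ combined with $\int_{|h|\ge 1}|h|^{-N-tp}\,dh<\infty$ produces a bound by $C\|u\|_{L^p(\Omega)}^p$. For part (b), the same dichotomy is applied to $\varphi\in\mathcal{D}(\Omega)$ extended by zero to $\mathbb{R}^N$. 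The near-diagonal piece is handled via the translation inequality $\|\varphi(\cdot+h)-\varphi\|_{L^p(\mathbb{R}^N)}\le|h|\,\|\nabla\varphi\|_{L^p(\mathbb{R}^N)}$ (obtained from $\varphi(x+h)-\varphi(x)=\int_0^1 h\cdot\nabla\varphi(x+\tau h)\,d\tau$ and Minkowski's integral inequality), which bounds it by $\|\nabla\varphi\|_{L^p}^p\int_{|h|<1}|h|^{p(1-s)-N}\,dh$; this integral converges since $p(1-s)>0$. The far-diagonal piece is treated exactly as in (a).

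Part (c) I expect to be the technical heart. Given $\varphi\in\mathcal{D}(\Omega)$, I would apply Hölder's inequality on $\Omega\times\Omega$ with exponents $q/p$ and $q/(q-p)$, writing
\[
\frac{|\varphi(x)-\varphi(y)|^p}{|x-y|^{N+tp}}
=\frac{|\varphi(x)-\varphi(y)|^p}{|x-y|^{(N+sq)p/q}}\cdot\frac{1}{|x-y|^{\alpha}},\qquad
\alpha:=N+tp-\tfrac{p}{q}(N+sq),
\]
which yields
\[
[\varphi]_{W^{t,p}(\Omega)}^{\,p}\le [\varphi]_{W^{s,q}(\Omega)}^{\,p}\left(\int_{\Omega\times\Omega}|x-y|^{-\alpha q/(q-p)}\,dx\,dy\right)^{(q-p)/q}.
\]
A short calculation gives $\alpha q/(q-p)=N+pq(t-s)/(q-p)$, which is strictly less than $N$ because $t<s$; hence the weight is locally integrable near the diagonal, and the boundedness of $\Omega$ makes the second factor finite. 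Combined with $L^q(\Omega)\hookrightarrow L^p(\Omega)$ (valid since $\Omega$ is bounded and $q>p$), this gives the desired estimate $\|\varphi\|_{W^{t,p}(\Omega)}\le C\|\varphi\|_{W^{s,q}(\Omega)}$, and the general density argument above concludes.
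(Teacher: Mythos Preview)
Your proof is correct. The paper itself does not give a proof of this proposition: it simply records that (a), (b) and (c) are contained in \cite[Proposition~2.1]{NPV}, \cite[Proposition~2.3]{War-N} and \cite[Proposition~1.2]{AW2017note}, respectively. Your arguments are the standard ones that those references employ: the near/far-diagonal splitting for (a), the translation estimate $\|\varphi(\cdot+h)-\varphi\|_{L^p}\le |h|\,\|\nabla\varphi\|_{L^p}$ for (b), and the H\"older trick with exponents $q/p$ and $q/(q-p)$ for (c), where the key computation $\alpha q/(q-p)=N+pq(t-s)/(q-p)<N$ guarantees integrability of the weight. Your density argument via $\mathcal D(\Omega)$ is exactly what makes the result work for an \emph{arbitrary} bounded open set (no boundary regularity needed), which is the point of working with the $W_0$-spaces rather than $W^{s,p}(\Omega)$ itself. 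So there is nothing to compare beyond noting that you have supplied self-contained proofs where the paper defers to the literature.
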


\begin{proof}
The proof of the assertions (a), (b) and (c) is contained in \cite[Proposition 2.1]{NPV}, \cite[Proposition 2.3]{War-N} and in \cite[Proposition 1.2]{AW2017note}, respectively. 
\end{proof}

If $0<s<1$, $p\in (1,\infty )$ and $p^{\prime }:=\frac{p}{p-1}$, then  the space $
W^{-s,p^{\prime }}(\Omega )$ is defined as usual to be the dual of the
reflexive Banach space $W_{0}^{s,p}(\Om )$. For $u\in W^{s,p}(\Omega)$ we shall denote by $U_{(p,s)}$ the function defined on $\Omega\times\Omega$ by
\begin{align}\label{Up}
U_{(p,s)}(x,y):=\frac{u(x)-u(y)}{|x-y|^{\frac Np+s}}.
\end{align}

We will always denote by $\chi_E$ the characteristic function of  a set $E\subseteq\Omega\times\Omega$.

\begin{remark}\label{rem-1}
{\em 
Let $u\in W_0^{s,p}(\Om)$ and $\{u_n\}_{n\in\NN}$ a sequence in $W_0^{s,p}(\Om)$. Then the following assertions hold. 
\begin{enumerate}
\item  If $u_n$ converges weakly to $u$ in $W_0^{s,p}(\Om)$ as $n\to\infty$ (that is, $u_n\rightharpoonup u$ in $W_0^{s,p}(\Om)$ as $n\to\infty$), then for every  $\varphi\in \mathcal D(\Om)$,
\begin{align*}
&\lim_{n\to\infty}\int_{\Omega}\int_{\Omega}\frac{(u_n(x)-u_n(y))(\varphi(x)-\varphi(y))}{|x-y|^{N+2s}}\;dxdy\\
=&\int_{\Omega}\int_{\Omega}\frac{(u(x)-u(y))(\varphi(x)-\varphi(y))}{|x-y|^{N+2s}}\;dxdy.
\end{align*}

\item If $u_n\rightharpoonup u$ in $W_0^{s,p}(\Omega)$, then $U_{n,(p,s)}\rightharpoonup U_{(p,s)}$ in $L^p(\Omega\times\Omega)$ as $n\to\infty$.

\item If $u_n\rightharpoonup u$ in $W_0^{s,p}(\Omega)$ and $U_{n,(p,s)}\to U_{(p,s)}$ in $L^p(\Omega\times\Omega)$ as $n\to\infty$, then $u_n\to u$ in $W_0^{s,p}(\Omega)$.
\end{enumerate}
}
\end{remark}
For
more information on fractional order Sobolev spaces we refer the reader to
\cite{Chen,NPV,Gris,War-N} and the references therein.

\subsection{Functions of bounded variation}
\label{s:BV}
Let $\Omega\subset\RR^N$ be an arbitrary bounded open set. Let 
\begin{align*}
BV(\Omega):=\Big\{g\in L^1(\Omega):\; \|g\|_{BV(\Omega)}<\infty \Big\},
\end{align*}
be the space of functions of bounded variation,
where
\begin{align*}
\|g\|_{BV(\Omega)}:=\|g\|_{L^1(\Omega)}+\sup\left\{\int_{\Omega}g\;\mbox{div}(\Phi)\;dx:\;\Phi\in C_0^1(\Omega,\RR^N),\;|\Phi(x)|\le 1,\;x\in\Omega\right\}.
\end{align*}
For $g\in BV(\Omega)$, we denote by $\nabla g$ the distributional gradient of $g$. We notice that $\nabla g$ belongs to the space of Radon measures $\mathcal M(\Omega,\RR^N)$.

The following notion of convergence is contained in \cite[Definition 3.1]{AFP}.

\begin{remark}\label{rem-21}
{\em Let $g\in BV(\Omega)$ and $\{g_n\}_{n\in\NN}$ a sequence in $BV(\Omega)$. 
\begin{enumerate}
\item We say that $\{g_n\}_{n\in\NN}$ converges weakly$^\star$ ($\weak$) to $g\in BV(\Omega)$ as $n\to\infty$, if and only if the following two conditions hold.
\begin{itemize}
\item[(i)] $g_n\to g$ (strongly) in $L^1(\Omega)$ as $n\to\infty$, and
\item[(ii)] $\nabla g_n\weak \nabla g$ (weakly$^\star$) in $\mathcal M(\Omega,\RR^N)$ as $n\to\infty$, that is,
\begin{align*}
\lim_{n\to\infty}\int_{\Omega}\phi\;d\nabla g_n=\int_{\Omega}\phi\;d\nabla g,\;\;\forall\;\phi\in C_0(\Omega). 
\end{align*}
\end{itemize}
\item In addition, if $g_n$ converges strongly to some $\tilde g$ in $L^1(\Omega)$ as $n\to\infty$ and satisfies 
$\sup_{n\in\NN}\int_{\Omega}|\nabla g_n|<\infty$,
then
\begin{align*}
\tilde g\in BV(\Omega),\;\; \int_{\Omega}|\nabla \tilde g|\le \liminf_{n\to\infty}\int_{\Omega}|\nabla g_n|\;\mbox{ and }\; g_n\weak \tilde g\;\mbox{ in }\; BV(\Omega) \;\mbox{ as }\;n\to\infty.
\end{align*}
\end{enumerate}
}
\end{remark}
For more details on functions of bounded variation we refer to \cite[Chapter 3]{AFP}.

\subsection{The regional fractional $p$-Laplacian}\label{s:fpLap}

Let $0<s<1$ and $p\in (1,\infty )$. The {\bf regional fractional $p$-Laplacian} $%
(-\Delta )_{\Om,p}^{s}$ is defined for  $x\in \Omega$ by the formula
\begin{equation*}
(-\Delta )_{\Om,p}^{s}u(x)=C_{N,p,s}\mbox{P.V.}\int_{\Omega}|u(x)-u(y)|^{p-2}\frac{u(x)-u(y)}{|x-y|^{N+ps}}dy,
\end{equation*}%
where $C_{N,p,s}$ is a normalized constant (see, e.g., \cite%
{BCF,Caf3,NPV,War-DN1} for the linear case $p=2,$ and \cite{War-IP,War-Pr,War-DN}
for the general case $p\in (1,\infty )$). We notice that if $0<s<\frac{p-1}{p}$
and $u$ is smooth (i.e., at least bounded and Lipschitz continuous on $\Omega$), 
then the above integral is in fact not really singular near $x$ (see e.g. \cite[Section 2.1]{AW2017note} for more details).
 If $\Omega=\RR^N$, then $(-\Delta )_{\RR^N,p}^{s}=(-\Delta )_{p}^{s}$ is usually called the {\bf fractional $p$-Laplace operator}, see section~\ref{s:ocpF} for more details.

It has been shown in \cite[Formula (2.4)]{AW2017note} that for every $u\in\mathcal D(\Omega)$,
\begin{align}\label{con-sp-p}
\lim_{s\uparrow 1}\int_{\Omega}u(-\Delta)_{\Omega,p}^su\;dx
=\lim_{s\uparrow 1}\int_{\RR^N}u(-\Delta)_{p}^su\;dx=\int_{\Omega}|\nabla u|^p\;dx=-\int_{\Omega}u\Delta_pu\;dx.
\end{align}
It follows from \eqref{con-sp-p} that the regional fractional $p$-Laplace operator converges (in some sense) to the $p$-Laplace operator, as $s\uparrow 1$.  

Let $\kappa$ be as in \eqref{even}. For $1<p<\infty$ and $0<s<1$ 
we define the operator $\mathcal L_{\Omega,p}^s$ as in \eqref{op-L}. We again call this operator, the {\bf regional fractional $p$-Laplace operator}.
 We mention that elliptic problems associated with the operator $\mathcal L_{\Omega,p}^s(\kappa,\cdot)$ subject to the Dirichlet boundary condition have been investigated in \cite{DiKG,DiKG2,GM,KMS} where the authors have obtained some fundamental existence and regularity results. The case of Neumann and Robin type boundary conditions (with $\kappa=1$) is contained in \cite{War-DN}. We refer to \cite{GM,War-Pr} for further results on parabolic problems.

\section{The main results}\label{main-result}

In this section we state the main results of the article. 
Throughout the remainder of the article, unless stated otherwise,
we assume the following.

\begin{assumption}\label{asump}
We shall always assume the following.
\begin{enumerate}
\item $\Omega\subset\RR^N$ {\em($N\ge 1$)} is a bounded open set with Lipschitz continuous boundary. 

\item $\frac 12<s<1$.
 
 \item The functions $\xi_1,\xi_2\in L^\infty(\Omega)$ and there exists a constant $\alpha>0$ such that
\begin{align}\label{func-xi}
0<\alpha\le \xi_1(x)\le\xi_2(x)\;\mbox{ a.e. in }\;\Omega.
\end{align}

\item The measurable function $\kappa$ satisfies the assumption given in \eqref{even}.
\end{enumerate}
\end{assumption}

Recall that it follows from Assumption \ref{asump}(b) that \eqref{norm-26} defines an equivalent norm on $W_0^{s,p}(\Omega)$ for every $p\in [2,\infty)$. 

\begin{remark}[Regional case: $0<s<1$]\label{rem:sle12}
\rm
The restriction on $s$ in Assumption \ref{asump}(b) is used to show the uniqueness of solution to \eqref{ellip-eq}. This step requires the equivalence between $\|v\|_{W^{s,p}_0(\Omega)}$ and $\left(\|v\|^2_{L^2(\Omega)} + \int_{\Omega}\int_{\Omega}\frac{|v(x)-v(y)|^{p}}{
|x-y|^{N+sp}}dxdy \right)^\frac1p$.  The equivalence follows immediately when $\frac12<s<1$. When $0<s\le\frac12$ the proof will be along the line of \cite[Corollary 1.5.2 p.37]{MP} (they only discuss $s=1$) but we do not explore this here. 
\end{remark}

\subsection{The optimal control problem}

Let $\xi,f\in L^2(\Omega)$ be given functions. 
The {\bf OCP} we consider first is \eqref{Min}, \eqref{ellip-eq} and \eqref{ad-cont}.
The following is our notion of solutions to the state system \eqref{ellip-eq}.

\begin{definition}
A function $u\in W_0^{s,p}(\Omega)$ is said to be a weak solution of the system \eqref{ellip-eq} if for every $\varphi\in W_0^{s,p}(\Omega)$,
\begin{align}\label{eq-15}
&\frac{C_{N,p,s}}{2}\int_{\Omega}\int_{\Omega}\kappa(x-y)|u(x)-u(y)|^{p-2}\frac{(u(x)-u(y))(\varphi(x)-\varphi(y))}{|x-y|^{N+sp}}\;dxdy\notag\\
& +\int_{\Omega}u\varphi\;dx=\int_{\Omega}f\varphi\;dx.
\end{align}
\end{definition}

The following existence result of optimal pair to the {\bf OCP} is our first main result. 

\begin{theorem}\label{theo-24}
Let $\xi,f\in L^2(\Omega)$ be given. Then the {\bf OCP} \eqref{Min}, \eqref{ellip-eq} and \eqref{ad-cont} admits at least one solution $(\kappa,u)\in BV(\Omega)\times W_0^{s,p}(\Omega)$.
\end{theorem}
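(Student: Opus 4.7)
The plan is to apply the direct method of the calculus of variations. Assuming the well-posedness of the state equation \eqref{ellip-eq} (proved in Section~\ref{s:state}) for each $\kappa\in\mathfrak{A}_{ad}$, there is a unique weak solution $u(\kappa)\in W_0^{s,p}(\Omega)$. The admissible set is nonempty (e.g.\ $\kappa\equiv\alpha$), so $I^\star:=\inf\mathbb{I}$ is finite. Let $\{(\kappa_n,u_n)\}_{n\in\N}$ be a minimizing sequence with $u_n=u(\kappa_n)$. Boundedness of $\mathbb{I}(\kappa_n,u_n)$ yields a uniform bound on $\int_\Omega|\nabla \kappa_n|$, and together with $\|\kappa_n\|_{L^\infty(\Omega)}\le\|\xi_2\|_{L^\infty(\Omega)}$ this gives $\sup_n\|\kappa_n\|_{BV(\Omega)}<\infty$. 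Testing \eqref{eq-15} with $\varphi=u_n$, using the lower bound $\kappa_n\ge\alpha$ and the norm equivalence \eqref{norm-26}, I obtain
\begin{equation*}
\frac{\alpha\, C_{N,p,s}}{2}\,\|u_n\|_{W_0^{s,p}(\Omega)}^p+\|u_n\|_{L^2(\Omega)}^2\le \|f\|_{L^2(\Omega)}\|u_n\|_{L^2(\Omega)},
\end{equation*}
so $\{u_n\}$ is bounded in $W_0^{s,p}(\Omega)$.

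Next I extract subsequences. By the compactness of $BV(\Omega)\hookrightarrow L^1(\Omega)$ and Remark~\ref{rem-21}, there is $\kappa^\star\in BV(\Omega)$ with $\kappa_n\weak\kappa^\star$ in $BV(\Omega)$, $\kappa_n\to\kappa^\star$ in $L^1(\Omega)$ and a.e.\ on $\Omega$. The uniform $L^\infty$ bound combined with dominated convergence upgrades this to $\kappa_n\to\kappa^\star$ in $L^q(\Omega)$ for every $q\in[1,\infty)$, and the pointwise bounds \eqref{func-xi} pass to the a.e.\ limit, so $\kappa^\star\in\mathfrak{A}_{ad}$. By reflexivity of $W_0^{s,p}(\Omega)$ and the compact embedding into $L^2(\Omega)$ (Assumption~\ref{asump}(b) ensures $2<p^\star$ via \eqref{p-star}), a further subsequence satisfies $u_n\rightharpoonup u^\star$ in $W_0^{s,p}(\Omega)$, $u_n\to u^\star$ in $L^2(\Omega)$ and a.e.\ in $\Omega$.

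The main obstacle is to show that $u^\star$ is a weak solution of \eqref{ellip-eq} with coefficient $\kappa^\star$; the nonlocal nonlinearity prevents any direct limit passage. I plan to use a Minty-type monotonicity argument. Write $\langle A(\kappa,u),\varphi\rangle$ for the first term in \eqref{eq-15}. The pointwise monotonicity $(|a|^{p-2}a-|b|^{p-2}b)(a-b)\ge 0$ together with $\kappa_n\ge 0$ gives
\begin{equation*}
\langle A(\kappa_n,u_n)-A(\kappa_n,v),\,u_n-v\rangle\ge 0\qquad\forall\,v\in W_0^{s,p}(\Omega),
\end{equation*}
and testing \eqref{eq-15} for $u_n$ with $\varphi=u_n-v$ yields
\begin{equation*}
\int_{\Omega}f(u_n-v)\,dx-\int_{\Omega}u_n(u_n-v)\,dx\ge \langle A(\kappa_n,v),\,u_n-v\rangle.
\end{equation*}
With $v$ fixed, the function $\kappa_n(x-y)\,|v(x)-v(y)|^{p-2}(v(x)-v(y))/|x-y|^{(N+sp)/p'}$ converges to the analogous expression with $\kappa^\star$ strongly in $L^{p'}(\Omega\times\Omega)$ by dominated convergence (using $\kappa_n\le\|\xi_2\|_{L^\infty}$, $\kappa_n\to\kappa^\star$ a.e., and $v\in W_0^{s,p}(\Omega)$), while by Remark~\ref{rem-1}(b) the function $U_{n,(p,s)}$ associated with $u_n$ via \eqref{Up} converges weakly in $L^p(\Omega\times\Omega)$ to the analogous object for $u^\star$. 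The strong–weak duality pairing therefore passes to the limit, and the remaining terms pass by strong $L^2$ convergence of $u_n$, giving
\begin{equation*}
\int_\Omega f(u^\star-v)\,dx - \int_\Omega u^\star(u^\star-v)\,dx\ge \langle A(\kappa^\star,v),\,u^\star-v\rangle.
\end{equation*}
The classical Minty trick (set $v=u^\star\mp\lambda w$, let $\lambda\downarrow 0^+$, and invoke the hemicontinuity of $A(\kappa^\star,\cdot)$) upgrades this to the identity \eqref{eq-15} for $(\kappa^\star,u^\star)$.

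Finally, $\|u_n-\xi\|_{L^2(\Omega)}^2\to\|u^\star-\xi\|_{L^2(\Omega)}^2$ by strong $L^2$ convergence, and $\int_\Omega|\nabla\kappa^\star|\le\liminf_{n\to\infty}\int_\Omega|\nabla\kappa_n|$ by Remark~\ref{rem-21}(b). Hence $\mathbb{I}(\kappa^\star,u^\star)\le I^\star$, so $(\kappa^\star,u^\star)$ is optimal. The delicate step is the Minty passage to the limit; everything else is a standard application of the direct method combined with the BV and fractional-Sobolev compactness results collected in Section~\ref{s:not_prelim}.
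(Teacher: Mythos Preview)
Your proposal is correct and follows essentially the same route as the paper: direct method, compactness for $\{\kappa_n\}$ in $BV(\Omega)$ and for $\{u_n\}$ in $W_0^{s,p}(\Omega)$, then a Minty--Browder argument to identify the limit as an admissible state. The only (minor) technical difference lies in how the limit $\langle A(\kappa_n,v),u_n-v\rangle\to\langle A(\kappa^\star,v),u^\star-v\rangle$ is carried out: the paper (Lemma~\ref{lem-con} and Theorem~\ref{Theo-con}) first restricts to $v=\varphi\in\mathcal D(\Omega)$, proves convergence of the bilinear integral \eqref{eq22}, multiplies by the bounded function $|\varphi(x)-\varphi(y)|^{p-2}/|x-y|^{s(p-2)}$, and finally invokes density of $\mathcal D(\Omega)$ in $W_0^{s,p}(\Omega)$; you instead work directly for arbitrary $v\in W_0^{s,p}(\Omega)$ via the $L^{p'}\times L^p$ duality, using dominated convergence for the $\kappa_n$-factor and Remark~\ref{rem-1}(b) for the $u_n$-factor. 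Your argument is slightly more streamlined (no density step), while the paper's version isolates a reusable lemma; both yield the same Minty relation and hence the same conclusion.
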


\subsection{The regularized optimal control problem}\label{s:regp}
Let $\xi,f\in L^2(\Omega)$ be given functions and $p\in [2,\infty)$. 
 Let $n\in\NN$ and $\mathcal F_n:[0,\infty)\to [0,\infty)$ be a function in $C^1([0,\infty))$ satisfying
 \begin{equation}\label{eq-F}
 \begin{cases}
 \mathcal F_n(\tau)= \tau\;\;\;&\mbox{ if }\;  0\le\tau\le n^2,\\
 \mathcal F_n(\tau)= n^2+1&\mbox{ if }\; \tau>n^2+1,\\
 \tau\le \mathcal F_n(\tau)\le \tau+\delta\;\;&\mbox{ if }\; n^2\le \tau <n^2+1\;\mbox{ for some }\; \delta\in (0,1).
 \end{cases}
 \end{equation}
 Let $\varepsilon>0$ be a small parameter. The operator $\Delta_{p,a}$ defined in \eqref{p-laplace2} is degenerate if $p>2$. To overcome the degeneracy, an $(\varepsilon,p)$-regularization $\Delta_{\varepsilon,n, p,a}$ of $\Delta_{p,a}$ has been introduced (see e.g. \cite{CKL}) as follows:
 \begin{align*}
 \Delta_{\varepsilon,n, p,a}u=\mbox{div}\Big(a(x)(\varepsilon+\mathcal F_n(|\nabla u|^2))^{\frac{p-2}{2}}\nabla u\Big),
 \end{align*}
 where $\mathcal F_n$ is the function defined in \eqref{eq-F}. Using the classical definition of degenerate elliptic operators, one cannot immediately say that $(-\Delta)_{\Omega,p}^s$ or $\mathcal L_{\Om,p}^s(\kappa,\cdot)$ is degenerate for $p>2$. We refer to \cite{Vaz} for a discussion on this topic.  

But inspired by the convergence given in \eqref{con-sp-p}, we let
 \begin{align}
 &\mathcal L_{\Om,p,\varepsilon,n}^s(\kappa,u)(x):=\notag\\
 C_{N,p,s}\mbox{P.V.}&\int_{\Omega}\kappa(x-y)\left[\varepsilon+\mathcal F_n\left(\frac{|u(x)-u(y)|^2}{|x-y|^{2s}}\right)\right]^{\frac{p-2}{2}}\frac{u(x)-u(y)}{|x-y|^{N+2s}}\;dy.
 \end{align} 
 We call $\mathcal L_{\Om,p,\varepsilon,n}^s$ an {\bf $(\varepsilon,p)$-regularization}  of $\mathcal L_{\Om,p}^s$. 
 
 Now we consider our so called regularized optimal control problem {\bf(ROCP)}:
 
 \begin{align}\label{eq-27}
 \mbox{Minimize}\left\{ \mathbb I(\kappa,u):= {\frac 12} \int_{\Omega}|u-\xi|^2\;dx+\int_{\Omega}|\nabla \kappa|\right\}
 \end{align}
subject to the constraints
\begin{align}\label{eq-29}
\kappa\in \mathfrak{A}_{ad}=\Big\{\eta \in BV(\Omega):\; \xi_1(x)\le \eta(x)\le\xi_2(x)\;\mbox{ a.e. in }\Omega\Big\},
\end{align}
and
\begin{equation}\label{ellip-ep}
\begin{cases}
 \mathcal L_{\Om,p,\varepsilon,n}^s(\kappa,u)+ u=f\;\;&\mbox{ in }\;\Omega,\\ 
 u=0&\mbox{ on }\;\pOm.
 \end{cases}
\end{equation}

The following is our notion of weak solution to the system \eqref{ellip-ep}.

\begin{definition}
Let $n\in\NN$, $\varepsilon>0$, $\kappa\in\mathfrak{A}_{ad}$ and $f\in L^2(\Omega)$. A function $u\in W_0^{s,2}(\Omega)$ is said to be a weak solution to the system \eqref{ellip-ep} if the equality
\begin{align}\label{solu-epsi}
\mathbb F_{\varepsilon,n,p}^\kappa(u,v)=\int_{\Omega}fv\;dx
\end{align}
holds for every $v\in W_0^{s,2}(\Omega)$, where we have set
\begin{align}\label{func-FF}
&\mathbb F_{\varepsilon,n,p}^\kappa(u,v):=\int_{\Omega}uv\;dx\\
+&\frac{C_{N,p,s}}{2}\int_{\Omega}\int_{\Omega}\kappa(x-y)\Big[\varepsilon+\mathcal G_n(u,s)\Big]^{\frac{p-2}{2}}\frac{(u(x)-u(y))(v(x)-v(y))}{|x-y|^{N+2s}}\;dxdy ,\notag
\end{align}
and
\begin{align}\label{FUNC-G}
\mathcal G_n(u,s):=\mathcal F_n\left(\frac{|u(x)-u(y)|^2}{|x-y|^{2s}}\right).
\end{align}
\end{definition}

The following theorem is our second main result.

\begin{theorem}\label{theo-ex-ep}
For every $\varepsilon>0$ and $n\in\NN$, the {\bf ROCP} \eqref{eq-27}-\eqref{ellip-ep} has at least one solution $(\kappa_{\varepsilon,n},u_{\varepsilon,n})\in BV(\Omega)\times W_0^{s,2}(\Omega)$.
\end{theorem}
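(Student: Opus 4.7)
I would run the direct method of the calculus of variations, leaning on the well-posedness of the regularized state equation (which has presumably been established in Section~\ref{s:regstate}). Fix $\varepsilon>0$ and $n\in\NN$. Since $\kappa\equiv\xi_1$ is an admissible control and gives a finite value of $\mathbb{I}$, the infimum $m:=\inf\mathbb{I}$ is a nonnegative real number. Pick a minimizing sequence $\{(\kappa_j,u_j)\}\subset\mathfrak{A}_{ad}\times W_0^{s,2}(\Omega)$ with $u_j$ the weak solution of \eqref{ellip-ep} associated with $\kappa_j$ and $\mathbb{I}(\kappa_j,u_j)\to m$. From the cost bound and the pointwise constraint $0<\alpha\le\kappa_j\le\xi_2$ a.e., we get $\sup_j\|\kappa_j\|_{BV(\Omega)}<\infty$. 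Testing \eqref{solu-epsi} with $v=u_j$ and exploiting $[\varepsilon+\mathcal{G}_n(u_j,s)]^{(p-2)/2}\ge\varepsilon^{(p-2)/2}$ together with $\kappa_j\ge\alpha$ yields $\sup_j\|u_j\|_{W_0^{s,2}(\Omega)}<\infty$.

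By the compactness recalled in Remark~\ref{rem-21} and Proposition~\ref{rem-sob-hol} (together with the $BV$--$L^1$ compactness), I can extract a subsequence (not relabeled) such that $\kappa_j\weak \kappa_{\varepsilon,n}$ in $BV(\Omega)$, $\kappa_j\to\kappa_{\varepsilon,n}$ in $L^1(\Omega)$ and a.e., and $u_j\rightharpoonup u_{\varepsilon,n}$ in $W_0^{s,2}(\Omega)$ with $u_j\to u_{\varepsilon,n}$ in $L^2(\Omega)$. Passing a.e.\ to the limit in the pointwise inequalities gives $\xi_1\le\kappa_{\varepsilon,n}\le\xi_2$ a.e., so $\kappa_{\varepsilon,n}\in\mathfrak{A}_{ad}$. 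The even extension of $\kappa_j$ converges a.e.\ on $\RR^N$, and a dominated-convergence argument (using the $L^\infty$ bound $\kappa_j\le\|\xi_2\|_\infty$ and the product $|x-y|^{-2s}\in L^1_{\loc}$) transfers this convergence to the kernel $\kappa_j(x-y)$ in $L^q(\Omega\times\Omega)$ for every finite $q$.

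The core obstacle is passing to the limit in \eqref{solu-epsi}, because the integrand contains the nonlinear factor $[\varepsilon+\mathcal{G}_n(u_j,s)]^{(p-2)/2}$ evaluated at the weakly converging $u_j$. My plan is to use a Minty--Browder monotonicity argument. The nonlinear form $\mathbb{F}_{\varepsilon,n,p}^{\kappa}(u,\cdot)$ associated with the regularized operator is monotone in $u$ for fixed $\kappa$ (one verifies this using that $\tau\mapsto(\varepsilon+\mathcal{F}_n(\tau^2))^{(p-2)/2}\tau$ is nondecreasing, which follows from $\mathcal{F}_n'\ge 0$ and $p\ge 2$), and the coefficient is uniformly bounded above by $(\varepsilon+n^2+1)^{(p-2)/2}$. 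I would first test against a fixed $\varphi\in\mathcal{D}(\Omega)$: by the $L^\infty$ bound on the coefficient and the almost-everywhere convergence of $U_{j,(2,s)}\to U_{\varepsilon,n,(2,s)}$ (extracted via Egorov/Vitali from boundedness in $L^2$ plus strong $L^2$ convergence of $u_j$), together with the Vitali convergence theorem, the nonlinear bilinear term passes to the limit. To identify the limit of the full form applied to $u_j$, I would run the standard Minty trick: for any $w\in W_0^{s,2}(\Omega)$ write $\mathbb{F}_{\varepsilon,n,p}^{\kappa_j}(u_j,u_j-w)\ge \mathbb{F}_{\varepsilon,n,p}^{\kappa_j}(w,u_j-w)$, pass to the limit using what we already have, and then vary $w$ to conclude $u_{\varepsilon,n}$ solves \eqref{ellip-ep} with coefficient $\kappa_{\varepsilon,n}$.

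Finally, lower semicontinuity of $\mathbb{I}$ closes the argument: $\frac12\int|u_{\varepsilon,n}-\xi|^2\,dx\le\liminf_j\frac12\int|u_j-\xi|^2\,dx$ by strong $L^2$ convergence, and $\int_\Omega|\nabla\kappa_{\varepsilon,n}|\le\liminf_j\int_\Omega|\nabla\kappa_j|$ by the lower semicontinuity stated in Remark~\ref{rem-21}. Hence $\mathbb{I}(\kappa_{\varepsilon,n},u_{\varepsilon,n})\le m$, and since the pair is admissible, equality holds and $(\kappa_{\varepsilon,n},u_{\varepsilon,n})$ is a minimizer. I expect the monotonicity/Minty step to be the most delicate part, in particular verifying that the regularized nonlinear form is genuinely monotone despite the cut-off $\mathcal{F}_n$; this monotonicity (and its strict form, for uniqueness of the state) is precisely why the regularization \eqref{eq-F} was designed.
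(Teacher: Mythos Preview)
Your approach is essentially the same as the paper's: direct method, a priori bounds from \eqref{sol-epsilon}, weak$^\star$/weak compactness in $BV(\Omega)\times W_0^{s,2}(\Omega)$, the Minty relation \eqref{eq-minty-ep} to identify the limit state, and lower semicontinuity of $\mathbb I$. The paper's proof is more compressed because it simply invokes ``proceeding as in the proof of Theorem~\ref{theo-24}'' (which in turn rests on Theorem~\ref{Theo-con} and Remark~\ref{rem-45}), and the strict monotonicity of $\mathbb F_{\varepsilon,n,p}^{\kappa}$ is already recorded in Proposition~\ref{exist-ep}, so you need not re-derive it.

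One small streamlining remark: your preliminary Egorov/Vitali step, aimed at passing to the limit directly in $\mathbb F_{\varepsilon,n,p}^{\kappa_j}(u_j,\varphi)$, is unnecessary and is exactly the difficulty the Minty trick is designed to avoid. In the Minty inequality $\mathbb F_{\varepsilon,n,p}^{\kappa_j}(\varphi,\varphi-u_j)\ge\int_\Omega f(\varphi-u_j)\,dx$ the nonlinearity sits at the \emph{fixed} $\varphi$, so the factor $[\varepsilon+\mathcal G_n(\varphi,s)]^{(p-2)/2}$ is bounded and independent of $j$; the limit then follows from $\kappa_j\to\kappa$ in $L^{p'}$ (after dividing by the appropriate kernel, as in Lemma~\ref{lem-con}) paired with the weak convergence $U_{j,(2,s)}\rightharpoonup U_{(2,s)}$ in $L^2(\Omega\times\Omega)$. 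No a.e.\ convergence of $U_{j,(2,s)}$ is needed.
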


We conclude this section by stating the convergence of solutions of the {\bf ROCP} to the solutions of the {\bf OCP}.  

\begin{theorem}\label{theorem-23}
Let $\frac 12<t\le  s<1$ and $p\in [2,\infty)$ with $t=s$ if $p=2$. Let $n\in\NN$ and $\varepsilon>0$.
Let $\{(\kappa_{\varepsilon,n}^\star,u_{\varepsilon,n}^\star)\}_{\varepsilon>0,n\in\NN}\subset BV(\Omega)\times W_0^{s,2}(\Omega)$ be an arbitrary sequence of solutions to the {\bf ROCP} \eqref{eq-27}-\eqref{ellip-ep}. 
Then $\{(\kappa_{\varepsilon,n}^\star,u_{\varepsilon,n}^\star)\}_{\varepsilon>0,n\in\NN}$ is bounded in $BV(\Omega)\times W_0^{t,2}(\Omega)$ and any cluster point $(\kappa_\star,u_\star)$ with respect to the {\em (weak$^\star$, weak)} topology of $BV(\Omega)\times W_0^{t,2}(\Omega)$ is a solution to the {\bf OCP} \eqref{Min}, \eqref{ellip-eq} and \eqref{ad-cont}. 
In addition, if $\kappa_{\varepsilon,n}^\star\weak \kappa_\star$ in $BV(\Omega)$ and $u_{\varepsilon,n}^\star\rightharpoonup u_\star$ in $W_0^{t,2}(\Omega)$, as $\varepsilon\to 0$ and $n\to\infty$ {\em (that is, as $(\varepsilon,n)\to(0,\infty)$)}, then the following assertions hold.
\begin{align}
&\lim_{(\varepsilon,n)\to (0,\infty)}(\kappa_{\varepsilon,n}^\star,u_{\varepsilon,n}^\star) =(\kappa_\star,u_\star)\;\mbox{ stongly in }\; L^1(\Omega)\times W_0^{t,2}(\Omega).\label{210}\\
&\lim_{(\varepsilon,n)\to (0,\infty)}\int_{\Omega}|\nabla \kappa_{\varepsilon,n}^\star|=\int_{\Omega}|\nabla \kappa_\star|.\label{211}\\
&\lim_{(\varepsilon,n)\to (0,\infty)}\chi_{(\Omega\times\Omega)\setminus(\Omega\times\Omega)_n(u_{\varepsilon,n}^\star)}U_{\varepsilon,n,(p,s)}^\star=U_{\star,(p,s)}\;\mbox{ stongly in }\; L^p(\Omega\times\Omega).\label{212}\\
&\lim_{(\varepsilon,n)\to (0,\infty)}\int_{\Omega}\int_{\Omega}\kappa_{\varepsilon,n}^\star(x-y)\left[\varepsilon+\mathcal G_n\left(u_{\varepsilon,n}^\star,s\right)\right]^{\frac{p-2}{2}}\frac{|u_{\varepsilon,n}^\star(x)-u_{\varepsilon,n}^\star(y)|^2}{|x-y|^{N+2s}}\;dy\notag\\
&\qquad\qquad\qquad\qquad=\int_{\Omega}\int_{\Omega}\frac{|u_\star(x)-u_\star(y)|^p}{|x-y|^{N+sp}}\;dxdy.\label{213}\\
&\lim_{(\varepsilon,n)\to (0,\infty)} \mathbb I(\kappa_{\varepsilon,n}^\star,u_{\varepsilon,n}^\star)=\mathbb I(\kappa_\star,u_\star),\label{214}
\end{align}
where we recall that $\mathcal G_n$ is given by \eqref{FUNC-G}.
\end{theorem}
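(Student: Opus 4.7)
First I would test the regularized state equation \eqref{solu-epsi} with $v=u_{\varepsilon,n}^\star$. Using $\kappa_{\varepsilon,n}^\star(x-y)\ge \alpha$ almost everywhere, Young's inequality on $\int fu_{\varepsilon,n}^\star$, and the pointwise inequality
$$
[\varepsilon+\mathcal{G}_n(u,s)]^{\frac{p-2}{2}}\frac{|u(x)-u(y)|^2}{|x-y|^{N+2s}}\;\ge\;\chi_{G_n(u)}\frac{|u(x)-u(y)|^p}{|x-y|^{N+sp}}
$$
on the ``good set'' $G_n(u)=\{(x,y):|u(x)-u(y)|^2/|x-y|^{2s}\le n^2\}$, I obtain a uniform $L^2(\Omega)$ bound together with a uniform bound for the truncated Gagliardo $p$-seminorm. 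Separately, the admissibility of a fixed comparison control (for instance $\kappa\equiv\xi_1$, assumed in $BV$) together with the optimality of $(\kappa_{\varepsilon,n}^\star,u_{\varepsilon,n}^\star)$ and $\mathbb{I}\ge 0$ gives $\sup_{\varepsilon,n}\int_{\Omega}|\nabla\kappa_{\varepsilon,n}^\star|<\infty$. Proposition~\ref{rem-sob-hol}(c) then yields a uniform $W_0^{t,2}(\Omega)$ bound for every $t<s$ (or $t=s$ when $p=2$, where the estimate is direct). Appealing to $BV$-compactness (Remark~\ref{rem-21}) and the compact embedding $W_0^{t,2}(\Omega)\hookrightarrow L^2(\Omega)$, I pass to a subsequence so that $\kappa_{\varepsilon,n}^\star\weak\kappa_\star$ in $BV(\Omega)$ with $L^1$ and a.e.\ convergence, while $u_{\varepsilon,n}^\star\rightharpoonup u_\star$ in $W_0^{t,2}(\Omega)$ with strong $L^2$ and a.e.\ convergence. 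The constraints \eqref{ad-cont} pass to the limit pointwise, so $\kappa_\star\in\mathfrak{A}_{ad}$, and Fatou applied to the bound from $G_n$ shows $u_\star\in W_0^{s,p}(\Omega)$.

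\textbf{Step 2: Passage to the limit in the nonlinear state equation (the main obstacle).} The crux is to show that $u_\star$ is the weak solution of \eqref{ellip-eq} with coefficient $\kappa_\star$. Writing the integrand of \eqref{solu-epsi} tested against $\varphi\in\mathcal{D}(\Omega)$ as
$$
\Phi_{\varepsilon,n}(x,y)=\kappa_{\varepsilon,n}^\star(x-y)\bigl[\varepsilon+\mathcal{G}_n(u_{\varepsilon,n}^\star,s)\bigr]^{\frac{p-2}{2}}\frac{u_{\varepsilon,n}^\star(x)-u_{\varepsilon,n}^\star(y)}{|x-y|^{N+2s}}(\varphi(x)-\varphi(y)),
$$
I split the domain into $G_n(u_{\varepsilon,n}^\star)$ and its complement. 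On the complement, Chebyshev's inequality (using the uniform energy estimate) forces its measure-weighted contribution to vanish as $n\to\infty$ and $\varepsilon\to 0$; smoothness of $\varphi$ absorbs the factor $|\varphi(x)-\varphi(y)|/|x-y|^{1-s}$ into $L^p$ via the Lipschitz estimate. On $G_n(u_{\varepsilon,n}^\star)$, $\mathcal{F}_n$ acts as the identity, and combining the $L^1$ convergence of $\kappa_{\varepsilon,n}^\star$ with a Minty-type monotonicity argument for the $p$-Laplacian kernel (see \cite{War-Pr,GM}) applied after pre-testing with $u_{\varepsilon,n}^\star-v$ for arbitrary $v\in W_0^{s,p}(\Omega)$, I can identify the weak limit of the nonlinear term with the expected $\mathcal{L}_{\Omega,p}^s(\kappa_\star,u_\star)$. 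This is the step I expect to be the hardest: the coefficient, the truncation $\mathcal{F}_n$, and the regularization $\varepsilon$ all vary simultaneously, and one must combine a.e.\ convergence of $\kappa_{\varepsilon,n}^\star$ with dominated convergence on $G_n$ and a monotonicity trick to close the argument.

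\textbf{Step 3: Optimality of $(\kappa_\star,u_\star)$ by a liminf--limsup sandwich.} The liminf direction is straightforward: $\|u_\star-\xi\|_{L^2}^2\le\liminf\|u_{\varepsilon,n}^\star-\xi\|_{L^2}^2$ (actually equality, by strong $L^2$) and $\int_\Omega|\nabla\kappa_\star|\le\liminf\int_\Omega|\nabla\kappa_{\varepsilon,n}^\star|$ (Remark~\ref{rem-21}), so $\mathbb{I}(\kappa_\star,u_\star)\le\liminf\mathbb{I}(\kappa_{\varepsilon,n}^\star,u_{\varepsilon,n}^\star)$. For the limsup, I fix an arbitrary $(\tilde\kappa,\tilde u)\in BV(\Omega)\times W_0^{s,p}(\Omega)$ admissible for \textbf{OCP}. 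Let $\tilde u_{\varepsilon,n}$ denote the unique solution of \eqref{ellip-ep} with coefficient $\tilde\kappa$ (existence/uniqueness established in Section~\ref{s:regstate}); applying Steps 1--2 to the deterministic sequence $(\tilde\kappa,\tilde u_{\varepsilon,n})$ yields $\tilde u_{\varepsilon,n}\to\tilde u$ in the sense described, so $\mathbb{I}(\tilde\kappa,\tilde u_{\varepsilon,n})\to\mathbb{I}(\tilde\kappa,\tilde u)$. Optimality of $(\kappa_{\varepsilon,n}^\star,u_{\varepsilon,n}^\star)$ for \textbf{ROCP} gives $\mathbb{I}(\kappa_{\varepsilon,n}^\star,u_{\varepsilon,n}^\star)\le\mathbb{I}(\tilde\kappa,\tilde u_{\varepsilon,n})$, hence
$$
\limsup_{(\varepsilon,n)\to(0,\infty)}\mathbb{I}(\kappa_{\varepsilon,n}^\star,u_{\varepsilon,n}^\star)\le\mathbb{I}(\tilde\kappa,\tilde u),
$$
for every admissible $(\tilde\kappa,\tilde u)$. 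This proves optimality of $(\kappa_\star,u_\star)$ and, sandwiched with the liminf bound, yields \eqref{214}.

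\textbf{Step 4: Upgrading to strong convergences.} From \eqref{214} and the already established $L^2$-strong convergence $\|u_{\varepsilon,n}^\star-\xi\|_{L^2}\to\|u_\star-\xi\|_{L^2}$, I immediately get \eqref{211}, and strong $L^1$ convergence of $\kappa_{\varepsilon,n}^\star$ is part of Step 1. To obtain \eqref{213}, I test the limit equation \eqref{eq-15} for $u_\star$ with $u_\star$ itself and the regularized equation with $u_{\varepsilon,n}^\star$; the right-hand sides converge by strong $L^2$ convergence of $u_{\varepsilon,n}^\star$ and the $L^2$ bound yields convergence of the energies, which after subtracting the $\int u^2$ piece is precisely \eqref{213}. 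For \eqref{212}, the identity obtained in \eqref{213} on the set $G_n(u_{\varepsilon,n}^\star)$, combined with weak $L^p$ convergence of $\chi_{G_n}U_{\varepsilon,n,(p,s)}^\star$ to $U_{\star,(p,s)}$ (justified by Remark~\ref{rem-1}(b) applied on the good set, together with the vanishing measure of the complement), gives convergence of norms, which upgrades weak to strong convergence in $L^p(\Omega\times\Omega)$ by Remark~\ref{rem-1}(c)'s uniform convexity-type argument. Finally, strong convergence of $u_{\varepsilon,n}^\star$ in $W_0^{t,2}(\Omega)$ in \eqref{210} follows from the combination of weak $W_0^{t,2}$ convergence with convergence of the seminorm extracted from \eqref{213} via the embedding in Proposition~\ref{rem-sob-hol}.
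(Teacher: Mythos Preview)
Your overall architecture (uniform bounds $\Rightarrow$ subsequence $\Rightarrow$ identification of the limiting state via a Minty device $\Rightarrow$ liminf--limsup sandwich for optimality $\Rightarrow$ upgrade to strong convergences) is the same as the paper's, and Steps~3--4 match the paper's argument almost line for line. There are, however, two places where your execution diverges from the paper and where, as written, your argument has gaps.

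\textbf{Step 1.} Invoking Proposition~\ref{rem-sob-hol}(c) does not work: that embedding requires the full $W_0^{s,p}(\Omega)$ seminorm, whereas testing \eqref{solu-epsi} with $u_{\varepsilon,n}^\star$ only controls the \emph{truncated} Gagliardo energy on the good set $G_n$; on the complement $u_{\varepsilon,n}^\star$ need not even lie in $W_0^{s,p}$. The paper instead bounds the $W_0^{t,2}$ seminorm directly by splitting $\Omega\times\Omega$ into the good and bad sets (Proposition~\ref{prop-46}, estimates \eqref{AB5}--\eqref{AB6}, and Remark~\ref{rem-49}): on the good set a H\"older step against the kernel $|x-y|^{-N+2(s-t)p/(p-2)}$ reduces to the truncated $p$-energy, while on the bad set one exploits $\mathcal G_n\ge n^2$ to bound the $W_0^{s,2}$ (hence $W_0^{t,2}$) integrand by the regularized energy density itself. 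This yields \eqref{D1}, which together with the uniform quasi-norm bound \eqref{B2} gives the $W_0^{t,2}$ estimate you need.

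\textbf{Step 2.} Your plan is to split the integrand $\Phi_{\varepsilon,n}$ (with the nonlinearity evaluated at $u_{\varepsilon,n}^\star$) over $G_n$ and its complement and then run a Minty--Browder identification. The paper avoids this entirely: by Remark~\ref{rem-45} one passes to the limit not in the equation but in the Minty inequality
\(
\mathbb F_{\varepsilon_k,n_k,p}^{\kappa_k}(\varphi,\varphi-u_k)\ge\int_\Omega f(\varphi-u_k),
\)
where the bracket $[\varepsilon_k+\mathcal G_{n_k}(\varphi,s)]^{(p-2)/2}$ is evaluated at the \emph{fixed} test function $\varphi\in\mathcal D(\Omega)$. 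The $\varphi$-dependent factor then converges strongly in $L^{p'}(\Omega\times\Omega)$ by dominated convergence (see \eqref{S1}--\eqref{S2}), and only the linear-in-$u_k$ factor remains, for which the weak $W_0^{t,2}$ convergence suffices. No good/bad splitting is needed at this stage; one lands directly on the limiting Minty relation \eqref{eq-minty}, which by Remark~\ref{rem} is equivalent to the weak formulation \eqref{eq-15}. The good/bad splitting reappears only later, in the paper's Lemma~\ref{lem-52}, to establish the strong convergences \eqref{58}--\eqref{59} and then \eqref{57}, in the order (and with the mechanism) you outline in Step~4.
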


\section{Proof of Theorem \ref{theo-24}}\label{proof-24}

To prove the first main result we need some preparations and some intermediate important results. 

\subsection{The state equation is well-posed}\label{s:state}

Throughout the remainder of the paper for $u,\varphi\in W_0^{s,p}(\Omega)$, we shall let
\begin{align}\label{form}
&\mathcal E_{p,s}^\kappa(u,\varphi):=\int_{\Omega}u\varphi\;dx\notag\\
&+\frac{C_{N,p,s}}{2}\int_{\Omega}\int_{\Omega}\kappa(x-y)|u(x)-u(y)|^{p-2}\frac{(u(x)-u(y))(\varphi(x)-\varphi(y))}{|x-y|^{N+sp}}\;dxdy.
\end{align}
We have the following result of existence of weak solutions to the system \eqref{ellip-eq}.

\begin{proposition}[\bf The well-posedness of the state equation]\label{prop-ex-sol}
For every $f\in L^2(\Omega)$, the system \eqref{ellip-eq} has a unique weak solution $u\in W_0^{s,p}(\Omega)$. In addition there exists a constant $C>0$  such that
\begin{align}\label{sol-esti}
\|u\|_{W_0^{s,p}(\Omega)}^{p-1}\le C\|f\|_{L^2(\Omega)}.
\end{align}
\end{proposition}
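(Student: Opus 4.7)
The plan is to prove existence via the direct method of the calculus of variations, then obtain uniqueness by the strict monotonicity of the $p$-Laplacian nonlinearity together with the $L^2$ term, and finally derive the estimate by testing with the solution itself. Concretely, I would introduce the energy functional
\[
J(v):=\frac{C_{N,p,s}}{2p}\int_\Omega\!\int_\Omega \kappa(x-y)\frac{|v(x)-v(y)|^p}{|x-y|^{N+sp}}\,dxdy+\frac12\|v\|_{L^2(\Omega)}^2-\int_\Omega fv\,dx
\]
on $W_0^{s,p}(\Omega)$. A formal Gateaux differentiation in a direction $\varphi$ reproduces exactly the left-hand side of \eqref{eq-15}, so any critical point of $J$ is a weak solution; conversely weak solutions are critical points of $J$.

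Next I would verify that the direct method applies. Since $\kappa\ge \xi_1\ge\alpha>0$ a.e.\ (Assumption \ref{asump}(c)), the leading term dominates $\frac{\alpha C_{N,p,s}}{2p}\|v\|_{W_0^{s,p}(\Omega)}^p$, where I use the equivalent norm \eqref{norm-26} valid because $s>1/p$ under Assumption \ref{asump}(b). Combined with Young's inequality applied to $\int fv$ and the continuous embedding $W_0^{s,p}(\Omega)\hookrightarrow L^2(\Omega)$, the functional $J$ is coercive on $W_0^{s,p}(\Omega)$. Convexity of $t\mapsto t^p$ and non-negativity of $\kappa$ make the first term convex in $v$; together with the strictly convex quadratic term $\tfrac12\|v\|_{L^2(\Omega)}^2$ the whole functional is strictly convex. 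Weak lower semicontinuity then follows from convexity and continuity of $J$ on $W_0^{s,p}(\Omega)$ (the nonlocal term is continuous because $\kappa\in L^\infty(\Omega)$ is a bounded weight, while the other two terms are trivially w.l.s.c.). Hence a minimizer $u\in W_0^{s,p}(\Omega)$ exists, yielding existence of a weak solution; strict convexity of $J$ yields uniqueness of the minimizer, and hence of the weak solution.

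For the a priori estimate I would test \eqref{eq-15} with $\varphi=u$. The pointwise positivity of $\kappa$ gives
\[
\|u\|_{L^2(\Omega)}^2+\frac{\alpha C_{N,p,s}}{2}\|u\|_{W_0^{s,p}(\Omega)}^p\le \int_\Omega fu\,dx\le \|f\|_{L^2(\Omega)}\|u\|_{L^2(\Omega)}.
\]
The $L^2$ part gives $\|u\|_{L^2(\Omega)}\le\|f\|_{L^2(\Omega)}$, and applying the Sobolev embedding $W_0^{s,p}(\Omega)\hookrightarrow L^2(\Omega)$ (which holds since $p\ge 2$ implies $p^\star\ge 2$) to the right-hand side yields $\|u\|_{W_0^{s,p}(\Omega)}^p\le C\|f\|_{L^2(\Omega)}\|u\|_{W_0^{s,p}(\Omega)}$, i.e.\ \eqref{sol-esti}. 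As an alternative route to uniqueness that does not rely on the variational structure, one can subtract the weak equations for two solutions $u_1,u_2$, test with $u_1-u_2$, and apply the classical inequality $(|a|^{p-2}a-|b|^{p-2}b)\cdot(a-b)\ge c_p|a-b|^p$ for $p\ge 2$, obtaining $\|u_1-u_2\|_{L^2(\Omega)}^2+c\alpha\|u_1-u_2\|_{W_0^{s,p}(\Omega)}^p\le 0$.

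The main obstacle I expect is the verification that the nonlocal weighted term is weakly lower semicontinuous even when $\kappa$ is merely in $L^\infty(\Omega)$ (recall $\kappa$ is a BV, and thus bounded, function of $x-y$); here convexity in $v$ combined with strong continuity does the job, but care is needed to write the form on the product domain $\Omega\times\Omega$ and to recognize that the weight does not depend on $v$. A secondary delicate point is Remark \ref{rem:sle12}: the condition $s>1/2$ enters through the equivalence of norms on $W_0^{s,p}(\Omega)$, which is what allows us both to treat the $L^2$ coercivity and to close the uniqueness argument via the same Sobolev space; without it, one would have to work with the sum norm and invoke finer results.
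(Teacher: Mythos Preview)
Your argument is correct, but it follows a genuinely different route from the paper. The paper does not introduce the energy functional $J$; instead it applies the Browder--Minty theorem for monotone operators directly to the map $u\mapsto \mathcal E_{p,s}^\kappa(u,\cdot)$, checking that it is strictly monotone, hemi-continuous and coercive from $W_0^{s,p}(\Omega)$ into its dual. The a~priori estimate \eqref{sol-esti} is obtained exactly as you do, by testing with $\varphi=u$.

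What each approach buys: your variational route is arguably more elementary and makes the strict convexity (hence uniqueness) transparent without invoking the algebraic inequality $(|a|^{p-2}a-|b|^{p-2}b)(a-b)\ge c_p|a-b|^p$. The paper's monotone-operator route, on the other hand, yields the Minty relation \eqref{eq-minty} as an immediate by-product (see Remark~\ref{rem}), and this equivalent characterization of weak solutions is used essentially in the compactness argument of Theorem~\ref{Theo-con} and in passing to the limit in the regularized problems (Lemma~\ref{lem-52}). From your variational formulation the Minty relation can of course be recovered via convexity, but it is not automatic. A minor remark on your last paragraph: the role you attribute to $s>\tfrac12$ is slightly off---in the paper this restriction is tied to a specific norm equivalence used for uniqueness in their operator framework (Remark~\ref{rem:sle12}), whereas in your approach strict convexity of $J$ gives uniqueness without that ingredient; the only place $s>\tfrac1p$ truly enters your proof is through the equivalent seminorm \eqref{norm-26}.
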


\begin{proof}
The proposition follows by showing first that $\mathcal E_{p,s}^\kappa(u,\cdot)\in W^{-s,p'}(\Omega)$ for every fixed $u\in W_0^{s,p}(\Omega)$, and then that $\mathcal E_{p,s}^\kappa$ is strictly monotone, hemi-continuous and coercive.
Finally \eqref{sol-esti} follows by taking $\varphi=u$ as a test function in \eqref{eq-15}. For more details we refer to \cite[Proposition 2.3]{AW2017note}. The proof is finished. 
\end{proof}

\begin{remark}[\bf The state equation and Minty relation]\label{rem}
{\em 
As a consequence of the proof of Proposition \ref{prop-ex-sol}, we have that $u\in W_0^{s,p}(\Omega)$ satisfies \eqref{eq-15} if and only if the {\bf Minty relation} holds. That is, for every $\varphi\in W_0^{s,p}(\Omega)$,
\begin{align}\label{eq-minty}
\mathcal E_{p,s}^\kappa(\varphi ,\varphi-u)\ge \int_{\Omega}f(\varphi-u)\;dx.
\end{align}
For more details we refer to \cite[Remark 2.5]{AW2017note}.}
\end{remark}

\subsection{The optimal control problem (OCP)}\label{s:OCP_exist}

Towards this end we introduce the set of admissible control-state pair for the {\bf OCP} \eqref{Min}-\eqref{ellip-eq}, namely,
\begin{align}\label{set-E}
\Xi:=\Big\{(\kappa,u):\; \kappa\in \mathfrak{A}_{ad},\;u\in W_0^{s,p}(\Omega),\; (\kappa,u) \mbox{ are related by } \eqref{eq-15}\Big\}.
\end{align}
Using Proposition \ref{prop-ex-sol}, we get that the set $\Xi$
is nonempty. With the notation \eqref{set-E}, we have that the  {\bf OCP} \eqref{Min}-\eqref{ellip-eq} can be rewritten as the following minimization problem:
\begin{align}\label{eq-min}
\min_{(\kappa,u)\in\Xi} \mathbb I(\kappa,u).
\end{align}

Next, we endow the Banach space $BV(\Omega)\times W_0^{s,p}(\Omega)$ with the norm defined by
\begin{align*}
\|(\kappa,u)\|_{BV(\Omega)\times W_0^{s,p}(\Omega)}:=\|\kappa\|_{BV(\Omega)}+\|u\|_{W_0^{s,p}(\Omega)}.
\end{align*}

We have the following result.

\begin{lemma}\label{lem-con}
Let $\{(\kappa_n,u_n)\}_{n\in\NN}\subset\Xi$ be such that $\kappa_n\weak \kappa$ in $BV(\Omega)$ and $u_n\rightharpoonup u$ in $W_0^{s,p}(\Omega)$, as $n\to\infty$. Then for every $\varphi\in\mathcal D(\Omega)$ we have that
\begin{align}\label{eq22}
&\lim_{n\to\infty}\int_{\Omega}\int_{\Omega}\kappa_n(x-y)\frac{(u_n(x)-u_n(y))(\varphi(x)-\varphi(y))}{|x-y|^{N+2s}}\;dxdy\notag\\
=&\int_{\Omega}\int_{\Omega}\kappa(x-y)\frac{(u(x)-u(y))(\varphi(x)-\varphi(y))}{|x-y|^{N+2s}}\;dxdy.
\end{align}
\end{lemma}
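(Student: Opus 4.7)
The plan is to factor the integrand as a product of three pieces and reduce the convergence to a weak--strong pairing in dual Lebesgue spaces. With $p':=p/(p-1)$ and the splitting $|x-y|^{N+2s}=|x-y|^{N/p+s}\cdot|x-y|^{N/p'+s}$, the integrand in \eqref{eq22} becomes $\kappa_n(x-y)\,U_{n,(p,s)}(x,y)\,\Phi(x,y)$, where $U_{n,(p,s)}$ is as in \eqref{Up} and
\[
\Phi(x,y):=\frac{\varphi(x)-\varphi(y)}{|x-y|^{N/p'+s}}.
\]
Since $\varphi\in\mathcal D(\Omega)$ is smooth with compact support, $\|\Phi\|_{L^{p'}(\Omega\times\Omega)}^{p'}=[\varphi]_{W^{s,p'}(\Omega)}^{p'}<\infty$, so $\Phi\in L^{p'}(\Omega\times\Omega)$. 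Denoting the two sides of \eqref{eq22} by $I_n$ and $I$, I would split
\begin{align*}
I_n - I &= \iint_{\Omega\times\Omega}(\kappa_n-\kappa)(x-y)\,U_{n,(p,s)}\,\Phi\,dxdy \\
&\quad + \iint_{\Omega\times\Omega}\kappa(x-y)\,\bigl(U_{n,(p,s)}-U_{(p,s)}\bigr)\,\Phi\,dxdy \;=:\; A_n + B_n.
\end{align*}

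For $B_n$ the argument is pure duality. The constraint $\kappa_n\in\mathfrak{A}_{ad}$ plus a.e.~convergence along a subsequence transfers the pointwise bound to $\|\kappa\|_{L^\infty}\le\|\xi_2\|_{L^\infty}$, hence the fixed test function $\kappa(x-y)\Phi(x,y)$ lies in $L^{p'}(\Omega\times\Omega)$. Remark \ref{rem-1}(b) gives $U_{n,(p,s)}\rightharpoonup U_{(p,s)}$ weakly in $L^p(\Omega\times\Omega)$, so $B_n\to 0$ by the very definition of weak convergence.

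For $A_n$, the family $\{U_{n,(p,s)}\}$ is bounded in $L^p(\Omega\times\Omega)$ since $\{u_n\}$ is bounded in $W_0^{s,p}(\Omega)$, so by H\"older's inequality it suffices to show $(\kappa_n-\kappa)(x-y)\Phi(x,y)\to 0$ strongly in $L^{p'}(\Omega\times\Omega)$. By Remark \ref{rem-21}(a) the weak$^\star$ convergence $\kappa_n\weak\kappa$ in $BV(\Omega)$ implies $\kappa_n\to\kappa$ strongly in $L^1(\Omega)$, so along a subsequence a.e.~on $\Omega$; via Fubini this upgrades to a.e.~convergence $\kappa_n(x-y)\to\kappa(x-y)$ on $\Omega\times\Omega$. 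With the dominator $(2\|\xi_2\|_{L^\infty})^{p'}|\Phi|^{p'}\in L^1(\Omega\times\Omega)$ in hand, dominated convergence delivers $L^{p'}$-convergence along the subsequence, and a standard subsequence-of-subsequences argument extends it to the whole sequence, giving $A_n\to 0$. The main obstacle I anticipate is precisely this transfer of a.e.~convergence from $\Omega$ to $\Omega\times\Omega$ under the pull-back $(x,y)\mapsto x-y$, which requires the convention that the control is defined on a set containing $\Omega-\Omega$, consistent with \eqref{op-L} and \eqref{even}.
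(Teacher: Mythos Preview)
Your argument is correct and is in the same spirit as the paper's proof---both reduce the claim to a weak--strong pairing in $L^p(\Omega\times\Omega)\times L^{p'}(\Omega\times\Omega)$---but the two proofs distribute the kernel weight differently. The paper groups $u_n$ and $\varphi$ together, setting
\[
F_{n,p}^\varphi(x,y):=\frac{(u_n(x)-u_n(y))(\varphi(x)-\varphi(y))}{|x-y|^{N/p+s+1}},\qquad
K_{n,p'}(x,y):=\frac{\kappa_n(x-y)}{|x-y|^{N/p'+s-1}},
\]
and uses the Lipschitz continuity of $\varphi$ to extract one full power of $|x-y|$, which makes $K_{n,p'}$ an honest $L^{p'}$ function (here $s<1$ is essential); strong convergence $K_{n,p'}\to K_{p'}$ then pairs with weak convergence $F_{n,p}^\varphi\rightharpoonup F_p^\varphi$. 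Your decomposition instead keeps $U_{n,(p,s)}$ isolated and puts the entire $s$-weight on $\Phi$, relying only on $\varphi\in W^{s,p'}(\Omega)$ rather than on $\varphi\in C^{0,1}$. This buys you two things: you can cite Remark~\ref{rem-1}(b) directly for the weak convergence $U_{n,(p,s)}\rightharpoonup U_{(p,s)}$, and you avoid the paper's step of identifying the weak limit of $F_{n,p}^\varphi$ via a.e.\ convergence of $u_n$ (which, strictly speaking, only holds along a subsequence after invoking compact embedding). Your flagged concern about transferring a.e.\ convergence of $\kappa_n$ through the pull-back $(x,y)\mapsto x-y$ is the same implicit convention the paper relies on when it applies dominated convergence to $K_{n,p'}$, so you are on equal footing there.
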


\begin{proof}
First, since $\kappa_n\to \kappa$  in $L^1(\Omega)$ as $n\to\infty$ and $\{\kappa_n\}_{n\in\NN}$ is bounded in $L^\infty(\Omega)$, we have that 
\begin{equation}\label{EQ}
\kappa_n\to \kappa\; \mbox{ in }\; L^q(\Omega) \;\mbox{ as }\;n\to\infty, \mbox{ for every }\; 1\le q<\infty.
\end{equation}
 In addition we have that $\kappa\in\mathfrak A_{ad}$.
Since $u_n\rightharpoonup u$ in $W_0^{s,p}(\Omega)$ as $n\to\infty$, it follows from Remark \ref{rem-1} that for every $\varphi\in\mathcal D(\Omega)$,
\begin{align*}
&\lim_{n\to\infty}\int_{\Omega}\int_{\Omega}\frac{(u_n(x)-u_n(y))(\varphi(x)-\varphi(y))}{|x-y|^{N+2s}}\;dxdy \\
=&\int_{\Omega}\int_{\Omega}\frac{(u(x)-u(y))(\varphi(x)-\varphi(y))}{|x-y|^{N+2s}}\;dxdy.
\end{align*}
Let $\varphi\in\mathcal D(\Omega)$ and define the functions $F_{n,p}^\varphi, F_p^\varphi:\Omega\times\Omega\to\RR$ by
\begin{align*}
F_{n,p}^\varphi(x,y):=\frac{(u_n(x)-u_n(y))(\varphi(x)-\varphi(y))}{|x-y|^{\frac Np+s+1}}
\end{align*}
and
\begin{align*}
F_p^\varphi(x,y):=\frac{(u(x)-u(y))(\varphi(x)-\varphi(y))}{|x-y|^{\frac Np+s+1}}.
\end{align*}
Then
\begin{align}\label{JW1}
\int_{\Omega}\int_{\Omega}|F_{n,p}^\varphi(x,y)|^p\;dxdy
\le &\|\varphi\|_{C^{0,1}(\bOm)}^p\|u_n\|_{W_0^{s,p}(\Omega)}^p<\infty.
\end{align}
Similarly, we get that $F_{n,p}^\varphi, F_{p}^\varphi\in L^p(\Om\times\Om)$. Since $\{u_n\}_{n\in\NN}$ is bounded in $W_0^{s,p}(\Omega)$, it follows from \eqref{JW1} that $\{F_{n,p}^\varphi\}_{n\in\NN}$ is bounded in $L^p(\Om\times\Om)$. Thus, after a (sub)sequence if necessary,  $F_{n,p}^\varphi$ converges weakly to some function $F$ in $L^p(\Omega\times\Omega)$, as $n\to\infty$. Since $u_n$ converges a.e. to $u$ in $\Omega$ as $n\to\infty$, it follows that $F_{n,p}^\varphi$ converges a.e. to $F_p^\varphi$ in $\Omega\times\Omega$, as $n\to\infty$. By the uniqueness of the limit we have that $F_p^\varphi= F$. 
We have shown that $F_{n,p}^\varphi\rightharpoonup F_p^\varphi$ in $L^p(\Omega\times\Omega)$ as $n\to\infty$.
Let $K_{n,p'}, K_{p'}:\Omega\times\Omega\to\RR$ be the functions given by
\begin{align*}
K_{n,p'}(x,y):=\frac{\kappa_n(x-y)}{|x-y|^{\frac{N}{p'}+s-1}}\;\mbox{ and } K_{p'}(x,y):=\frac{\kappa(x-y)}{|x-y|^{\frac{N}{p'}+s-1}}.
\end{align*}
Let $x\in\Omega$ be fixed. Let $B(x,R)$ be a large ball with center $x$ and radius $R$ such that $\Omega\subset B(x,R)$. Since $\kappa_n\in L^\infty(\Omega)$, then using polar coordinates, we have that there exists a constant $C>0$ (depending on $\Omega$, $N$, $s$ and $p$) such that
\begin{align*}
\int_{\Omega}\int_{\Omega}|K_{n,p'}(x,y)|^{p'}\;dxdy\le &\|\kappa_n\|_{L^\infty(\Omega)}^{p'}\int_{\Omega}\int_{\Omega}\frac{1}{|x-y|^{N+p'(s-1)}}\;dxdy\\
\le &C\|\kappa_n\|_{L^\infty(\Omega)}^{p'}\int _0^Rr^{p'(1-s)-1}\;dr\le C\|\xi_2\|_{L^\infty(\Omega)}^{p'}<\infty.
\end{align*}
Thus $K_{n,p'}\in L^{p'}(\Omega\times\Omega)$. Similarly, we get that $K_{p'}\in L^{p'}(\Omega\times\Omega)$.
Using \eqref{EQ} and the Lebesgue Dominated Convergence Theorem, we get that $K_{n,p'}\to K_{p'}$ in $L^{p'}(\Omega\times\Omega)$ as $n\to\infty$. Therefore,
\begin{align*}
&\lim_{n\to\infty}\int_{\Omega}\int_{\Omega}K_{n,p'}(x,y)F_{n,p}^\varphi(x,y)\;dxdy
=\int_{\Omega}\int_{\Omega}K_{p'}(x,y)F_p^\varphi(x,y)\;dxdy\\
=&\int_{\Omega}\int_{\Omega}\kappa(x-y)\frac{(u(x)-u(y))(\varphi(x)-\varphi(y))}{|x-y|^{N+2s}}\;dxdy,
\end{align*}
for every $\varphi\in \mathcal D(\Omega)$.
We have shown \eqref{eq22} and the proof is finished.
\end{proof}

Using Lemma \ref{lem-con} we can prove the following theorem which will play an important role in the proof of our first main result.

\begin{theorem}\label{Theo-con}
Let $\{(\kappa_n,u_n)\}_{n\in\NN}\subset\Xi$ be a bounded sequence. Then there exists $(\kappa,u)\in\Xi$ such that, after a {\em (sub)}sequence if necessary, $\kappa_n\weak\kappa$ in $BV(\Omega)$, $u_n\rightharpoonup u$ in $W_0^{s,p}(\Omega)$  and $u_n\to u$ in $L^2(\Omega)$, as $n\to\infty$.
\end{theorem}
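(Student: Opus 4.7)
The plan is to extract converging subsequences by standard compactness and then identify the limit as an element of $\Xi$ via the Minty monotonicity characterization from Remark~\ref{rem}. The Minty reformulation is precisely what turns the non-local nonlinearity into a quantity that is tractable under mere weak convergence of $u_n$.

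First I would handle compactness. Boundedness of $\{\kappa_n\}$ in $BV(\Omega)$ combined with the compact embedding $BV(\Omega)\hookrightarrow L^1(\Omega)$ and Remark~\ref{rem-21} yields, along a subsequence, $\kappa_n\weak \kappa$ in $BV(\Omega)$ and $\kappa_n\to\kappa$ in $L^1(\Omega)$. The pointwise bounds $\xi_1\le\kappa_n\le\xi_2$ pass to the a.e.\ limit, so $\kappa\in\mathfrak{A}_{ad}$; the uniform $L^\infty$-bound together with interpolation on the bounded domain upgrades the convergence to $\kappa_n\to\kappa$ in $L^q(\Omega)$ for every $q\in[1,\infty)$. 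For $\{u_n\}$, reflexivity of $W_0^{s,p}(\Omega)$ and the compact embedding $W_0^{s,p}(\Omega)\hookrightarrow L^2(\Omega)$ (valid since $p\ge 2$ and $2<p^{\star}$) give, along a further subsequence, $u_n\rightharpoonup u$ in $W_0^{s,p}(\Omega)$ and $u_n\to u$ in $L^2(\Omega)$.

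The nontrivial step is to prove $(\kappa,u)\in\Xi$, i.e.\ that $u$ solves \eqref{eq-15} with coefficient $\kappa$. A direct limit passage in \eqref{eq-15} would require identifying the weak limit of $|u_n(x)-u_n(y)|^{p-2}(u_n(x)-u_n(y))/|x-y|^{\frac{N}{p'}+s}$, for which strong $L^p$-convergence of $U_{n,(p,s)}$ is needed and is not available. Instead I would invoke Remark~\ref{rem}: for each $n$ and every $\varphi\in W_0^{s,p}(\Omega)$,
$$\mathcal E_{p,s}^{\kappa_n}(\varphi,\varphi-u_n)\ge \int_\Omega f(\varphi-u_n)\,dx.$$
The decisive point is that the nonlinear factor $|\varphi(x)-\varphi(y)|^{p-2}(\varphi(x)-\varphi(y))$ is now frozen in the test function and independent of $n$. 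Fixing $\varphi\in\mathcal D(\Omega)$, the zero-order and right-hand-side terms converge by $u_n\to u$ in $L^2(\Omega)$. For the non-local term I would factor the integrand as $K_n(x,y)H_n(x,y)$ with
$$K_n(x,y):=\kappa_n(x-y)\,|\varphi(x)-\varphi(y)|^{p-2}\frac{\varphi(x)-\varphi(y)}{|x-y|^{\frac{N}{p'}+s}},\qquad H_n(x,y):=\frac{(\varphi-u_n)(x)-(\varphi-u_n)(y)}{|x-y|^{\frac{N}{p}+s}}.$$
Exactly as in Lemma~\ref{lem-con}, the smoothness of $\varphi$, the uniform $L^\infty$-bound $\|\kappa_n\|_{L^\infty}\le\|\xi_2\|_{L^\infty}$, and $\kappa_n\to\kappa$ in $L^q(\Omega)$ yield $K_n\to K$ strongly in $L^{p'}(\Omega\times\Omega)$ via dominated convergence, while Remark~\ref{rem-1}(b) applied to $\varphi-u_n$ gives $H_n\rightharpoonup H$ in $L^p(\Omega\times\Omega)$. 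The strong--weak pairing passes the integral to the limit. Density of $\mathcal D(\Omega)$ in $W_0^{s,p}(\Omega)$ and continuity of both sides of the Minty inequality as functions of $\varphi\in W_0^{s,p}(\Omega)$ then promote the inequality to all test functions, and Remark~\ref{rem} in the reverse direction concludes that $u$ satisfies \eqref{eq-15} with coefficient $\kappa$, so $(\kappa,u)\in\Xi$.

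The main obstacle is exactly the passage to the limit in the non-local nonlinearity of \eqref{eq-15}. The Minty trick is essential here: freezing the nonlinearity in the fixed test function reduces the problematic integral to a strong $\times$ weak duality pairing in $L^{p'}(\Omega\times\Omega)\times L^p(\Omega\times\Omega)$, which is precisely the situation already handled by Lemma~\ref{lem-con}. Without Minty one would instead need strong $L^p$-convergence of $U_{n,(p,s)}$, which is not at hand at this level of generality.
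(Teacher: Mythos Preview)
Your proposal is correct and follows essentially the same approach as the paper: extract limits by compactness, then pass to the limit in the Minty inequality \eqref{eq-minty} for smooth test functions via a strong--weak $L^{p'}\times L^p$ pairing, and finish by density. The only cosmetic difference is the factorization: the paper first proves the linear convergence of Lemma~\ref{lem-con} and then multiplies by the bounded weight $\Phi(x,y)=|\varphi(x)-\varphi(y)|^{p-2}/|x-y|^{s(p-2)}$, whereas you absorb this factor directly into $K_n$; both routes yield the same strong $L^{p'}$ convergence by dominated convergence and the argument concludes identically.
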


\begin{proof}
First, since $\{u_n\}_{n\in\NN}$ is bounded in $W_0^{s,p}(\Omega)$ and the continuous embedding $W_0^{s,p}(\Omega)\hookrightarrow L^2(\Omega)$ is compact, then after a (sub)sequence if necessary, there exists a $u\in W_0^{s,p}(\Omega)$ such that
\begin{align*}
u_n\rightharpoonup u\;\mbox{ in }\; W_0^{s,p}(\Omega) \;\mbox{ and }\; u_n\to u\;\mbox{ in }\;L^2(\Omega),\;\mbox{ as }\;n\to\infty.
\end{align*}
Next, since $\{\kappa_n\}_{n\in\NN}$ is bounded in $BV(\Omega)$, it follows from \cite[Corollary 3.39]{AFP} that after a (sub)sequence if necessary, there exists a $\kappa\in L^1(\Omega)$  such that
$\displaystyle
\kappa_n\to \kappa\;\mbox{ in }\; L^1(\Omega)$.
Since $\kappa_n\to \kappa$ in $L^1(\Omega)$ as $n\to\infty$ and $\sup_{n\in\NN}\int_{\Omega}|\nabla\kappa_n|<\infty$ (this follows from the fact that $\{\kappa_n\}_{n\in\NN}$ is bounded in $BV(\Omega)$), then by Remark \ref{rem-21}(b), this implies that $\kappa\in BV(\Omega)$ and
$\displaystyle
\kappa_n\weak\kappa\;\mbox{ in }\; BV(\Omega)\;\mbox{ as }\;n\to\infty$.
We have shown that, as $n\to\infty$,
\begin{align}\label{eq-33}
\kappa_n\weak\kappa\;\mbox{ in }\; BV(\Omega),\; u_n\rightharpoonup u\;\mbox{ in }\; W_0^{s,p}(\Omega) \;\mbox{ and }\; u_n\to u\;\mbox{ in }\;L^2(\Omega).
\end{align}
It remains to show that $(\kappa,u)\in\Xi$.
Since $\alpha\le \xi_1(x)\le\kappa_n(x)\le\xi_2(x)$ for a.e. $x\in\Omega$ and every $n\in\NN$, we have that $\kappa\in\mathfrak A_{ad}$. It also follows from Lemma \ref{lem-con} that
\begin{align}\label{Integra}
&\lim_{n\to\infty}\int_{\Omega}\int_{\Omega}\kappa_n(x-y)\frac{(u_n(x)-u_n(y))(\varphi(x)-\varphi(y))}{|x-y|^{N+2s}}\;dxdy\notag\\
=&\int_{\Omega}\int_{\Omega}\kappa(x-y)\frac{(u(x)-u(y))(\varphi(x)-\varphi(y))}{|x-y|^{N+2s}}\;dxdy,
\end{align}
for every $\varphi\in\mathcal D(\Omega)$. Let $\Phi:\Omega\times\Omega\to\RR$ be given by
$\displaystyle
\Phi(x,y):=\frac{|\varphi(x)-\varphi(y)|^{p-2}}{|x-y|^{s(p-2)}}$.
Note that for a.e. $x,y\in\Omega$, we have that
$|\Phi(x,y)|\le \|\varphi\|_{C^{0,s}(\bOm)}^{p-2}$.
Since $\Phi\in L^\infty(\Omega\times\Omega)$, if we multiply the functions under the integrals in both sides of \eqref{Integra} by $\Phi$, then we have the same convergence. 
This implies that for every $\varphi\in\mathcal D(\Omega)$,
\begin{align}\label{WM}
&\lim_{n\to\infty}\int_{\Omega}\int_{\Omega}\kappa_n(x-y)\frac{|\varphi(x)-\varphi(y)|^{p-2}}{|x-y|^{s(p-2)}}\frac{(\varphi(x)-\varphi(y))(u_n(x)-u_n(y))}{|x-y|^{N+2s}}\;dxdy\notag\\
=&\int_{\Omega}\int_{\Omega}\kappa(x-y)\frac{|\varphi(x)-\varphi(y)|^{p-2}}{|x-y|^{s(p-2)}}\frac{(\varphi(x)-\varphi(y))(u(x)-u(y))}{|x-y|^{N+2s}}\;dxdy\notag\\
=&\int_{\Omega}\int_{\Omega}\kappa(x-y)|\varphi(x)-\varphi(y)|^{p-2}\frac{(\varphi(x)-\varphi(y))(u(x)-u(y))}{|x-y|^{N+sp}}\;dxdy.
\end{align}

We show that  $(\kappa,u)$ is related by \eqref{eq-minty}. Since $(\kappa_n,u_n)$ satisfies \eqref{eq-minty} we have that

\begin{align}\label{eq-minty-2}
\mathcal E_{p,s}^{\kappa_n}(\varphi ,\varphi-u_n)\ge \int_{\Omega}f(\varphi-u_n)\;dx,
\end{align}
for every $\varphi\in\mathcal D(\Omega)$, where we recall that
$\displaystyle\mathcal E_{p,s}^{\kappa_n}(\varphi ,\varphi-u_n)=\mathcal E_{p,s}^{\kappa_n}(\varphi ,\varphi)-\mathcal E_{p,s}^{\kappa_n}(\varphi ,u_n)$.
It follows from \eqref{eq-33} that
\begin{align}\label{MW2}
\lim_{n\to\infty}\int_{\Omega}f(\varphi-u_n)\;dx= \int_{\Omega}f\varphi\;dx -\lim_{n\to\infty}\int_{\Omega}fu_n\;dx=\int_{\Omega}f(\varphi-u)\;dx.
\end{align}
Using \eqref{WM} and \eqref{MW2} we can pass to the limit in \eqref{eq-minty-2} as $n\to\infty$ and obtain that $(\kappa,u)$ is related by \eqref{eq-minty} for every $\varphi\in\mathcal D(\Omega)$. Finally, since $\mathcal D(\Omega)$ is dense in $W_0^{s,p}(\Omega)$, we have that \eqref{eq-minty} also holds for every $\varphi\in W_0^{s,p}(\Omega)$. Hence, $u\in W_0^{s,p}(\Omega)$ is a weak solution of  \eqref{ellip-eq}. This, together with $\kappa\in \mathfrak{A}_{ad}$ imply $(\kappa,u)\in\Xi$. 
\end{proof}

Now we are able to give the proof of our first main result.

\begin{proof}[\bf Proof of Theorem \ref{theo-24}]
Since the set $\Xi$ is nonempty and the cost functional is bounded from below on $\Xi$, it follows that there exists a minimizing sequence $(\kappa_n,u_n)\in\Xi$ to the problem \eqref{eq-min}, that is,
\begin{align*}
\inf_{(\kappa,u)\in\Xi}\mathbb I(\kappa,u)=\lim_{n\to\infty}\left[\frac 12\int_{\Omega}|u_n-\xi|^2\;dx+\int_{\Omega}|\nabla \kappa_n|\right]<\infty.
\end{align*}
This implies that $\{(\kappa_n,u_n)\}_{n\in \NN}$ is bounded in $BV(\Omega)\times W_0^{s,p}(\Omega)$. It follows from Theorem \ref{Theo-con} that after a (sub)sequence if necessary,  there exists $(\kappa_\star, u_\star)\in\Xi$ such that $\kappa_n\weak \kappa_\star$ in $BV(\Omega)$, $u_n\rightharpoonup u_\star$ in $W_0^{s,p}(\Omega)$ and $u_n\to u_\star$ in $L^2(\Omega)$, as $n\to\infty$. Therefore using also Remark \ref{rem-21}, we get that
\begin{align*}
\lim_{n\to\infty}\frac 12\int_{\Omega}|u_n-\xi|^2\;dx=\frac 12\int_{\Omega}|u_\star-\xi|^2\;dx\;\mbox{ and }\; \int_{\Omega}|\nabla \kappa_\star|\le\liminf_{n\to\infty}\int_{\Omega}|\nabla \kappa_n|.
\end{align*}
We have shown that 
$\displaystyle
\mathbb I(\kappa_\star,u_\star)\le \inf_{(\kappa,u)\in\Xi}\mathbb I(\kappa,u)$.
Thus $(\kappa_\star,u_\star)$ is a solution to \eqref{eq-min} and hence, a solution to our initial {\bf OCP} \eqref{Min}-\eqref{ellip-eq}. The proof is finished.
\end{proof}

\section{Proof of Theorem \ref{theo-ex-ep}}\label{proof-ex-ep}

Here also in order to be able to prove our theorem we need some preparation. For $\phi\in W_0^{s,2}(\Omega)$,  $p\in[2,\infty)$, $n\in\NN$ and $\varepsilon>0$ a small parameter, we shall use the following notation:

\begin{align}
\|\phi\|_{\varepsilon,n,\kappa,s,p}:=\left(\int_{\Omega}\int_{\Omega}\kappa(x-y)\Big[\varepsilon+\mathcal G_n(\phi,s)\Big]^{\frac{p-2}{2}}\frac{|\phi(x)-\phi(y)|^2}{|x-y|^{N+2s}}\;dxdy\right)^{\frac 1p}
\end{align}
where we recall that
$\mathcal G_n(\phi,s):=\mathcal F_n\left(\frac{|\phi(x)-\phi(y)|^2}{|x-y|^{2s}}\right).$
We notice that $\|\cdot\|_{\varepsilon,n,\kappa,s,p}$ is a quasi-norm but is not a norm unless $p=2$.

Let $\omega:\;\Omega\times\Omega\to \RR$ be the function and $\mu$ the measure on $\Omega\times\Omega$ given by
\begin{align}\label{mes-func}
\omega(x,y):=\frac{1}{|x-y|^{N+2s-2}}\;\;\mbox{ and }\; d\mu(x,y):=\omega(x,y)dxdy.
\end{align}
Let $x\in \Omega$ fixed and $R>0$ such that $\Omega\subset B(x,R)$. Using polar coordinates we get that there exists a constant $C>0$ (depending only on $N$ and $s$) such that
\begin{multline*}
\mu(\Omega\times\Omega):=\int_{\Omega}\int_{\Omega}\omega(x,y)dxdy=\int_{\Omega}\int_{\Omega}\frac{1}{|x-y|^{N+2s-2}}\;dxdy\\
\le \int_{\Omega}dx\int_{B(x,R)}\frac{1}{|x-y|^{N+2s-2}}\;dy
\le C|\Omega|\int_0^{R}\frac{1}{r^{2s-1}}\;dr=\frac{C|\Omega|}{2(1-s)}R^{2(1-s)}<\infty.
\end{multline*}

For $u\in W_0^{s,p}(\Omega)$ or $u\in W_0^{s,2}(\Omega)$ fixed and $n\in\NN$, we consider the level set
\begin{align*}
(\Omega\times\Omega)_n(u):=\left\{(x,y)\in\Omega\times\Omega:\; \frac{|u(x)-u(y)|}{|x-y|^{s}}>\sqrt{n^2+1}\right\}.
\end{align*}

We have the following result.

\begin{lemma}
The following assertions hold. 
\begin{enumerate}
\item There exists a constant $C>0$  such that for $u\in W_0^{s,2}(\Omega)$, 
\begin{align}\label{eq-48-2}
\Big|(\Omega\times\Omega)_n(u)\Big|\le \frac{C\alpha^{-1}}{n^p}\|u\|_{\varepsilon,n,\kappa,s,p}^{p}.
\end{align}

\item There exists a constant $C>0$  such that for $u\in W_0^{s,2}(\Omega)$, 
\begin{align}\label{eq-48}
\mu\Big((\Omega\times\Omega)_n(u)\Big)\le \frac{C\alpha^{-1}}{n^p}\|u\|_{\varepsilon,n,\kappa,s,p}^{p}.
\end{align}
\end{enumerate}
\end{lemma}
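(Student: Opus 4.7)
The two inequalities will both follow from a Chebyshev-type argument: restrict the integral defining $\|u\|^p_{\varepsilon,n,\kappa,s,p}$ to the super-level set $(\Omega\times\Omega)_n(u)$, then extract elementary pointwise lower bounds on the integrand there.

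The key structural observation is that by the definition of $\mathcal F_n$ in \eqref{eq-F}, whenever $|u(x)-u(y)|^2/|x-y|^{2s}>n^2+1$ one has $\mathcal G_n(u,s)=n^2+1$, so $(\varepsilon+\mathcal G_n(u,s))^{(p-2)/2}\ge n^{p-2}$ (here I use $p\ge 2$). Combined with the lower bound $\kappa(x-y)\ge\alpha$ coming from Assumption \ref{asump}(c) together with $\kappa\in\mathfrak A_{ad}$, and with the defining inequality $|u(x)-u(y)|^2>n^2|x-y|^{2s}$ of the level set, the integrand of $\|u\|^p_{\varepsilon,n,\kappa,s,p}$ is bounded below on $(\Omega\times\Omega)_n(u)$ by $\alpha\, n^p\, |x-y|^{-N}$. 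Restricting the integral to the level set therefore yields
$\|u\|^p_{\varepsilon,n,\kappa,s,p}\ge \alpha n^p \int_{(\Omega\times\Omega)_n(u)}|x-y|^{-N}\,dxdy$.

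Writing $D=\mathrm{diam}(\Omega)$, assertion (a) will follow from the trivial lower bound $|x-y|^{-N}\ge D^{-N}$ on $\Omega\times\Omega$, producing the constant $C=D^N$. Assertion (b) will follow by rewriting the weight of $\mu$ as $|x-y|^{-(N+2s-2)}=|x-y|^{2-2s}\cdot|x-y|^{-N}\le D^{2-2s}|x-y|^{-N}$, which is valid since $s<1$ and $|x-y|\le D$; combining this upper bound on the density of $\mu$ with the displayed integral inequality gives (b) with $C=D^{2-2s}$.

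There is no substantive obstacle here; the only care required is to combine the three lower bounds correctly into an overall factor of $n^p$ (an $n^{p-2}$ from $(\varepsilon+\mathcal G_n)^{(p-2)/2}$ and an $n^2$ from $|u(x)-u(y)|^2$), and to notice that $\mathcal F_n$ truly saturates at $n^2+1$ on the super-level set rather than merely at $n^2$.
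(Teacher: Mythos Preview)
Your argument is correct and in fact cleaner than the paper's. The paper proceeds by a H\"older/bootstrap argument: it writes
\[
\big|(\Omega\times\Omega)_n(u)\big|\le \frac{1}{\sqrt{n^2+1}}\int_{\Omega}\int_{\Omega}|x-y|^{N/2}\,\frac{|u(x)-u(y)|}{|x-y|^{N/2+s}}\,dxdy,
\]
applies Cauchy--Schwarz to produce a factor $\big|(\Omega\times\Omega)_n(u)\big|^{1/2}$ on the right, inserts $(\varepsilon+n^2+1)^{-(p-2)/4}$ and $\alpha^{-1/2}$ to rebuild the $\|\cdot\|_{\varepsilon,n,\kappa,s,p}$-quasi-norm, and then squares to obtain \eqref{eq-48-2}; part (b) repeats this with the weight $\omega$ in place of $1$. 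Your Chebyshev-type approach bypasses the H\"older step entirely by observing the pointwise lower bound of the integrand on the super-level set and reading off both (a) and (b) from the single inequality $\|u\|^p_{\varepsilon,n,\kappa,s,p}\ge \alpha n^p\int_{(\Omega\times\Omega)_n(u)}|x-y|^{-N}\,dxdy$ via the trivial diameter bounds $|x-y|^{-N}\ge D^{-N}$ and $|x-y|^{2-2s}\le D^{2-2s}$. This is shorter, gives explicit constants, and avoids the somewhat artificial appearance of the measure to the power $1/2$ on both sides. The paper's route, on the other hand, makes the role of the $W_0^{s,2}$ seminorm more visible at intermediate steps, which connects naturally with the later estimates \eqref{AB5}--\eqref{AB6}; your route trades that transparency for brevity.
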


\begin{proof}
Let $u\in W_0^{s,2}(\Omega)$ and $p\in [2,\infty)$. 

(a) Using the H\"older inequality and \eqref{eq-F} we get that there exists a constant $C>0$ such that
\begin{align}\label{mq}
\Big|(\Omega\times\Omega)_n(u)\Big|=&\int\int_{(\Omega\times\Omega)_n(u)} 1\;dxdy
\le\frac{1}{\sqrt{n^2+1}}\int_{\Omega}\int_{\Omega} |x-y|^{\frac N2}\frac{|u(x)-u(y)|}{|x-y|^{\frac N2+s}}\;dxdy\notag\\
\le &\frac{C}{n}\Big|(\Omega\times\Omega)_n(u)\Big|^{\frac{1}{2}}\left(\int_{\Omega}\int_{\Omega} \frac{|u(x)-u(y)|^2}{|x-y|^{N+2s}}\;dxdy\right)^{\frac 12}\notag\\
\le &\frac{C}{n}\left(\frac{1}{\varepsilon+n^2+1}\right)^{\frac{p-2}{4}}\Big|(\Omega\times\Omega)_n(u)\Big|^{\frac{1}{2}}\alpha^{-\frac 12}\notag\\
&\times\left(\int_{\Omega}\int_{\Omega}\kappa(x-y)\Big[\varepsilon+\mathcal G_n\left(u,s\right)\Big]^{\frac{p-2}{2}}\frac{|u(x)-u(y)|^2}{|x-y|^{N+2s}}\;dxdy\right)^{\frac 12}\notag\\
\le &\frac{C}{n^{\frac p2}}\frac{1}{\sqrt{\alpha}}\Big|(\Omega\times\Omega)_n(u)\Big|^{\frac{1}{2}}\|u\|_{\varepsilon,n,\kappa,s,p}^{\frac p2}.
\end{align}
We have shown \eqref{eq-48-2}.

(b) Let $\omega$ be the weighted function and $\mu$ the measure given in \eqref{mes-func}. Proceeding as in \eqref{mq} we get that there exists a constant $C>0$ such that
\begin{align*}
\mu\Big((\Omega\times\Omega)_n(u)\Big)=&\frac{1}{\sqrt{n^2+1}}\int\int_{(\Omega\times\Omega)_n(u)}\frac{\sqrt{n^2+1}}{|x-y|^{N+2s-2}}\;dxdy\\
\le &\frac{C}{n}\mu\Big((\Omega\times\Omega)_n(u)\Big)^{\frac{1}{2}}\left(\int_{\Omega}\int_{\Omega} \frac{|u(x)-u(y)|^2}{|x-y|^{N+2s}}\;dxdy\right)^{\frac 12} \\
\le &\frac{C}{n}\left(\frac{1}{\varepsilon+n^2+1}\right)^{\frac{p-2}{4}}\mu\Big((\Omega\times\Omega)_n(u)\Big)^{\frac{1}{2}}\alpha^{-\frac 12}\\
&\times\left(\int_{\Omega}\int_{\Omega}\kappa(x-y)\Big[\varepsilon+\mathcal G_n\left(u,s\right)\Big]^{\frac{p-2}{2}}\frac{|u(x)-u(y)|^2}{|x-y|^{N+2s}}\;dxdy\right)^{\frac 12}\\
\le &\frac{C}{n^{\frac p2}}\frac{1}{\sqrt{\alpha}}\mu\Big((\Omega\times\Omega)_n(u)\Big)^{\frac{1}{2}}\|u\|_{\varepsilon,n,\kappa,s,p}^{\frac p2}.
\end{align*}
We have shown \eqref{eq-48} and the proof is finished.
\end{proof}

\subsection{Well-posedness of the regularized problem}\label{s:regstate}

Next, we show the existence of solution to the regularized state equation \eqref{ellip-ep}.

\begin{proposition}\label{exist-ep}
For every $\varepsilon>0$, $n\in\NN$, $\kappa\in\mathfrak{A}_{ad}$ and $f\in L^2(\Omega)$, the system \eqref{ellip-ep} has a unique weak solution $u_{\varepsilon,n}\in W_0^{s,2}(\Omega)$. In addition, there exists a constant $C>0$ {\em(depending only on $\Omega$ and $s$)} such that
\begin{align}\label{sol-epsilon}
\varepsilon^{\frac{p-2}{2}}\|u_{\varepsilon,n}\|_{W_0^{s,2}(\Omega)}\le C\|f\|_{L^2(\Omega)}.
\end{align}
\end{proposition}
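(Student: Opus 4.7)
My approach is to apply the classical Browder--Minty theorem for monotone operators. The crucial structural feature of \eqref{ellip-ep} is that the regularized kernel $[\varepsilon + \mathcal{F}_n(t)]^{(p-2)/2}$ is strictly positive (bounded below by $\varepsilon^{(p-2)/2}$) and uniformly bounded above (by $(\varepsilon+n^2+1)^{(p-2)/2}$), so that the operator behaves qualitatively like the linear ($p=2$) case, with $\varepsilon$- and $n$-dependent constants.

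\emph{Setup and boundedness.} Define $\mathcal{A}: W_0^{s,2}(\Omega)\to W^{-s,2}(\Omega)$ by $\langle \mathcal{A} u,v\rangle := \mathbb{F}_{\varepsilon,n,p}^\kappa(u,v)$. Using the upper bound $[\varepsilon+\mathcal{G}_n(u,s)]^{(p-2)/2}\le (\varepsilon+n^2+1)^{(p-2)/2}$ from \eqref{eq-F}, together with $0\le \kappa\le \|\xi_2\|_{L^\infty(\Omega)}$ and the Cauchy--Schwarz inequality in the double integral, I would derive
\[
|\mathbb{F}_{\varepsilon,n,p}^\kappa(u,v)|\le \|u\|_{L^2(\Omega)}\|v\|_{L^2(\Omega)} + C(\varepsilon,n,\|\xi_2\|_{L^\infty})\,\|u\|_{W_0^{s,2}(\Omega)}\|v\|_{W_0^{s,2}(\Omega)},
\]
so $\mathcal{A}$ is well-defined and bounded.

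\emph{Strong monotonicity, hemicontinuity, coercivity.} For each fixed $h>0$, the scalar map $\phi(a):=[\varepsilon+\mathcal{F}_n(a^2/h^{2s})]^{(p-2)/2}\,a$ satisfies $\phi'(a)\ge [\varepsilon+\mathcal{F}_n(a^2/h^{2s})]^{(p-2)/2}\ge \varepsilon^{(p-2)/2}>0$ (using $p\ge 2$ and that $\mathcal{F}_n$ is non-decreasing), so $(\phi(a_1)-\phi(a_2))(a_1-a_2)\ge \varepsilon^{(p-2)/2}(a_1-a_2)^2$ by the mean value theorem. Applying this pointwise with $a_i=u_i(x)-u_i(y)$, $h=|x-y|$, multiplying by $\kappa(x-y)|x-y|^{-(N+2s)}$, integrating and adding the $L^2$ contribution, I obtain the strong monotonicity estimate
\[
\langle \mathcal{A} u_1-\mathcal{A} u_2,u_1-u_2\rangle \ge \|u_1-u_2\|_{L^2(\Omega)}^2 + C\,\alpha\,\varepsilon^{(p-2)/2}\,\|u_1-u_2\|_{W_0^{s,2}(\Omega)}^2,
\]
which yields strict monotonicity (hence uniqueness) and, by taking $u_2=0$ (noting that $\mathcal{A}0 = 0$), coercivity
\[
\langle \mathcal{A} u,u\rangle \ge \|u\|_{L^2(\Omega)}^2 + C\,\alpha\,\varepsilon^{(p-2)/2}\,\|u\|_{W_0^{s,2}(\Omega)}^2.
\]
Hemicontinuity of $t\mapsto \langle \mathcal{A}(u+tv),w\rangle$ follows from the dominated convergence theorem, since the integrands are dominated on compact $t$-intervals by the truncation together with Cauchy--Schwarz.

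\emph{Conclusion and a priori bound.} The Browder--Minty theorem then delivers a unique $u_{\varepsilon,n}\in W_0^{s,2}(\Omega)$ with $\mathcal{A} u_{\varepsilon,n}=f$, i.e.\ satisfying \eqref{solu-epsi} for every $v\in W_0^{s,2}(\Omega)$. Testing \eqref{solu-epsi} with $v=u_{\varepsilon,n}$, dropping the non-negative $L^2$ term on the left, and invoking the Sobolev--Poincar\'e embedding $W_0^{s,2}(\Omega)\hookrightarrow L^2(\Omega)$ from \eqref{sob-emb} to bound $\|u_{\varepsilon,n}\|_{L^2(\Omega)}\le C\|u_{\varepsilon,n}\|_{W_0^{s,2}(\Omega)}$, one arrives at
\[
\varepsilon^{(p-2)/2}\|u_{\varepsilon,n}\|_{W_0^{s,2}(\Omega)}^2 \le C\|f\|_{L^2(\Omega)}\|u_{\varepsilon,n}\|_{W_0^{s,2}(\Omega)},
\]
and dividing through gives \eqref{sol-epsilon}. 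The main delicate point is the strong monotonicity; expressing it at the pointwise scalar level via $\phi$ sidesteps any direct vector-valued manipulation of the regularized fractional kernel and reduces the estimate to a one-variable calculus fact.
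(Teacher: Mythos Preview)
Your proposal is correct and follows essentially the same route as the paper: the paper's proof is a brief sketch that reduces the existence and uniqueness to showing that $\mathbb F_{\varepsilon,n,p}^\kappa$ is hemicontinuous, strictly monotone and coercive (i.e.\ the Browder--Minty framework), and obtains \eqref{sol-epsilon} by testing \eqref{solu-epsi} with $v=u_{\varepsilon,n}$. Your scalar mean-value argument for strong monotonicity via $\phi(a)=[\varepsilon+\mathcal F_n(a^2/h^{2s})]^{(p-2)/2}a$ is a clean way to supply the details the paper defers to \cite{AW2017note}; note only that the non-decrease of $\mathcal F_n$ you invoke is implicit in \eqref{eq-F} rather than stated explicitly.
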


\begin{proof}
Here also, the proposition follows by showing that $\mathbb F_{\varepsilon,n,p}^\kappa(u,\cdot)\in W^{-s,2}(\Omega)$ for every fixed $u\in W_0^{s,2}(\Omega)$, and that $\mathbb F_{\varepsilon,n,p}^\kappa$ is hemi-continuous, strictly monotone and coercive. The estimates \eqref{sol-epsilon} follows by taking $v=u_{\varepsilon,n}$ as a test function in \eqref{solu-epsi}. For more details we refer to \cite[Proposition 2.7]{AW2017note}. The proof is finished.
\end{proof}

\begin{remark}[\bf The regularized state equation and Minty relation]\label{rem-45}
{\em
As in Remark \ref{rem} we have that $u_{\varepsilon,n}\in W_0^{s,2}(\Omega)$ satisfies \eqref{solu-epsi} if and only if the following {\bf Minty relation} holds, that is, for every $\varphi\in W_0^{s,2}(\Omega)$,
\begin{align}\label{eq-minty-ep}
\mathbb F_{\varepsilon,n,p}^\kappa(\varphi ,\varphi-u_{\varepsilon,n})\ge \int_{\Omega}f(\varphi-u_{\varepsilon,n})\;dx.
\end{align}
For more details see \cite[Remark 2.9]{AW2017note}.}
\end{remark}

\subsection{The regularized optimal control problem (ROCP)}\label{s:regOCP_exist}

We begin by introducing the set of admissible controls for the {\bf ROCP} \eqref{eq-27}-\eqref{eq-29}. That is,
\begin{align}\label{set-e-ep}
\Xi_{\varepsilon,n}=\Big\{(\kappa,u):\; \kappa\in\mathfrak{A}_{ad},\; u\in W_0^{s,2}(\Omega),\; (\kappa,u)\mbox{ are related by }\;\eqref{solu-epsi}\Big\} . 
\end{align}
It follows from Remark \ref{rem-45} that the set $\Xi_{\varepsilon,n}$ is nonempty for every $\varepsilon>0$ and $n\in\NN$. Therefore the {\bf ROCP} \eqref{eq-27}-\eqref{eq-29} can be rewritten as the minimization problem: 
\begin{align}\label{cont-ep-pro}
\min_{(\kappa,u)\in \Xi_{\varepsilon,n}}\mathbb I(\kappa,u).
\end{align}

Now we are ready to  give the proof of our second main result.

\begin{proof}[\bf Proof of Theorem \ref{theo-ex-ep}]
Since the set $\Xi_{\varepsilon,n}$ given in \eqref{set-e-ep} is nonempty, then we can take a minimizing sequence $\{(\kappa_k,u_k)\}_{k\in\NN}\subset \Xi_{\varepsilon,n}$.  As $\{\kappa_k\}_{k\in\NN}$ is bounded in $L^\infty(\Omega)$ and $0<\alpha\le \kappa_k(x)\le\xi_2(x)$ for a.e. $x\in\Omega$ and every $k\in\NN$, we have that there exists a constant $C>0$ (independent of $k$)  such that
\begin{align*}
\|\kappa_k\|_{BV(\Omega)}\le \|\xi_2\|_{L^1(\Omega)}+C.
\end{align*}
Using the lower boundedness of $\mathbb I$, the above estimate and \eqref{sol-epsilon}, we have that there exists a constant $C>0$ (independent of $k$) such that
\begin{align*}
\|\kappa_k\|_{BV(\Omega)}+\|u_k\|_{W_0^{s,2}(\Omega)}\le C\left(\|\xi_2\|_{L^1(\Omega)}+1+\varepsilon^{\frac{2-p}{2}}\|f\|_{L^2(\Omega)}\right).
\end{align*}
Thus $\{(\kappa_k,u_k)\}_{k\in\NN}$ is bounded in $BV(\Omega)\times W_0^{s,2}(\Omega)$. Therefore, proceeding as the proof of Theorem \ref{theo-24} we deduce the existence of a (sub)sequence still denoted by $\{(\kappa_k,u_k)\}_{k\in\NN}$, and a pair $(\kappa,u)\in \Xi_{\varepsilon,n}$ such that $\kappa_k\weak \kappa$ in $BV(\Omega)$, $u_k\rightharpoonup u$ in $W_0^{s,2}(\Omega)$ and $u_k\to u$ in $L^2(\Omega)$, as $k\to\infty$. Thus
$\mathbb I(\kappa,u)\le \liminf_{k\to\infty}\mathbb I(\kappa_k,u_k)$.
The proof of the theorem is finished.
\end{proof}

\subsection{Further a priori estimates for the regularized state}\label{s:reg_state_apriori}

Next we give further a priori estimates of weak solutions to the system \eqref{ellip-ep}. These results will be useful to show the convergence of solutions to the {\bf ROCP} in section~\ref{proof-23}.

\begin{proposition}\label{prop-46}
Let $\kappa\in\mathfrak{A}_{ad}$, $n\in\NN$ and $\varepsilon>0$ be given. Then there exists a constant $C>0$ such that for every $f\in L^2(\Omega)$ and $u\in W_0^{s,2}(\Omega)$,
\begin{align}\label{AB}
\left|\int_{\Omega}fu\;dx\right|\le C\|f\|_{L^2(\Omega)}\left[\alpha^{-\frac 1p}\|u\|_{\varepsilon,n,\kappa,s,p}+\alpha^{-\frac 12}\|u\|_{\varepsilon,n,\kappa,s,p}^{\frac p2}\right].
\end{align}
\end{proposition}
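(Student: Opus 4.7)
The plan is to use Cauchy--Schwarz together with a Poincar\'e--Sobolev bound on $\|u\|_{L^2(\Omega)}$. Cauchy--Schwarz gives $\left|\int_\Omega fu\,dx\right|\le \|f\|_{L^2(\Omega)}\|u\|_{L^2(\Omega)}$, so it suffices to bound $\|u\|_{L^2(\Omega)}$ by the quasi-norm $\|u\|_{\varepsilon,n,\kappa,s,p}$. Since $u\in W_0^{s,2}(\Omega)$ and $\Omega$ is bounded Lipschitz, the Sobolev embedding \eqref{sob-emb} applied with $q=p=2$ yields a constant $C_\Omega>0$ with
\begin{equation*}
\|u\|_{L^2(\Omega)}^2 \le C_\Omega \int_\Omega\int_\Omega \frac{|u(x)-u(y)|^2}{|x-y|^{N+2s}}\,dxdy.
\end{equation*}

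To control this Gagliardo-type seminorm by the quasi-norm I would use the elementary pointwise lower bounds $[\varepsilon+\mathcal G_n(u,s)]^{(p-2)/2}\ge \varepsilon^{(p-2)/2}$ (valid since $\mathcal G_n(u,s)\ge 0$ and $p\ge 2$) and $\kappa(x-y)\ge\alpha$ from \eqref{func-xi}, obtaining
\begin{equation*}
\alpha\,\varepsilon^{(p-2)/2}\int_\Omega\int_\Omega\frac{|u(x)-u(y)|^2}{|x-y|^{N+2s}}\,dxdy \le \|u\|_{\varepsilon,n,\kappa,s,p}^p.
\end{equation*}
Absorbing the $\varepsilon,n,p,s,\Omega$-dependent factors into a constant $C$, this gives $\|u\|_{L^2(\Omega)} \le C\,\alpha^{-1/2}\,\|u\|_{\varepsilon,n,\kappa,s,p}^{p/2}$. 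Combining with Cauchy--Schwarz and noting that $C\|f\|_{L^2(\Omega)}\alpha^{-1/p}\|u\|_{\varepsilon,n,\kappa,s,p}\ge 0$, the two-term bound \eqref{AB} follows.

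The main subtlety is that the two-term right-hand side of \eqref{AB} looks as though it should arise from a refined splitting of $\Omega\times\Omega$ into $(\Omega\times\Omega)_n(u)$ (where $\mathcal G_n=n^2+1$ gives the $L^2$-Gagliardo bound $\int_\Omega\int_\Omega\chi_{(\Omega\times\Omega)_n(u)}|u(x)-u(y)|^2/|x-y|^{N+2s}\,dxdy\le[\alpha(\varepsilon+n^2+1)^{(p-2)/2}]^{-1}\|u\|^p$) and its complement (where $\mathcal F_n(\tau)\ge\tau$ for $\tau\le n^2+1$ yields the sharper pointwise bound $[\varepsilon+\mathcal G_n]^{(p-2)/2}\ge(|u(x)-u(y)|^2/|x-y|^{2s})^{(p-2)/2}$ and hence the $L^p$-Gagliardo estimate $\int_\Omega\int_\Omega\chi_{(\Omega\times\Omega)\setminus(\Omega\times\Omega)_n(u)}|u(x)-u(y)|^p/|x-y|^{N+sp}\,dxdy\le\alpha^{-1}\|u\|^p$). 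Converting this $L^p$-Gagliardo bound on the complement into an $L^2$-Gagliardo bound by H\"older with conjugate exponents $p/2$ and $p/(p-2)$, however, produces a weight $|x-y|^{-N}$ whose integral diverges at the diagonal, so no truly linear-in-$\|u\|_{\varepsilon,n,\kappa,s,p}$ estimate is obtained; the term $\alpha^{-1/p}\|u\|_{\varepsilon,n,\kappa,s,p}$ is therefore added only as a trivial non-negative margin, and the actual content of \eqref{AB} is the $p/2$ estimate derived above.
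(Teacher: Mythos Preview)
Your argument does prove the inequality as literally stated, but only because you absorb a factor $\varepsilon^{-(p-2)/4}$ into the constant $C$. This makes $C$ blow up as $\varepsilon\downarrow 0$, and the whole purpose of Proposition~\ref{prop-46} is to feed into the next proposition, where \eqref{AB} is applied with $u=u_{\varepsilon,n}$ to derive the \emph{uniform} bound $\sup_{\varepsilon>0,\,n\in\NN}\|u_{\varepsilon,n}\|_{\varepsilon,n,\kappa,s,p}<\infty$. With your $\varepsilon$-dependent $C$ that conclusion fails, and the entire convergence analysis of Section~\ref{proof-23} collapses. So although the wording of the proposition is permissive, the proof you give does not deliver what is actually needed, and the first term $\alpha^{-1/p}\|u\|_{\varepsilon,n,\kappa,s,p}$ is not meant to be a ``trivial non-negative margin'': it carries genuine content once $C$ is uniform.

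You have in fact put your finger on the real obstruction in your second paragraph: splitting $\Omega\times\Omega$ into the level set and its complement and applying H\"older with exponents $p/2$ and $p/(p-2)$ to the $W_0^{s,2}$ seminorm on the complement produces the non-integrable weight $|x-y|^{-N}$. The paper's remedy is exactly the step you stop short of: introduce an auxiliary index $\tfrac12<t<s$ (with $t=s$ if $p=2$) and control $\|u\|_{L^2(\Omega)}$ by the $W_0^{t,2}$ seminorm instead. On the complement of $(\Omega\times\Omega)^n(u)$ the H\"older weight becomes $|x-y|^{-N+2p(s-t)/(p-2)}$, which \emph{is} integrable near the diagonal since $t<s$; using $\mathcal F_n(\tau)=\tau$ there yields the linear term $C\alpha^{-1/p}\|u\|_{\varepsilon,n,\kappa,s,p}$. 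On the level set one has $\mathcal G_n(u,s)\ge n^2\ge 1$, so the factor $[\varepsilon+\mathcal G_n(u,s)]^{(p-2)/2}\ge 1$ can be inserted for free, giving $C\alpha^{-1/2}\|u\|_{\varepsilon,n,\kappa,s,p}^{p/2}$. Both constants depend only on $\Omega,N,s,t,p$, not on $\varepsilon,n,\kappa$.
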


\begin{proof}
We associate with $u\in W_0^{s,2}(\Omega)$ the level set $(\Omega\times\Omega)^n(u)$ given by
\begin{align}\label{omn}
(\Omega\times\Omega)^n(u):=\left\{(x,y)\in\Omega\times\Omega:\; \frac{|u(x)-u(y)|}{|x-y|^s}>n\right\}.
\end{align}
Let $\frac 12<t\le  s<1$ and $p\in [2,\infty)$ with $t=s$ if $p=2$.
Then $ W_0^{s,2}(\Omega) \hookrightarrow  W_0^{t,2}(\Omega)$ (by Proposition~\ref{rem-sob-hol}) and hence, $u\in W_0^{t,2}(\Omega)$. Using \eqref{sob-emb} and \eqref{norm-26}, we get that there exists a constant $C>0$ such that
\begin{align}\label{AB1}
&\left|\int_{\Omega}fu\;dx\right|\le\|f\|_{L^2(\Omega)}\|u\|_{L^2(\Omega)}
\le C\|f\|_{L^2(\Omega)}\|u\|_{W_0^{t,2}(\Omega)}\notag\\
\le &C\|f\|_{L^2(\Omega)}\left[\left(\int\int_{(\Omega\times\Omega)\setminus(\Omega\times\Omega)^n(u) }\frac{|u(x)-u(y)|^2}{|x-y|^{N+2t}}\;dxdy\right)^{\frac 12}\right.\notag\\
&+\left.\left(\int\int_{(\Omega\times\Omega)^n(u)}\frac{|u(x)-u(y)|^2}{|x-y|^{N+2t}}\;dxdy\right)^{\frac 12}\right].
\end{align}
Using the H\"older inequality we get that there is a constant $C>0$ such that
\begin{align}\label{AB2}
&\left(\int\int_{(\Omega\times\Omega)\setminus(\Omega\times\Omega)^n(u) }\frac{|u(x)-u(y)|^2}{|x-y|^{N+2t}}\;dxdy\right)^{\frac 12}\notag\\
\le &\left(\int\int_{(\Omega\times\Omega)\setminus(\Omega\times\Omega)^n(u) }\left(\frac{1}{|x-y|^{N-\frac{2N}{p}+2(t-s)}}\right)^{\frac{p}{p-2}}\;dxdy\right)^{\frac{p-2}{2p}}\notag\\
&\times\left(\int\int_{(\Omega\times\Omega)\setminus(\Omega\times\Omega)^n(u)}\frac{|u(x)-u(y)|^p}{|x-y|^{N+sp}}\;dxdy\right)^{\frac 1p}\notag\\
\le &C\left(\int\int_{(\Omega\times\Omega)\setminus(\Omega\times\Omega)^n(u)}\frac{|u(x)-u(y)|^p}{|x-y|^{N+sp}}\;dxdy\right)^{\frac 1p}.
\end{align}
It follows from \eqref{AB2} that there exists a constant $C>0$ such that
\begin{align}\label{AB4}
&\left(\int\int_{(\Omega\times\Omega)\setminus(\Omega\times\Omega)^n(u) }\frac{|u(x)-u(y)|^2}{|x-y|^{N+2t}}\;dxdy\right)^{\frac 12}\\
&\le C\left(\int\int_{(\Omega\times\Omega)\setminus(\Omega\times\Omega)^n(u)}\left(\varepsilon +\frac{|u(x)-u(y)|^2}{|x-y|^{2s}}\right)^{\frac{p-2}{2}}\frac{|u(x)-u(y)|^2}{|x-y|^{N+2s}}\;dxdy\right)^{\frac 1p}.\notag
\end{align}
Since
$\displaystyle
\mathcal F_n\left(\frac{|u(x)-u(y)|^2}{|x-y|^{2s}}\right)=\frac{|u(x)-u(y)|^2}{|x-y|^{2s}}\;\;\mbox{ a.e. in }\; (\Omega\times\Omega)\setminus(\Omega\times\Omega)^n(u)$,
and $0<\alpha\le \kappa(x-y)$ for a.e. $x,y\in\Omega$, it follows from \eqref{AB4} that
\begin{align}\label{AB5}
&\left(\int\int_{(\Omega\times\Omega)\setminus(\Omega\times\Omega)^n(u) }\frac{|u(x)-u(y)|^2}{|x-y|^{N+2t}}\;dxdy\right)^{\frac 12}\notag\\
&\le C\alpha^{-\frac 1p}\left(\int\int_{(\Omega\times\Omega)\setminus(\Omega\times\Omega)^n(u)}\kappa(x-y)\Big[\varepsilon +\mathcal G_n\left(u,s\right)\Big]^{\frac{p-2}{2}}\frac{|u(x)-u(y)|^2}{|x-y|^{N+2s}}\;dxdy\right)^{\frac 1p}\notag\\
&\le C\alpha^{-\frac 1p}\|u\|_{\varepsilon,n,\kappa,s,p}.
\end{align}
Since
$\displaystyle
n^2\le \mathcal F_n\left(\frac{|u(x)-u(y)|^2}{|x-y|^{2s}}\right)\le n^2+1\;\;\mbox{ a.e. in }\; (\Omega\times\Omega)^n(u)$,
and $\frac 12<t\le s<1$, we have that there exists a constant $C>0$ such that
\begin{align}\label{AB6}
&\left(\int\int_{(\Omega\times\Omega)^n(u)}\frac{|u(x)-u(y)|^2}{|x-y|^{N+2t}}\;dxdy\right)^{\frac 12}\notag\\
\le &C\left(\int\int_{(\Omega\times\Omega)^n(u)}\frac{|u(x)-u(y)|^2}{|x-y|^{N+2s}}\;dxdy\right)^{\frac 12}\notag\\
\le &C\alpha^{-\frac 12}\left(\int\int_{(\Omega\times\Omega)^n(u)}\kappa(x-y)\Big[\varepsilon +\mathcal G_n\left(u,s\right)\Big]^{\frac{p-2}{2}}\frac{|u(x)-u(y)|^2}{|x-y|^{N+2s}}\;dxdy\right)^{\frac 12}\notag\\
\le& C\alpha^{-\frac 12}\|u\|_{\varepsilon,n,\kappa,s,p}^{\frac p2}.
\end{align}
Now \eqref{AB} follows from \eqref{AB1}, \eqref{AB5} and \eqref{AB6}. The proof is finished.
\end{proof}

\begin{proposition}
Let $\varepsilon>0$ and $n\in\NN$. Then for every $\kappa\in\mathfrak{A}_{ad}$ and $f\in L^2(\Omega)$, the sequence of weak solution $\{u_{\varepsilon,n}\}_{{\varepsilon>0, n\in\NN}}$ to the system \eqref{ellip-ep} is bounded with respect to the $\|\cdot\|_{\varepsilon,n,\kappa,s,p}$-quasi norm, that is,
\begin{align}\label{quasi-}
\sup_{\varepsilon>0,n\in\NN}\|u_{\varepsilon,n}\|_{\varepsilon,n,\kappa,s,p}<\infty.
\end{align}
\end{proposition}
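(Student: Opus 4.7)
The plan is to test the weak formulation \eqref{solu-epsi} with $v=u_{\varepsilon,n}$ itself. Since $\mathbb F_{\varepsilon,n,p}^\kappa(u,u)$ decomposes as $\|u\|_{L^2(\Omega)}^2$ plus $\tfrac{C_{N,p,s}}{2}\|u\|_{\varepsilon,n,\kappa,s,p}^p$ by the very definition of the quasi-norm in the display preceding this proposition, choosing $v=u_{\varepsilon,n}$ immediately yields the energy identity
\begin{equation*}
\|u_{\varepsilon,n}\|_{L^2(\Omega)}^2+\frac{C_{N,p,s}}{2}\|u_{\varepsilon,n}\|_{\varepsilon,n,\kappa,s,p}^p=\int_{\Omega}fu_{\varepsilon,n}\,dx.
\end{equation*}
Dropping the nonnegative $L^2$ term gives $\|u_{\varepsilon,n}\|_{\varepsilon,n,\kappa,s,p}^p\le C\bigl|\int_\Omega f u_{\varepsilon,n}\,dx\bigr|$.

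Next I would plug the right-hand side into Proposition \ref{prop-46}, which was just established for arbitrary $u\in W_0^{s,2}(\Omega)$. Writing $X:=\|u_{\varepsilon,n}\|_{\varepsilon,n,\kappa,s,p}$ for brevity, this delivers
\begin{equation*}
X^p\le C\|f\|_{L^2(\Omega)}\Bigl[\alpha^{-1/p}X+\alpha^{-1/2}X^{p/2}\Bigr],
\end{equation*}
with a constant $C>0$ independent of $\varepsilon$ and $n$ (the constant from Proposition \ref{prop-46} depends only on $\Omega$, $s$, $t$, $p$, but not on the regularization parameters).

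The conclusion then follows from Young's inequality. For $p=2$ the estimate is already linear in $X$ and the bound is immediate. For $p>2$, both exponents $1$ and $p/2$ are strictly less than $p$, so for any $\eta>0$ there exists $C_\eta>0$ (depending only on $\eta$, $p$, $\|f\|_{L^2(\Omega)}$, $\alpha$) with
\begin{equation*}
C\|f\|_{L^2(\Omega)}\alpha^{-1/p}X\le \eta X^p+C_\eta,\qquad C\|f\|_{L^2(\Omega)}\alpha^{-1/2}X^{p/2}\le \eta X^p+C_\eta.
\end{equation*}
Choosing $\eta=1/4$ and absorbing the two $\eta X^p$ contributions into the left-hand side yields $\tfrac12 X^p\le 2C_\eta$, hence $X\le C(\|f\|_{L^2(\Omega)},\alpha,p,s,\Omega)$, uniformly in $\varepsilon>0$ and $n\in\NN$, which is exactly \eqref{quasi-}.

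I do not expect a genuine obstacle here: the only subtle point is checking that the constants in Proposition \ref{prop-46} and in the Young absorption carry no hidden dependence on $\varepsilon$ or $n$, which is transparent from their proofs (the bound in Proposition \ref{prop-46} used only the pointwise lower bound $\kappa\ge\alpha$ and the structural property of $\mathcal F_n$ on each of the two level sets, with constants free of $\varepsilon,n$). The proposition is therefore essentially a corollary of the energy identity combined with Proposition \ref{prop-46} and a Young-type absorption argument.
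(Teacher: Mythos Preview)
Your proof is correct and follows essentially the same route as the paper: test \eqref{solu-epsi} with $v=u_{\varepsilon,n}$, drop the $L^2$ term, and apply Proposition~\ref{prop-46} to control $\int_\Omega f u_{\varepsilon,n}$. The only cosmetic difference is the closing step: where you absorb the lower-order powers $X$ and $X^{p/2}$ via Young's inequality, the paper instead sets $C_f:=C(\alpha^{-1/p}+\alpha^{-1/2})\|f\|_{L^2(\Omega)}$ and argues directly that $X^p\le C_f(X+X^{p/2})$ forces $X\le\max\{C_f^{2/p},C_f^{1/(p-1)}\}$ (by splitting into the cases $X\le 1$ and $X\ge 1$). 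Both finishes are elementary and equivalent; your observation that the constant in Proposition~\ref{prop-46} is free of $\varepsilon,n$ is exactly the point.
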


\begin{proof}
Using \eqref{solu-epsi} and \eqref{AB} we get that there is a constant $C>0$ such that
\begin{align}\label{B1}
&\|u_{\varepsilon,n}\|_{\varepsilon,n,\kappa,s,p}^p\le\mathbb F_{\varepsilon,n,p}^{\kappa}(u_{\varepsilon,n},u_{\varepsilon,n})=\int_{\Omega}fu_{\varepsilon,n}\;dx\notag\\
\le &C\|f\|_{L^2(\Omega)}\Big[\alpha^{-\frac 1p}\|u\|_{\varepsilon,n,\kappa,s,p}+\alpha^{-\frac 12}\|u\|_{\varepsilon,n,\kappa,s,p}^{\frac p2}\Big].
\end{align}
Let
$\displaystyle
C_f:=C\Big(\alpha^{-\frac 1p}+\alpha^{-\frac 12}\Big)\|f\|_{L^2(\Omega)}$.
It follows from \eqref{B1} that
\begin{align}\label{B2}
\|u_{\varepsilon,n}\|_{\varepsilon,n,\kappa,s,p}\le \max\left\{C_f^{\frac 2p},C_f^{\frac{1}{p-1}}\right\},\;\forall\;\varepsilon>0, \forall\;n\in\NN,\;\forall\; \kappa\in\mathfrak{A}_{ad}.
\end{align}
Now \eqref{quasi-} follows from \eqref{B2} and the proof is finished.
\end{proof}

We also mention that using \eqref{AB} and \eqref{B2} we get that
\begin{align}
\|u_{\varepsilon,n}\|_{L^2(\Omega)}\le \max\left\{C^2\|f\|_{L^2(\Omega)}, C^{\frac{p}{p-1}}\|f\|_{L^2(\Omega)}^{\frac{1}{p-1}}\right\},\;\forall\;\varepsilon>0, \;\forall\;n\in\NN,
\end{align}
where $C$ is the constant appearing in \eqref{AB}.

We conclude this section with the following remark.

\begin{remark}\label{rem-49}
{\em Let $\kappa\in\mathfrak{A}_{ad}$, $n\in\NN$, $\varepsilon>0$ and $u_{\varepsilon,n}\in W_0^{s,2}(\Omega)$ be the solution of \eqref{ellip-ep}. Let $(\Omega\times\Omega)^n(u_{\varepsilon,n})$ be given by \eqref{omn}. Let $p\in [2,\infty)$ and $\frac 12<t\le s<1$ with $t=s$ if $p=2$.
We notice that it follows from \eqref{AB5} and \eqref{AB6} that there exists a constant $C>0$ (depending on $\Om, N,s,t$ and $p$) such that 
\begin{align}\label{D1}
\|u_{\varepsilon,n}\|_{W_0^{t,2}(\Omega)}\le C\Big(\alpha^{-\frac 1p}\|u_{\varepsilon,n}\|_{\varepsilon,n,\kappa,s,p}+\alpha^{-\frac 12}\|u_{\varepsilon,n}\|_{\varepsilon,n,\kappa,s,p}^{\frac p2}\Big).
\end{align}
We do not know if \eqref{D1} holds with $W_0^{t,2}(\Omega)$ replaced by $W_0^{s,2}(\Omega)$ in the case $p>2$.
}
\end{remark}

\section{Proof of Theorem \ref{theorem-23}}\label{proof-23}

Before we give the proof of our last main result, i.e., Theorem \ref{theorem-23} we need some intermediate results.

\begin{lemma}\label{lem-51}
Let $\frac 12<t\le s<1$ and $p\in [2,\infty)$ with $t=s$ if $p=2$.
Let $\{\kappa_{\varepsilon,n}\}_{\varepsilon>0, n\in\NN}\subset\mathfrak{A}_{ad}$ be an arbitrary sequence of admissible control associated with the states $\{u_{\varepsilon,n}\}_{\varepsilon>0, n\in\NN}\subset W_0^{s,2}(\Omega)$. Then $\{u_{\varepsilon,n}\}_{\varepsilon>0, n\in\NN}$ is bounded in $W_0^{t,2}(\Omega)$. In addition, each cluster point $u$ of $\{u_{\varepsilon,n}\}_{\varepsilon>0, n\in\NN}$ with respect to the weak topology in $W_0^{t,2}(\Omega)$, belongs to $W_0^{s,p}(\Omega)$. 
\end{lemma}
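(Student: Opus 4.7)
The plan has two disjoint steps mirroring the two assertions of the lemma.

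The first assertion is essentially a bookkeeping exercise. By Remark \ref{rem-49} we have the estimate
\begin{align*}
\|u_{\varepsilon,n}\|_{W_0^{t,2}(\Omega)} \le C\bigl(\alpha^{-1/p}\|u_{\varepsilon,n}\|_{\varepsilon,n,\kappa_{\varepsilon,n},s,p} + \alpha^{-1/2}\|u_{\varepsilon,n}\|_{\varepsilon,n,\kappa_{\varepsilon,n},s,p}^{p/2}\bigr),
\end{align*}
and the quasi-norm is uniformly bounded in $(\varepsilon,n)$ by \eqref{quasi-}. This immediately gives uniform boundedness in $W_0^{t,2}(\Omega)$.

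For the second assertion the plan is the following. Let $u$ be a weak cluster point. Since $W_0^{t,2}(\Omega)$ embeds compactly into $L^2(\Omega)$, after passing to a subsequence we may assume $u_{\varepsilon,n}\rightharpoonup u$ in $W_0^{t,2}(\Omega)$, $u_{\varepsilon,n}\to u$ in $L^2(\Omega)$, and $u_{\varepsilon,n}\to u$ pointwise almost everywhere on $\Omega$. The core of the argument is to obtain a $W^{s,p}$-seminorm bound on $u$ via Fatou's lemma applied to the right cut-off. On the good set $(\Omega\times\Omega)\setminus(\Omega\times\Omega)^n(u_{\varepsilon,n})$ the definition \eqref{eq-F} of $\mathcal F_n$ forces $\mathcal G_n(u_{\varepsilon,n},s) = |u_{\varepsilon,n}(x)-u_{\varepsilon,n}(y)|^2/|x-y|^{2s}$, so dropping the $\varepsilon$ and using $\kappa_{\varepsilon,n}\ge \alpha$ yields the pointwise lower bound
\begin{align*}
\alpha\, \chi_{(\Omega\times\Omega)\setminus(\Omega\times\Omega)^n(u_{\varepsilon,n})} \frac{|u_{\varepsilon,n}(x)-u_{\varepsilon,n}(y)|^p}{|x-y|^{N+sp}} \le \chi_{(\Omega\times\Omega)\setminus(\Omega\times\Omega)^n(u_{\varepsilon,n})}\, \kappa_{\varepsilon,n}(x-y)\bigl[\varepsilon+\mathcal G_n(u_{\varepsilon,n},s)\bigr]^{(p-2)/2}\frac{|u_{\varepsilon,n}(x)-u_{\varepsilon,n}(y)|^2}{|x-y|^{N+2s}}.
\end{align*}
Integrating over $\Omega\times\Omega$ controls the right-hand side by $\|u_{\varepsilon,n}\|_{\varepsilon,n,\kappa_{\varepsilon,n},s,p}^p$, which is uniformly bounded by \eqref{quasi-}.

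It remains to pass to the limit. For almost every $(x,y)\in\Omega\times\Omega$ the value $|u(x)-u(y)|/|x-y|^s$ is finite and, by the a.e. convergence $u_{\varepsilon,n}\to u$, the same ratio for $u_{\varepsilon,n}$ converges to it; in particular the ratio eventually stays below $n$ as $n\to\infty$, so $\chi_{(\Omega\times\Omega)\setminus(\Omega\times\Omega)^n(u_{\varepsilon,n})}\to 1$ a.e. Fatou's lemma applied to the nonnegative integrand $\chi_{...}|u_{\varepsilon,n}(x)-u_{\varepsilon,n}(y)|^p/|x-y|^{N+sp}$ then delivers the finite bound
\begin{align*}
\int_\Omega\int_\Omega \frac{|u(x)-u(y)|^p}{|x-y|^{N+sp}}\,dxdy \le \frac{1}{\alpha}\,\liminf_{(\varepsilon,n)\to(0,\infty)} \|u_{\varepsilon,n}\|_{\varepsilon,n,\kappa_{\varepsilon,n},s,p}^p <\infty,
\end{align*}
i.e.\ $u$ has finite $W^{s,p}$-seminorm.

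The step I expect to be the main obstacle is upgrading this from $W^{s,p}(\Omega)$ membership to $W_0^{s,p}(\Omega)$ membership. The idea is to exploit the fact that each $u_{\varepsilon,n}$ lies in $W_0^{s,2}(\Omega)$, hence extends by zero to a function in $W^{s,2}(\RR^N)$ (thanks to the Lipschitz boundary and $s>\tfrac12$), and the limit $u$ inherits this zero extension from the $L^2$ convergence; combined with the finite seminorm bound and the characterization $W_0^{s,p}(\Omega)=\{v\in W^{s,p}(\Omega):v=0 \text{ on }\partial\Omega\}$ valid when $sp>1$ on Lipschitz domains (which follows from Assumption~\ref{asump}(b) since $p\ge 2$ and $s>1/2$), the vanishing trace of $u\in W_0^{t,2}(\Omega)$ suffices to place $u$ in $W_0^{s,p}(\Omega)$.
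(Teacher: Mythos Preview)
Your proof is correct, and for the second assertion it takes a genuinely different (and more elementary) route than the paper.

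For the boundedness in $W_0^{t,2}(\Omega)$ you do exactly what the paper does: combine the quasi-norm estimate \eqref{D1} with the uniform bound \eqref{quasi-}.

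For the $W_0^{s,p}$ membership of cluster points the paper works harder. It fixes a subsequence, introduces the nested unions $\mathbb B_k:=\bigcup_{j\ge k}(\Omega\times\Omega)_{n_j}(u_{\varepsilon_j,n_j})$, shows $|\mathbb B_k|\to 0$ via \eqref{eq-48-2}, proves that $\{U_{\varepsilon_j,n_j,(p,s)}\}_j$ is bounded in $L^p((\Omega\times\Omega)\setminus\mathbb B_k)$, identifies its weak $L^p$ limit as $U_{(p,s)}$ by rescaling from the known weak $L^2$ convergence of $U_{\varepsilon_j,n_j,(2,t)}$, and only then invokes weak lower semicontinuity of the $L^p$ norm together with $|\mathbb B_k|\to 0$ to bound the $W^{s,p}$ seminorm of $u$. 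Your argument bypasses all of this: after extracting pointwise a.e.\ convergence, the observation that $\chi_{(\Omega\times\Omega)\setminus(\Omega\times\Omega)^n(u_{\varepsilon,n})}\to 1$ a.e.\ (because the finite limit $|u(x)-u(y)|/|x-y|^s$ is eventually dominated by $n\to\infty$) lets Fatou's lemma do the job in one line. The paper's weak-$L^p$ machinery is not reused elsewhere in the proof of Lemma \ref{lem-51}, so nothing is lost by your shortcut. For the final upgrade from $W^{s,p}(\Omega)$ to $W_0^{s,p}(\Omega)$, both you and the paper rely on the same trace characterization on Lipschitz domains (the paper cites \cite[Corollary~1.5.2]{MP}); your formulation via the vanishing trace inherited from $u\in W_0^{t,2}(\Omega)$ with $t>\tfrac12$ and $sp>1$ is equivalent.
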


\begin{proof}
 Recall that the estimate \eqref{D1} in Remark \ref{rem-49} holds.
Now using \eqref{B2} we get from \eqref{D1} that $\{u_{\varepsilon,n}\}_{\varepsilon>0, n\in\NN}$ is bounded in $W_0^{t,2}(\Omega)$. Let  $\{u_{\varepsilon_k,n_k}\}_{k\in\NN}$ be a (sub)sequence and $u\in W_0^{t,2}(\Omega)$ be such that $u_{\varepsilon_k,n_k}\rightharpoonup u$ in $W_0^{t,2}(\Omega)$ and $u_{\varepsilon_k,n_k}\to u$ in $L^2(\Omega)$, as $k\to\infty$. Let $k\in\NN$ be fixed and set
\begin{align}
\mathbb B_k:=\bigcup_{j=k}^\infty(\Omega\times\Omega)_{n_j}(u_{\varepsilon_j,n_j}),
\end{align}
where we recall that 
\begin{align}\label{D2}
(\Omega\times\Omega)_{n_j}(u_{\varepsilon_j,n_j}):=\Big\{(x,y)\in\Omega\times\Omega:\; \frac{|u_{\varepsilon_j,n_j}(x)-u_{\varepsilon_j,n_j}(y)|}{|x-y|^{s}}>\sqrt{n_j^2+1}\Big\}.
\end{align}
Using \eqref{eq-48} and \eqref{B2} we get that
\begin{align*}
\Big|\mathbb B_k\Big|=&\alpha^{-1}\sum_{j=k}^\infty\frac{1}{n_j^p}\|u_{\varepsilon_j,n_j}\|_{\varepsilon_j,n_j,\kappa_{\varepsilon_j},s}^p\le \alpha^{-1}\sup_{j\in\NN}\|u_{\varepsilon_j,n_j}\|_{\varepsilon_j,n_j,\kappa_{\varepsilon_j},s}^p\sum_{j=k}^\infty\frac{1}{n_j^p}\\
\le &\alpha^{-1} \max\left\{C_f^{\frac 2p},C_f^{\frac{1}{p-1}}\right\}\sum_{j=k}^\infty\frac{1}{n_j^p}<\infty.
\end{align*}
Therefore,
\begin{align}\label{mea-zero}
\lim_{k\to\infty}\Big|\mathbb B_k\Big|=\Big|\limsup_{k\to\infty}\mathbb B_k\Big|=0.
\end{align}
Using \eqref{B2} again we get that for all $j\ge k$,
\begin{align}\label{D3}
&\int\int_{(\Omega\times\Omega)\setminus\mathbb B_k}\frac{|u_{\varepsilon_j,n_j}(x)-u_{\varepsilon_j,n_j}(y)|^p}{|x-y|^{N+sp}}\;dxdy\notag\\
\le &\frac{1}{\alpha}\int\int_{(\Omega\times\Omega)\setminus\mathbb B_k}\kappa_{\varepsilon_j,n_j}(x-y)\left[\varepsilon_j+\mathcal G_{n_j}\left(u_{\varepsilon_j,n_j},s\right)\right]^{\frac{p-2}{2}} \frac{|u_{\varepsilon_j,n_j}(x)-u_{\varepsilon_j,n_j}(y)|^2}{|x-y|^{N+2s}}\;dxdy\notag\\
\le &\alpha^{-1}\max\left\{C_f^{\frac 2p},C_f^{\frac{1}{p-1}}\right\}.
\end{align}
Let $U_{\varepsilon_j,n_j,(p,s)}, U_{(p,s)}:\Omega\times\Omega\to\RR$ be given as in \eqref{Up}.
It follows from \eqref{D3} that $\displaystyle\left\{U_{\varepsilon_j,n_j,(p,s)}\right\}_{j\in\NN}$ is bounded in $L^p((\Omega\times\Omega)\setminus\mathbb B_k)$. 
Since $u_{\varepsilon_j,n_j}\rightharpoonup u$ in $W_0^{t,2}(\Omega)$ as $j\to \infty$, then by Remark \ref{rem-1},
\begin{align}\label{con-t}
U_{\varepsilon_j,n_j,(2,t)}\rightharpoonup U_{(2,t)}\;\mbox{ in }\;L^2(\Omega\times\Omega)\;\mbox{ as }\;j\to\infty.
\end{align}
Let
$\displaystyle
\beta:=\frac N2-\frac Np+t-s$.
Since
$\displaystyle
U_{\varepsilon_j,n_j,(p,s)}(x,y)= |x-y|^{\beta} U_{\varepsilon_j,n_j,(2,t)}(x,y)$
and
$\displaystyle
U_{(p,s)}(x,y)=|x-y|^{\beta}U_{(2,t)}(x,y)$,
it follows from \eqref{con-t} that
\begin{align*}
\chi_{(\Omega\times\Omega)\setminus \mathbb B_k}U_{\varepsilon_j,n_j,(p,s)}\rightharpoonup U_{(p,s)}\;\mbox{ in }\;L^p(\Omega\times\Omega)\;\mbox{ as }\;j\to\infty.
\end{align*}
Using \eqref{mea-zero} and \eqref{D3} we get that
\begin{align}\label{p-est}
\int_{\Omega}\int_{\Omega}\frac{|u(x)-u(y)|^p}{|x-y|^{N+sp}}\;dxdy=&\lim_{k\to\infty}\int\int_{(\Omega\times\Omega)\setminus\mathbb B_k}\frac{|u(x)-u(y)|^p}{|x-y|^{N+sp}}\;dxdy\notag\\
\le &\lim_{k\to\infty}\liminf_{j\to\infty} \int\int_{(\Omega\times\Omega)\setminus\mathbb B_k} \frac{|u_{\varepsilon_j,n_j}(x)-u_{\varepsilon_j,n_j}(y)|^p}{|x-y|^{N+sp}}\;dxdy\notag\\
\le &\alpha^{-1}\max\left\{C_f^{\frac 2p},C_f^{\frac{1}{p-1}}\right\}<\infty.
\end{align}
Since $u\in W_0^{t,2}(\Omega)$ and by assumption $\Omega$ has a Lipsichitz continuous boundary, then proceeding as in \cite[Corollary 1.5.2 p.37]{MP}, we can conclude from \eqref{p-est} that $u\in W_0^{s,p}(\Omega)$. The proof of the lemma is finished.
\end{proof}

\begin{remark}
{\em We mention that we do not know if $\{u_{\varepsilon,n}\}_{\varepsilon>0, n\in\NN}$ given in Lemma \ref{lem-51} is bounded in $W_0^{s,2}(\Omega)$ if $p>2$. We just know that it is bounded in $W_0^{t,2}(\Omega)$ for $\frac 12<t< s<1$.  In fact,  by \eqref{sol-epsilon}, we have that for $\varepsilon>0$ fixed, $\{u_{\varepsilon,n}\}_{n\in\NN}$ is bounded in $W_0^{s,2}(\Omega)$. But for $n\in\NN$ fixed, we are not able to show that $\{u_{\varepsilon,n}\}_{\varepsilon>0}$ is bounded in $W_0^{s,2}(\Omega)$ if $p>2$.}
\end{remark}

\begin{lemma}\label{lem-52}
Let $\frac 12<t\le s<1$and $p\in [2,\infty)$ with $t=s$ if $p=2$.
Let $\{\varepsilon_k\}_{k\in\NN}$, $\{n_k\}_{k\in\NN}$ and $\{\kappa_k\}_{k\in\NN}\subset\mathfrak{A}_{ad}$ be sequences such that
\begin{align}
\varepsilon_k\to 0,\;\;n_k\to\infty\;\mbox{ and }\; \kappa_k\to\kappa\;\mbox{  in }\; L^1(\Omega)\;\mbox{ as }\;k\to\infty.
\end{align}
Let $u_{k}=u_{\varepsilon_k,n_k}(\kappa_k)$ and $u=u(\kappa)$ be the solutions of \eqref{ellip-ep} and \eqref{ellip-eq}, respectively. Let $(\Omega\times\Omega)_k(u_k)$ be given in \eqref{D2}. Let $U_{k,(p,s)}, U_{(p,s)}$  be given as in \eqref{Up}.
Then the following assertions hold.
\begin{align}
&u_k\to u\;\mbox{ in }\; \;W_0^{t,2}(\Omega)\;\mbox{ as }\; k\to\infty.\label{57}\\
&\chi_{(\Omega\times\Omega)\setminus (\Omega\times\Omega)_k(u_k)}U_{k,(p,s)}\to U_{(p,s)}\;\mbox{  in }\; L^p(\Omega\times\Omega) \;\mbox{ as }\; k\to\infty.\label{58}\\
&\lim_{k\to\infty}\int_{\Omega}\int_{\Omega}\kappa_k(x-y)\Big[\varepsilon_k +\mathcal G_{n_k}\left(u_{k},s\right)\Big]^{\frac{p-2}{2}}\frac{|u_{k}(x)-u_{k}(y)|^2}{|x-y|^{N+2s}}\;dxdy\notag\\
&=\int_{\Omega}\int_{\Omega}\kappa(x-y)\frac{|u(x)-u(y)|^p}{|x-y|^{N+sp}}\;dxdy,\label{59}
\end{align}
where we recall that $\mathcal G_{n_k}$ is given by \eqref{FUNC-G}.
\end{lemma}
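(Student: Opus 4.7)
The plan is to treat the three assertions in the order \eqref{59}, \eqref{58}, \eqref{57}, after a preliminary identification step; one may extract subsequences freely since each subsequential cluster point will be forced to be $u$ by uniqueness.

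By Lemma~\ref{lem-51}, a subsequence of $\{u_k\}$ satisfies $u_k\rightharpoonup\hat u$ weakly in $W_0^{t,2}(\Omega)$, $u_k\to\hat u$ strongly in $L^2(\Omega)$ (and, after further extraction, a.e.), with $\hat u\in W_0^{s,p}(\Omega)$. I identify $\hat u=u$ by passing to the limit in the Minty inequality \eqref{eq-minty-ep} for test functions $\varphi\in\mathcal D(\Omega)$: the smoothness of $\varphi$ makes $|\varphi(x)-\varphi(y)|/|x-y|^s$ uniformly bounded, so for $k$ large $\mathcal G_{n_k}(\varphi,s)=|\varphi(x)-\varphi(y)|^2/|x-y|^{2s}$ and the coefficient $\kappa_k(x-y)[\varepsilon_k+\mathcal G_{n_k}(\varphi,s)]^{(p-2)/2}$ is uniformly bounded. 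Rewriting the bilinear pairing against $u_k-\hat u$ as an $L^2$-$L^2$ pairing of the form $\int\!\!\int \Phi_k(x,y)\cdot\bigl[(u_k-\hat u)(x)-(u_k-\hat u)(y)\bigr]/|x-y|^{N/2+t}\,dxdy$ by absorbing $|x-y|^{-(s-t)}$ into $\Phi_k$, one shows $\Phi_k$ converges strongly in $L^2(\Omega\times\Omega)$ by dominated convergence (the dominant is integrable for $t\in(\max\{1/2,2s-1\},\,s]$) while the second factor converges weakly in $L^2(\Omega\times\Omega)$. The Minty/uniqueness argument (Remark~\ref{rem}, Proposition~\ref{prop-ex-sol}) then forces $\hat u=u$, so the full sequence converges. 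Testing \eqref{solu-epsi} with $v=u_k$, passing to the limit using $u_k\to u$ in $L^2(\Omega)$, and comparing with the energy identity for $u$ (testing \eqref{eq-15} with $\varphi=u$) yields \eqref{59}.

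For \eqref{58}, set $h_k$ to be the integrand on the left-hand side of \eqref{59} and $h:=\kappa(x-y)|U_{(p,s)}|^p$. Then $h_k, h\ge 0$, $h_k\to h$ pointwise a.e., and $\int\!\!\int h_k\,dxdy\to L:=\int\!\!\int h\,dxdy$ by \eqref{59}; Scheff\'e's lemma yields $h_k\to h$ in $L^1(\Omega\times\Omega)$. The uniform integrability built into this $L^1$ convergence, together with $|(\Omega\times\Omega)_k(u_k)|\to 0$ from \eqref{eq-48-2}, gives $J_k:=\int\!\!\int_{(\Omega\times\Omega)_k(u_k)} h_k\to 0$, hence $\int\!\!\int_{\mathcal C_k} h_k\to L$, where $\mathcal C_k:=(\Omega\times\Omega)\setminus(\Omega\times\Omega)_k(u_k)$. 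Since $(\varepsilon_k+a)^{(p-2)/2}\ge a^{(p-2)/2}$ for $p\ge 2$, we have $\kappa_k|U_{k,(p,s)}|^p\chi_{\mathcal C_k}\le h_k\chi_{\mathcal C_k}$, giving $\limsup\int\!\!\int_{\mathcal C_k}\kappa_k|U_{k,(p,s)}|^p\le L$; Fatou's lemma (using pointwise a.e.\ convergence $\kappa_k|U_{k,(p,s)}|^p\chi_{\mathcal C_k}\to\kappa|U_{(p,s)}|^p$) gives the matching $\liminf\ge L$, so $\int\!\!\int_{\mathcal C_k}\kappa_k|U_{k,(p,s)}|^p\to L$. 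Since $\kappa_k\ge\alpha$, the family $\{|U_{k,(p,s)}|^p\chi_{\mathcal C_k}\}$ is uniformly integrable, and an Egorov-type argument applied to $\kappa_k\to\kappa$ a.e.\ on $\Omega-\Omega$ eliminates the $\kappa_k$-vs.-$\kappa$ difference, yielding $\int\!\!\int_{\mathcal C_k}\kappa|U_{k,(p,s)}|^p\to\int\!\!\int\kappa|U_{(p,s)}|^p$. Combined with the weak convergence $\chi_{\mathcal C_k}U_{k,(p,s)}\rightharpoonup U_{(p,s)}$ in $L^p(\Omega\times\Omega)$ (from the argument in the proof of Lemma~\ref{lem-51}), the Radon-Riesz property of the uniformly convex weighted space $L^p(\kappa\,dxdy)$ yields \eqref{58}.

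Finally, \eqref{57} follows from \eqref{58} by splitting $\|u_k-u\|_{W_0^{t,2}(\Omega)}^2=\int\!\!\int_{\mathcal C_k}+\int\!\!\int_{(\Omega\times\Omega)_k(u_k)}$. On $\mathcal C_k$, H\"older's inequality with exponents $p/2$ and $p/(p-2)$ yields
\[
\int\!\!\int_{\mathcal C_k}\frac{|(u_k-u)(x)-(u_k-u)(y)|^2}{|x-y|^{N+2t}}\,dxdy\le C\,\|\chi_{\mathcal C_k}(U_{k,(p,s)}-U_{(p,s)})\|_{L^p(\Omega\times\Omega)}^2\longrightarrow 0
\]
by \eqref{58}, the auxiliary integral $\int\!\!\int_{\Omega\times\Omega}|x-y|^{2p(s-t)/(p-2)-N}\,dxdy$ being finite exactly when $s>t$ (the case $p=2,\,t=s$ is trivial). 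On the bad set, the triangle inequality $|(u_k-u)(x)-(u_k-u)(y)|^2\le 2(|u_k(x)-u_k(y)|^2+|u(x)-u(y)|^2)$, absolute continuity of the integral for $u\in W_0^{t,2}(\Omega)$, and the estimate
\[
\int\!\!\int_{(\Omega\times\Omega)_k(u_k)}\frac{|u_k(x)-u_k(y)|^2}{|x-y|^{N+2s}}\,dxdy\le \frac{J_k}{\alpha(n_k^2+1)^{(p-2)/2}}\longrightarrow 0
\]
complete the proof. The main obstacle is the identification $\hat u=u$ in the first step: the absence of a uniform $W_0^{s,2}(\Omega)$ bound on $u_k$ for $p>2$ forces the exponent splitting $t\in(\max\{1/2,2s-1\},\,s]$ to make the coefficient in the Minty pairing converge strongly in $L^2$ while preserving the weak $L^2$-convergence of the other factor.
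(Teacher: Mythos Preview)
Your proposal is correct, and the identification step and the energy-identity derivation of \eqref{59} match the paper's. The route to \eqref{58} is genuinely different. The paper first establishes the weak convergence $\chi_{\mathcal C_k}U_{k,(p,s)}\rightharpoonup U_{(p,s)}$ in $L^p$ by a direct testing argument (pairing against smooth $\varphi$ and showing that the contribution over $(\Omega\times\Omega)_k(u_k)$ vanishes via the measure bound \eqref{eq-48}), then derives weighted-norm convergence from the energy identities, and combines weak plus norm convergence. You instead start from \eqref{59}, apply Scheff\'e's lemma to get $h_k\to h$ in $L^1$, read off uniform integrability, and run Fatou/Egorov to reach $\int\kappa|U_{k,(p,s)}|^p\chi_{\mathcal C_k}\to\int\kappa|U_{(p,s)}|^p$. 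Your path is more self-contained: since you already have a.e.\ convergence of $\chi_{\mathcal C_k}U_{k,(p,s)}$ and norm convergence, Brezis--Lieb (a.e.\ plus norm $\Rightarrow$ strong in $L^p$) finishes \eqref{58} directly. This also lets you drop the appeal to ``the argument in the proof of Lemma~\ref{lem-51}'' for weak convergence, which as stated handles the fixed set $(\Omega\times\Omega)\setminus\mathbb B_m$ rather than the moving set $\mathcal C_k$ and would need to be redone along the lines of the paper's Step~2.

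One caveat: your identification step imposes the extra restriction $t>\max\{1/2,2s-1\}$, not present in the statement. Since $2s-1<s$ always, this is harmless in substance: carry out the identification with some auxiliary $t_0\in(\max\{1/2,2s-1\},s)$, then recover \eqref{57} for the given $t\le t_0$ via $W_0^{t_0,2}(\Omega)\hookrightarrow W_0^{t,2}(\Omega)$; but this reduction should be stated. The paper's corresponding passage (its (SS2), ``proceeding similarly'') is no more explicit and faces the same exponent issue.
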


\begin{proof}
We prove the lemma in several steps.

{\bf Step 1}. It follows from Lemma \ref{lem-51} that after a (sub)sequence if necessary, there exists $u\in W_0^{t,2}(\Omega)$ such that $u_k\rightharpoonup u$ in $W_0^{t,2}(\Omega)$ and $u_k\to u$ in $L^2(\Omega)$, as $k\to\infty$. In addition we have that $u\in W_0^{s,p}(\Omega)$. We show that $u$ is a weak solution of \eqref{ellip-eq}. Let $\varphi\in\mathcal D(\Omega)$ be fixed. Recall that $u_k\in W_0^{s,2}(\Omega)$ and satisfies the Minty relation
\begin{align}\label{minty-ep-2}
\mathbb F_{\varepsilon_k,n_k,p}^{\kappa_k}(\varphi ,\varphi-u_{k}):=\mathbb F_{k,p}^{\kappa_k}(\varphi ,\varphi-u_{k})\ge \int_{\Omega}f(\varphi-u_{k})\;dx.
\end{align}
Define the functions $\mathbb G_k, \mathbb G:\Omega\times\Omega\to\RR$ by
\begin{align*}
\mathbb G_k(x,y):=\left[\varepsilon_k+\mathcal F_k\left(\frac{|\varphi(x)-\varphi(y)|^2}{|x-y|^{2s}}\right)\right]^{\frac{p-2}{2}}\frac{\varphi(x)-\varphi(y)}{|x-y|^{\frac{N}{p'}+s}}
\end{align*}
and
\begin{align*}
\mathbb G(x,y):=\left(\frac{|\varphi(x)-\varphi(y)|}{|x-y|^s}\right)^{p-2}\frac{\varphi(x)-\varphi(y)}{|x-y|^{\frac{N}{p'}+s}}.
\end{align*}
Since $0\le \mathcal F_k(\tau)\le \tau +1$ for all $\tau\ge 0$ and $k\in\NN$, and $p'(p-1)=p$, we have that
\begin{align*}
&\|\mathbb G_k\|_{L^{p'}(\Omega\times\Omega)}^{p'}:=\int_{\Omega}\int_{\Omega}|\mathbb G_k|^{p'}\;dxdy\\
\le &\int_{\Omega}\int_{\Omega}\left(\varepsilon_k+1+\frac{|\varphi(x)-\varphi(y)|^2}{|x-y|^{2s}}\right)^{p'\frac{p-2}{2}}\frac{|\varphi(x)-\varphi(y)|^{p'}}{|x-y|^{N+sp'}}\;dxdy\\
\le &C\left(\int_{\Omega}\int_{\Omega}\left(\varepsilon_k+1\right)^{p'\frac{p-2}{2}}\frac{|\varphi(x)-\varphi(y)|^{p'}}{|x-y|^{N+sp'}}\;dxdy
+\int_{\Omega}\int_{\Omega}\frac{|\varphi(x)-\varphi(y)|^{p'(p-1)}}{|x-y|^{N+sp'(p-1)}}\;dxdy\right)\\
\le &C\left(\|\varphi\|_{W_0^{s,p'}(\Omega)}^{p'}+\|\varphi\|_{W_0^{s,p}(\Omega)}^p\right)<\infty,
\end{align*}
where we have also used the fact that there exists a constant $C>0$ such that
\begin{align*}
\left(\varepsilon_k+1+\frac{|\varphi(x)-\varphi(y)|^2}{|x-y|^{2s}}\right)^{p'\frac{p-2}{2}}\le C\left[ \left(\varepsilon_k+1\right)^{p'\frac{p-2}{2}} +\frac{|\varphi(x)-\varphi(y)|^{p'(p-2)}}{|x-y|^{sp'(p-2)}}\right],
\end{align*}
which follows from the well-known inequalities
\begin{equation}\label{JW2}
\begin{cases}
(a+b)^q\le 2^{q-1}(a^q+b^q),\;\;&\forall\; a, b\ge 0,\; q>1\\
 (a+b)^q\le a^q+b^q,\;\;&\forall\; a, b\ge 0,\; q\in (0,1].
 \end{cases}
\end{equation}
Thus $\mathbb G_k\in L^{p'}(\Omega\times\Omega)$. Similarly, we get that
$\mathbb G\in L^{p'}(\Omega\times\Omega)$. Using the Lebesgue Dominated Convergence Theorem, we get that
\begin{align}\label{S1}
\mathbb G_k\to \mathbb G\;\mbox{  in }\; L^{p'}(\Omega\times\Omega)\;\mbox{ as }\; k\to\infty.
\end{align}
Let $\mathbb K_{k,p}^\varphi, \mathbb K_p^\varphi:\Omega\times\Omega\to\RR$ be defined by
\begin{align*}
\mathbb K_{k,p}^\varphi(x,y):=\kappa_k(x-y)\frac{\varphi(x)-\varphi(y)}{|x-y|^{\frac{N}{p}+s}}\;\mbox{ and }\;\mathbb K_p^\varphi(x,y):= \kappa(x-y)\frac{\varphi(x)-\varphi(y)}{|x-y|^{\frac{N}{p}+s}}.
\end{align*}
Then
\begin{align*}
\int_{\Omega}\int_{\Omega}|\mathbb K_{k,p}^\varphi(x,y)|^p\;dxdy\le\|\kappa_k\|_{L^\infty(\Omega)}^p\|\varphi\|_{W_0^{s,p}(\Omega)}^p.
\end{align*}
Proceeding similarly, we get that $\mathbb K_{k,p}^\varphi, \mathbb K_{p}^\varphi\in L^p(\Omega\times\Omega)$.
Since $\{\kappa_k\}_{k\in\NN}$ is bounded in $L^\infty(\Omega)$, then using \eqref{EQ} and the Lebesgue Dominated Convergence Theorem, we get that
\begin{align}\label{S2}
\mathbb K_{k,p}^\varphi\to \mathbb K_p^\varphi\;\mbox{  in }\; L^p(\Omega\times\Omega) \;\mbox{ as }\; k\to\infty.
\end{align}
Using \eqref{S1} and \eqref{S2} we get that
\begin{align}\label{SS1}
&\lim_{k\to\infty}\int_{\Omega}\int_{\Omega}\kappa_k(x-y)\mathcal G_{\varepsilon_k,k,p}(\varphi)
\frac{(\varphi(x)-\varphi(y))(\varphi(x)-\varphi(y))}{|x-y|^{N+2s}}\;dxdy\notag\\
=&\lim_{k\to\infty}\int_{\Omega}\int_{\Omega}\mathbb G_k(x,y)\mathbb K_{k,p}^\varphi(x,y)\;dxdy
=\int_{\Omega}\int_{\Omega}\mathbb G(x,y)\mathbb K_p^\varphi(x,y)\;dxdy\notag\\
=&\int_{\Omega}\int_{\Omega}\kappa(x-y)\left(\frac{|\varphi(x)-\varphi(y)|}{|x-y|^s}\right)^{p-2}\frac{(\varphi(x)-\varphi(y))(\varphi(x)-\varphi(y))}{|x-y|^{N+2s}}\;dxdy.
\end{align}
Proceeding similarly and using $u_k\rightharpoonup u$ in $W_0^{t,2}(\Omega)$ as $k\to\infty$, we get that 
\begin{align}\label{SS2}
&\lim_{k\to\infty}\int_{\Omega}\int_{\Omega}\kappa_k(x-y)\mathcal G_{\varepsilon_k,k,p}(\varphi)
\frac{(\varphi(x)-\varphi(y))(u_k(x)-u_k(y))}{|x-y|^{N+2s}}\;dxdy\notag\\
=&\int_{\Omega}\int_{\Omega}\kappa(x-y)\left(\frac{|\varphi(x)-\varphi(y)|}{|x-y|^s}\right)^{p-2}\frac{(\varphi(x)-\varphi(y))(u(x)-u(y))}{|x-y|^{N+2s}}\;dxdy.
\end{align}
Since $u_k\to u$ in $L^2(\Om)$ as $k\to\infty$,  we have that 
\begin{align}\label{SS3}
\lim_{k\to\infty}\int_{\Omega}\varphi(\varphi-u_k)\;dx=\int_{\Omega}\varphi(\varphi-u)\;dx.
\end{align}
Combining \eqref{SS1}, \eqref{SS2} and \eqref{SS3}, we can pass to the limit in \eqref{minty-ep-2} to get that $u$ satisfies the Minty relation \eqref{eq-minty} for every $\varphi\in\mathcal D(\Omega)$. Since $\mathcal D(\Omega)$ is dense in $W_0^{s,p}(\Omega)$, we have that \eqref{eq-minty} also holds for every $\varphi\in W_0^{s,p}(\Omega)$. We have shown that $u$ is a weak solution to the system \eqref{ellip-eq}. From the uniqueness of solution to  \eqref{ellip-eq}, we can deduce that the whole sequence $\{u_k\}$ converges weakly to $u=u(\kappa)$ in $W_0^{t,2}(\Omega)$, and hence converges strongly to $u=u(\kappa)$ in $L^2(\Omega)$, as $k\to\infty$.

{\bf Step 2}. We show \eqref{58}. Using \eqref{B2} we get that for every $k\in\NN$,
\begin{align}\label{ESTS}
&\int_{\Omega}\int_{\Omega}\chi_{(\Omega\times\Omega)\setminus (\Omega\times\Omega)_k(u_k)}\frac{|u_{k}(x)-u_{k}(y)|^p}{|x-y|^{N+sp}}\;dxdy\notag\\
\le &\alpha^{-1}\int_{\Omega}\int_{\Omega}\chi_{(\Omega\times\Omega)\setminus (\Omega\times\Omega)_k(u_k)}\kappa_{k}(x-y)\Big[\varepsilon_k+\mathcal G_k(u_k,s)\Big]^{\frac{p-2}{2}} \frac{|u_k(x)-u_k(y)|^2}{|x-y|^{N+2s}}\;dxdy\notag\\
\le &\alpha^{-1}\|u_k\|_{\varepsilon_k,n_k,\kappa_k,s,p}\le C<\infty.
\end{align}
It follows from \eqref{ESTS} that $\{\chi_{(\Omega\times\Omega)\setminus (\Omega\times\Omega)_k(u_k)}U_{k,(p,s)}\}_{k\in\NN}$ is  bounded in $L^p(\Omega\times\Omega)$. Therefore, after a (sub)sequence if necessary, there exists a $G\in L^p(\Omega\times\Omega)$ such that 
\begin{align}\label{func-G}
\chi_{(\Omega\times\Omega)\setminus (\Omega\times\Omega)_k(u_k)}U_{k,(p,s)}\rightharpoonup G\;\mbox{ in }\;  L^p(\Omega\times\Omega)\;\mbox{ as }\; k\to\infty.
\end{align}

Let $K_{k,p'}, K_{p'}:\Omega\times\Omega\to\RR$ be given by
\begin{align*}
K_{k,p'}(x,y):=\frac{\kappa_k(x-y)}{|x-y|^{\frac{N}{p'}+s-1}}\;\mbox{ and } K_{p'}(x,y):=\frac{\kappa(x-y)}{|x-y|^{\frac{N}{p'}+s-1}}.
\end{align*}
Since $\kappa_k\in L^\infty(\Omega)$, we have that
\begin{align*}
\int_{\Omega}\int_{\Omega}|K_{k,p'}(x,y)|^{p'}\;dxdy\le \|\kappa_k\|_{L^\infty(\Omega)}^{p'}\int_{\Omega}\int_{\Omega}
\frac{1}{|x-y|^{N+sp'-p'}}\;dxdy<\infty.
\end{align*}
Proceeding similarly we get that $K_{k,p'}, K_{p'}\in L^{p'}(\Omega\times\Omega)$.
Using the Lebesgue Dominated Convergence Theorem, we get that $K_{k,p'}\to K_{p'}$  in $L^{p'}(\Omega\times\Omega)$ as $k\to\infty$. Let $\varphi\in \mathcal D(\Omega)$ and define $\Phi:\Omega\times\Omega\to\RR$ by
$\displaystyle
\Phi(x-y):=\frac{\varphi(x)-\varphi(y)}{|x-y|}$.
Then, for every $\varphi\in \mathcal D(\Omega)$ we have that
\begin{align}\label{mj0}
&\lim_{k\to\infty}\int_{(\Omega\times\Omega)\setminus (\Omega\times\Omega)_k(u_k)}U_{k,(p,s)}(x,y)\Phi(x,y)K_{k,p'}(x-y)\;dxdy \notag\\
=&\int_{\Omega}\int_{\Omega}G(x,y)K_{p'}(x,y)\frac{\varphi(x)-\varphi(y)}{|x-y|}\;dxdy,
\end{align}
where $G$ is the function mentioned in \eqref{func-G}. 
We notice that for every $\varphi\in \mathcal D(\Omega)$,
\begin{align}\label{mj00}
&\int_{\Omega}\int_{\Omega}\frac{(u(x)-u(y))(\varphi(x)-\varphi(y))}{|x-y|^{N+2s}}\kappa(x-y)\;dxdy\notag\\
=&\lim_{k\to\infty}\int_{(\Omega\times\Omega)\setminus (\Omega\times\Omega)_k(u_k)}\frac{(u_{k}(x)-u_{k}(y))(\varphi(x)-\varphi(y))}{|x-y|^{N+2s}}\kappa_k(x-y)\;dxdy\notag\\
&+\lim_{k\to\infty}\int_{(\Omega\times\Omega)_k(u_k)}\frac{(u_{k}(x)-u_{k}(y))(\varphi(x)-\varphi(y))}{|x-y|^{N+2s}}\kappa_k(x-y)\;dxdy.
\end{align}
Using \eqref{AB6}, \eqref{eq-48}, \eqref{B2} and the H\"older inequality
 we get that there exists a constant $C>0$ (depending only on $\Omega,N,p$ and $s$) such that
\begin{align}\label{mj1}
&\left|\int\int_{(\Omega\times\Omega)_k(u_k)}\kappa_k(x-y)\frac{(u_{k}(x)-u_{k}(y))(\varphi(x)-\varphi(y))}{|x-y|^{N+2s}}\;dxdy\right|\notag\\
\le &C\|\varphi\|_{C^1(\bOm)}\|\kappa_k\|_{L^\infty(\Omega)}\left(\int\int_{(\Omega\times\Omega)_k(u_k)}\frac{1}{|x-y|^{N+2s-2}}\;dxdy\right)^{\frac 12}\notag\\
&\times\left(\int\int_{(\Omega\times\Omega)_k(u_k)}\frac{|u_{k}(x)-u_{k}(y)|^2}{|x-y|^{N+2s}}\;dxdy\right)^{\frac 12}\notag\\
\le &C\|\xi_2\|_{L^\infty(\Omega)}\|\varphi\|_{C^1(\bOm)}\frac{1}{(\varepsilon_k+n_k^2+1)^{\frac{p-2}{4}}}\sqrt{\mu\Big((\Omega\times\Omega)_k(u_k)\Big)}\|u_k\|_{\varepsilon_k,n_k,\kappa_k,s}^{\frac{p}{2}}\notag\\
\le &C\|\xi_2\|_{L^\infty(\Omega)}\|\varphi\|_{C^1(\bOm)}\frac{1}{n_k^{p-1}}\;\longrightarrow 0\;\mbox{ as }\;k\to\infty.
\end{align}
Using \eqref{mj1} we get from \eqref{mj0} and \eqref{mj00} that for every $\varphi\in\mathcal D(\Omega)$,
\begin{align*}
&\int_{\Omega}\int_{\Omega}G(x,y)K_{p'}(x,y)\frac{\varphi(x)-\varphi(y)}{|x-y|}\;dxdy\\
=&\int_{\Omega}\int_{\Omega}\kappa(x-y)\frac{u(x)-u(y)}{|x-y|^{\frac{N}{p}+s}}\frac{\varphi(x)-\varphi(y)}{|x-y|^{\frac{N}{p'}+s}}\;dxdy.
\end{align*}
Thus
$\displaystyle
G(x,y)=U_{(p,s)}(x,y)=\frac{u(x)-u(y)}{|x-y|^{\frac{N}{p}+s}}\;\mbox{ for a.e. }\; (x,y)\in\Omega\times\Omega$.
We have shown that
\begin{align}\label{wek-c}
\chi_{(\Omega\times\Omega)\setminus (\Omega\times\Omega)_k(u_k)}U_{k,(p,s)}\rightharpoonup U_{(p,s)}\;\mbox{ in }\;L^p(\Omega\times\Omega)\;\mbox{ as } k\to\infty.
\end{align}
Using \eqref{wek-c} and the fact that $u_k$ is a solution of \eqref{ellip-ep} we get that for every $k\in\NN$,
\begin{align}\label{ener-1}
&\frac{C_{N,p,s}}{2}\int_{\Omega}\int_{\Omega}\kappa_k(x-y)\Big[\varepsilon_k +\mathcal G_{n_k}\left(u_{k},s\right)\Big]^{\frac{p-2}{2}}\frac{|u_{k}(x)-u_{k}(y)|^2}{|x-y|^{N+2s}}\;dxdy\notag\\
&+\int_{\Omega}|u_k|^2\;dx=\int_{\Omega}fu_k\;dx
\end{align}
and
\begin{align}\label{ener-2}
\frac{C_{N,p,s}}{2}\int_{\Omega}\int_{\Omega}\kappa(x-y)\frac{|u(x)-u(y)|^p}{|x-y|^{N+sp}}\;dxdy+\int_{\Omega}|u|^2\;dx=\int_{\Omega}fu\;dx.
\end{align}
It follows from \eqref{ener-1}, \eqref{ener-2} and the fact that $u_k\rightharpoonup u$ in $W_0^{t,2}(\Omega)$ (as $k\to\infty$) that
\begin{align}
&\lim_{k\to\infty}\frac{C_{N,p,s}}{2}\int_{\Omega}\int_{\Omega}\kappa_k(x-y)\Big[\varepsilon_k +\mathcal G_{n_k}\left(u_{k},s\right)\Big]^{\frac{p-2}{2}}\frac{|u_{k}(x)-u_{k}(y)|^2}{|x-y|^{N+2s}}\;dxdy\notag\\
=&\int_{\Omega}fu\;dx-\int_{\Omega}|u|^2\;dx=\frac{C_{N,p,s}}{2}\int_{\Omega}\int_{\Omega}\kappa(x-y)\frac{|u(x)-u(y)|^p}{|x-y|^{N+sp}}\;dxdy.
\end{align}
Moreover,
\begin{align}\label{weak}
&\int_{\Omega}\int_{\Omega}\kappa(x-y)\frac{|u(x)-u(y)|^p}{|x-y|^{N+sp}}\;dxdy\notag\\
=&\lim_{k\to\infty}\int_{\Omega}\int_{\Omega}\kappa_k(x-y)\Big[\varepsilon_k +\mathcal G_{n_k}\left(u_{k},s\right)\Big]^{\frac{p-2}{2}}\frac{|u_{k}(x)-u_{k}(y)|^2}{|x-y|^{N+2s}}\;dxdy\notag\\
\ge &\limsup_{k\to\infty}\int_{\Omega}\int_{\Omega}\chi_{(\Omega\times\Omega)\setminus (\Omega\times\Omega)_k(u_k)}\kappa_k(x-y)\frac{|u_{k}(x)-u_{k}(y)|^p}{|x-y|^{{N+sp}}}\;dxdy\notag\\
\ge &\liminf_{k\to\infty}\int_{\Omega}\int_{\Omega}\chi_{(\Omega\times\Omega)\setminus (\Omega\times\Omega)_k(u_k)}\kappa_k(x-y)\frac{|u_{k}(x)-u_{k}(y)|^p}{|x-y|^{{N+sp}}}\;dxdy.
\end{align}
Since $\{\kappa_k\}_{k\in\NN}$ is bounded in $L^\infty(\Omega)$ and $\kappa_k(x)\ge \alpha$ for a.e. $x\in\Omega$ and every $k\in\NN$, then using \eqref{EQ} we get that 
\begin{align}\label{wea-c}
\chi_{(\Omega\times\Omega)\setminus (\Omega\times\Omega)_k(u_k)}U_{k,(p,s)}\widetilde\kappa_k^{\frac 1p}\rightharpoonup U_{(p,s)}\widetilde\kappa^{\frac 1p}\;\mbox{  in }\; L^p(\Omega\times\Omega)\;\mbox{ as }\;k\to\infty,
\end{align}
where $\widetilde\kappa_k(x,y)=\kappa_k(x-y)$ and $\widetilde\kappa=\kappa(x-y)$.
Using \eqref{weak} we get that
\begin{align*}
&\int_{\Omega}\int_{\Omega}\kappa(x-y)\frac{|u_k(x)-u_k(y)|^p}{|x-y|^{N+sp}}\;dxdy\notag\\
\ge &\limsup_{k\to\infty}\int_{\Omega}\int_{\Omega}\kappa_k(x-y)\chi_{(\Omega\times\Omega)\setminus (\Omega\times\Omega)_k(u_k)}\frac{|u_k(x)-u_k(y)|^p}{|x-y|^{N+sp}}\;dxdy\notag\\
\ge &\liminf_{k\to\infty}\int_{\Omega}\int_{\Omega}\kappa_k(x-y)\chi_{(\Omega\times\Omega)\setminus (\Omega\times\Omega)_k(u_k)}\frac{|u_k(x)-u_k(y)|^p}{|x-y|^{N+sp}}\;dxdy\notag\\
= &\liminf_{k\to\infty}\left\|\chi_{(\Omega\times\Omega)\setminus (\Omega\times\Omega)_k(u_k)}U_{k,p}\widetilde\kappa_k^{\frac 1p}\right\|_{L^p(\Omega\times\Omega)}^p\\
=&\|U_p\widetilde\kappa^{\frac 1p}\|_{L^p(\Omega\times\Omega)}^p=\int_{\Omega}\int_{\Omega}\kappa(x-y)\frac{|u_k(x)-u_k(y)|^p}{|x-y|^{N+sp}}\;dxdy.
\end{align*}
We have shown that
\begin{align}\label{wea-c2}
\lim_{k\to\infty}\left\|\chi_{(\Omega\times\Omega)\setminus (\Omega\times\Omega)_k(u_k)}U_{k,(p,s)}\widetilde\kappa_k^{\frac 1p}\right\|_{L^p(\Omega\times\Omega)}^p=\|U_{(p,s)}\widetilde\kappa^{\frac 1p}\|_{L^p(\Omega\times\Omega)}^p.
\end{align}
The weak convergence \eqref{wea-c} and the norm convergence \eqref{wea-c2} imply the strong convergence. From this we get that $\chi_{(\Omega\times\Omega)\setminus (\Omega\times\Omega)_k(u_k)}U_{k,(p,s)}\to U_{(p,s)}$ in $L^p(\Omega\times\Omega)$ as $k\to\infty$ and we have shown \eqref{58}.

{\bf Step 3}. Now we show \eqref{59}. Notice that it follows from \eqref{58} and \eqref{weak} that
\begin{align}\label{eq-14}
\lim_{k\to\infty}\int\int_{ (\Omega\times\Omega)_k(u_k)}\kappa_k(x-y)\Big[\varepsilon_k +\mathcal G_{n_k}\left(u_{k},s\right)\Big]^{\frac{p-2}{2}}\frac{|u_{k}(x)-u_{k}(y)|^2}{|x-y|^{N+2s}}\;dxdy=0.
\end{align}
Next we claim that
\begin{align}\label{eq-151}
&\lim_{k\to\infty}\int\int_{(\Omega\times\Omega)\setminus (\Omega\times\Omega)_k(u_k)}\kappa_k(x-y)\Big[\varepsilon_k +\mathcal G_{n_k}\left(u_{k},s\right)\Big]^{\frac{p-2}{2}}\frac{|u_{k}(x)-u_{k}(y)|^2}{|x-y|^{N+2s}}\;dxdy\notag\\
=&\int_{\Omega}\int_{\Omega}\kappa(x-y)\frac{|u_k(x)-u_k(y)|^p}{|x-y|^{N+sp}}\;dxdy.
\end{align}
Indeed, it follows from \eqref{eq-F} that
\begin{align}\label{eq-25}
&\left[\varepsilon_k +\mathcal F_{n_k}\left(\frac{|u_{k}(x)-u_{k}(y)|^2}{|x-y|^{2s}}\right)\right]^{\frac{p-2}{2}}\frac{|u_{k}(x)-u_{k}(y)|^2}{|x-y|^{N+2s}}\chi_{(\Omega\times\Omega)\setminus (\Omega\times\Omega)_k(u_k)}\notag\\
\le &\left[\varepsilon_k +\delta+\left(\frac{|u_{k}(x)-u_{k}(y)|^2}{|x-y|^{2s}}\right)\right]^{\frac{p-2}{2}}\frac{|u_{k}(x)-u_{k}(y)|^2}{|x-y|^{N+2s}}\chi_{(\Omega\times\Omega)\setminus (\Omega\times\Omega)_k(u_k)}\notag\\
\le &2^{\frac{p-2}{2}}\left[\left(\varepsilon_k +\delta\right)^{\frac{p-2}{2}}\frac{|u_{k}(x)-u_{k}(y)|^2}{|x-y|^{2s}}+\frac{|u_{k}(x)-u_{k}(y)|^p}{|x-y|^{N+sp}}\right]\chi_{(\Omega\times\Omega)\setminus (\Omega\times\Omega)_k(u_k)}.
\end{align}
Using \eqref{58},  \eqref{eq-25} and the Lebesgue Dominated Convergence Theorem we get \eqref{eq-151} and the claim is proved. Now \eqref{59} follows from \eqref{eq-14} and \eqref{eq-151}.

{\bf Step 4}.  It remains to show that $u_k\to u$ in $W_0^{t,2}(\Omega)$ as $k\to\infty$. Recall that by Step 1, $u_k\rightharpoonup u$  in $W_0^{t,2}(\Omega)$ as $k\to\infty$. Applying \eqref{eq-14} we can deduce that
\begin{align}\label{eq-26}
&\lim_{k\to\infty}\int\int_{ (\Omega\times\Omega)_k(u_k)}\frac{|u_k(x)-u_k(y)|^2}{|x-y|^{N+2t}}\;dxdy\\
\le &\frac{1}{\alpha}\lim_{k\to\infty}\int\int_{ (\Omega\times\Omega)_k(u_k)}\kappa_k(x-y)\Big[\varepsilon_k +\mathcal G_{n_k}\left(u_{k},s\right)\Big]^{\frac{p-2}{2}}\frac{|u_{k}(x)-u_{k}(y)|^2}{|x-y|^{N+2s}}\;dxdy=0.\notag
\end{align}
Now combining \eqref{eq-26} and \eqref{58} we get that
\begin{align*}
U_{k,(2,t)}=\chi_{(\Omega\times\Omega)_k(u_k)}U_{k,(2,t)}+\chi_{(\Omega\times\Omega)\setminus (\Omega\times\Omega)_k(u_k)}U_{k,(2,t)}\longrightarrow U_{(2,t)}\;\mbox{ in }\; L^2(\Omega\times\Omega),
\end{align*}
as $k\to\infty$. By Remark \ref{rem-1}, this implies \eqref{57}. The proof of the lemma  is finished.
\end{proof}

Now we are ready to give the proof of our last main result.

\begin{proof}[\bf Proof of Theorem \ref{theorem-23}]
It follows from Lemma \ref{lem-51} and the fact that the set $\mathfrak A_{ad}$ is bounded in $BV(\Omega)$ that  $\{(\kappa_{\varepsilon,n}^\star,u_{\varepsilon,n}^\star\}_{\varepsilon>0,n\in\NN}$ is bounded in $BV(\Omega)\times W_0^{t,2}(\Omega)$. Hence, after a (sub)sequence if necessary, $\kappa_{\varepsilon,n}^\star\weak u_\star$ in $BV(\Omega)$,  $u_{\varepsilon,n}^\star\rightharpoonup u_\star$ in $W_0^{t,2}(\Omega)$ and $u_{\varepsilon,n}^\star\to u_\star$ in $L^2(\Omega)$,  as $(\varepsilon,n)\to (0,\infty)$. It follows from Remark \ref{rem-21} that
\begin{align}\label{eq-21}
\lim_{(\varepsilon,n)\to(0,\infty)}\kappa_{\varepsilon,n}^\star=\kappa_\star\;\mbox{  in }\; L^1(\Omega)\;\mbox{ and }\; \int_{\Omega}|\nabla u_\star|\le \liminf_{(\varepsilon,n)\to(0,\infty)}\int_{\Omega}|\nabla u_{\varepsilon,n}^\star|.
\end{align}
This implies that $\kappa_\star\in \mathfrak A_{ad}$. It also follows from Lemmas \ref{lem-51} and \ref{lem-52} that $u_\star$ is a weak solution to  \eqref{ellip-eq} with $\kappa=\kappa_\star$. Thus $(\kappa_\star,u_\star)\in\Xi$.  Now combining \eqref{57} and \eqref{eq-21} we get  \eqref{210}. The convergences in \eqref{212} and \eqref{213} follow from \eqref{58} and \eqref{59}, respectively. Next we claim that $(\kappa_\star,u_\star)$ is a solution of \eqref{eq-min}. Given $(\kappa,u)\in\Xi$, $n\in\NN$ and $\varepsilon>0$, we define $\kappa_{\varepsilon,n}=\kappa$ and $u_{\varepsilon,n}$ as the solution of \eqref{ellip-ep}. Hence, $(\kappa_{\varepsilon,n},u_{\varepsilon,n})\in\Xi_{\varepsilon,n}$. It follows from \eqref{57} and \eqref{59} that
\begin{align}\label{eq-22}
\mathbb I(\kappa,u)=\lim_{(\varepsilon,n)\to(0,\infty)}\mathbb I(\kappa,u_{\varepsilon,n})=\lim_{(\varepsilon,n)\to(0,\infty)}\mathbb I(\kappa_{\varepsilon,n},u_{\varepsilon,n}).
\end{align}
Now using \eqref{57}, \eqref{210}, \eqref{eq-21}, \eqref{eq-22} and the fact that  $u_{\varepsilon,n}$ is the solution of \eqref{ellip-ep}, we get that
\begin{align*}
\mathbb I(\kappa_\star,u_\star)\le \liminf_{(\varepsilon,n)\to(0,\infty)}\mathbb I(\kappa_{\varepsilon,n}^\star,u_{\varepsilon,n}^\star)\le&\limsup_{(\varepsilon,n)\to(0,\infty)}\mathbb I(\kappa_{\varepsilon,n}^\star,u_{\varepsilon,n}^\star)\\
\le& \limsup_{(\varepsilon,n)\to(0,\infty)}\mathbb I(\kappa_{\varepsilon,n},u_{\varepsilon,n})=\mathbb I(\kappa,u).
\end{align*}
Since $(\kappa,u)$ is arbitrary in $\Xi$, it follows that $(\kappa_\star,u_\star)$ is a solution of \eqref{eq-min}. Moreover, taking $(\kappa,u)=(\kappa_\star,u_\star)$ in the above inequality we get \eqref{214}. Finally \eqref{211} follows directly from \eqref{214} and the fact that $\kappa_{\varepsilon,n}\weak u_\star$ in $BV(\Omega)$ and $u_{\varepsilon,n}\to u_\star$ in $L^2(\Omega)$, as $(\varepsilon,n)\to (0,\infty)$. The proof of the theorem is finished.
\end{proof}

\section{Optimal control of fractional $p$-Laplacian for $0<s<1$}\label{s:ocpF}

We conclude the article by mentioning that all our results are also valid if one replaces $\mathcal L_{\Omega,p}^s(\kappa,\cdot)$ with  $(-\Delta)_{p}^s(\kappa,\cdot)$ given in \eqref{op-LR}
with $0<s<1$.
In that case one replaces the state system \eqref{ellip-eq} by \eqref{e-frac-intro}
and $W_0^{s,p}(\Omega)$ by the space
\begin{align*}
W_0^{s,p}(\bOm):=\Big\{u\in W^{s,p}(\RR^N):\;u=0\;\mbox{ on }\;\RR^N\setminus\Omega\Big\}.
\end{align*}
Under Assumption \ref{asump}(a), it has been shown in \cite[Theorem 6]{Val} that $\mathcal D(\Omega)$ is dense in $W_0^{s,p}(\bOm)$. Moreover, for every $0<s<1$,
\begin{align*}
\|u\|_{W_0^{s,p}(\bOm)}:=\left(\int_{\RR^N}\int_{\RR^N}\frac{|u(x)-u(y)|^p}{|x-y|^{N+sp}}\;dxdy\right)^{\frac 1p}
\end{align*}
defines an equivalent norm on $W_0^{s,p}(\bOm)$. In addition we have that $W_0^{s,p}(\bOm)=W_0^{s,p}(\Omega)$ with equivalent norms if $\frac 1p<s<1$ (see e.g. \cite[Section 1.1]{AW2017note}). 
With the above setting, a weak solution of \eqref{e-frac-intro} is defined to be a function $u\in W_0^{s,p}(\bOm)$ such that for every $\varphi\in W_0^{s,p}(\bOm)$ the equality
\begin{align*}
&\frac{C_{N,p,s}}{2}\int_{\RR^N}\int_{\RR^N}\kappa(x-y)|u(x)-u(y)|^{p-2}\frac{(u(x)-u(y))(\varphi(x)-\varphi(y))}{|x-y|^{N+sp}}\;dxdy\notag\\
& +\int_{\Omega}u\varphi\;dx=:\widetilde{\mathcal E}_{p,s}^\kappa(u,\varphi)=\int_{\Omega}f\varphi\;dx,
\end{align*}
holds. The associated regularized problem is given by
\begin{equation*}
\begin{cases}
(-\Delta)_{p,\varepsilon,n}^s(\kappa,u)+u=f\;\;&\mbox{ in }\;\Omega\\
u=0&\mbox{ on }\;\RR^N\setminus\Omega,
\end{cases}
\end{equation*}
with
\begin{align*}
(-\Delta)_{p,\varepsilon,n}^s(\kappa,u):=
C_{N,p,s}\mbox{P.V.}\int_{\RR^N}\kappa(x-y)\left[\varepsilon+\mathcal G_n\left(u,s\right)\right]^{\frac{p-2}{2}}\frac{u(x)-u(y)}{|x-y|^{N+2s}}\;dy,
\end{align*}
and a weak solution is defined to be a $u\in W_0^{s,2}(\bOm)$ such that for every $\varphi\in W_0^{s,2}(\bOm)$,
\begin{align*}
&\frac{C_{N,p,s}}{2}\int_{\RR^N}\int_{\RR^N}\kappa(x-y)\left[\varepsilon+\mathcal G_n\left(u,s\right)\right]^{\frac{p-2}{2}}\frac{(u(x)-u(y))(\varphi(x)-\varphi(y))}{|x-y|^{N+2s}}\;dxdy\notag\\
& +\int_{\Omega}u\varphi\;dx=:\widetilde{\mathbb F}_{\varepsilon,n,p}^\kappa(u,\varphi)=\int_{\Omega}f\varphi\;dx.
\end{align*}
All our results hold with very minor changes in the proofs, if one replaces the expressions of $\mathcal E_{p,s}^\kappa$ and $\mathbb F_{\varepsilon,n,p}^\kappa$ given in \eqref{form} and \eqref{func-FF}, respectively, by
$\widetilde{\mathcal E}_{p,s}^\kappa$
and $\widetilde{\mathbb F}_{\varepsilon,n,p}^\kappa(u,\varphi)$
for $u,\varphi\in W_0^{s,2}(\bOm)$, respectively. In this case $\xi_1,\xi_2\in L^\infty(\RR^N)$ and
\begin{align*}
\kappa\in \widetilde{\mathfrak{A}}_{ad}:=\Big\{\eta\in BV(\Omega):\;0<\alpha\le \xi_1(x)\le \eta(x)\le \xi_2(x)\;\mbox{ a.e. in }\;\RR^N\Big\}.
\end{align*}
In addition in this situation, all the results holds for every $0<s<1$.

\bibliographystyle{siam}
\bibliography{biblio}

\end{document}